\newtheorem{theorem}{\textbf{Theorem}}[section]
\newtheorem{lemma}{\textbf{Lemma}}[section]
\newtheorem{proposition}{\textbf{Proposition}}[section]
\newtheorem{remark}{\textbf{Remark}}[section]
\newtheorem{definition}{\textbf{Definition}}[section]
\numberwithin{equation}{section}
\title[The Equation and Boundary Condition of {C}ahn--{H}illiard Type]{Separation property and convergence to equilibrium 
\\ for the equation and dynamic boundary condition
\\ of {C}ahn--{H}illiard type with singular potential}
\author[T.\ Fukao]{Takeshi Fukao}
\address{Takeshi Fukao: Department of Mathematics, Faculty of Education,
Kyoto University of Education,
1~Fujinomori, Fukakusa, Fushimi-ku, Kyoto~612-8522 Japan.}
\email{fukao@kyokyo-u.ac.jp}
\author[H.\ Wu]{Hao Wu}
\address{Hao Wu: School of Mathematical Sciences and Shanghai Key Laboratory for Contemporary Applied Mathematics, Fudan University, Han Dan Road 220, Shanghai 200433, China;
Key Laboratory of Mathematics for Nonlinear Science (Fudan University),
Ministry of Education, Han Dan Road 220, Shanghai 200433, China.}
\email{haowufd@fudan.edu.cn}
\dedicatory{Dedicated to Professor Pierluigi Colli on the occasion of his 60th birthday with best wishes.}
\subjclass[2000]{35K55, 35B40, 74N20}
\keywords{{C}ahn--{H}illiard equation, dynamic boundary condition, singular potential, separation property, convergence to equilibrium}
\begin{document}

\begin{abstract}
We consider a class of {C}ahn--{H}illiard equation that 
models phase separation process of binary mixtures involving nontrivial boundary interactions in a 
bounded domain with non-permeable wall. The system is characterized by certain dynamic type boundary conditions 
and the total mass, in the bulk and on the boundary, is conserved for all time. 
For the case with physically relevant singular (e.g., logarithmic) potential, 
global regularity of weak solutions is established. In particular, when the spatial dimension is two, 
we show the instantaneous strict separation property such that for arbitrary positive time any weak solution stays away 
from the pure phases $\pm 1$, while in the three dimensional case, an eventual separation property for large 
time is obtained. As a consequence, we prove that every global weak solution converges to a single 
equilibrium as $t\to \infty$, by the usage of an extended {\L}ojasiewicz--Simon inequality.
\end{abstract}

\maketitle

\section{Introduction}
\setcounter{equation}{0}
In this paper, we consider the {C}ahn--{H}illiard equation:
\begin{equation}
	\begin{cases}
	\partial _t u-\Delta \mu =0, \\
	\mu = -\Delta u + F'(u),
	\end{cases}\quad {\rm in~} (0,\infty ) \times \Omega,
	\label{ogms1}
\end{equation}
subject to the following dynamic boundary conditions:
\begin{equation}
	\begin{cases}
	\partial _t u_{|_\Gamma } +\partial _{\boldsymbol{\nu }} \mu
	-\sigma \Delta_\Gamma \mu_{|_\Gamma } + \kappa \mu_{|_\Gamma } =0, \\
	\mu_{|_\Gamma }= \partial _{\boldsymbol{\nu }} u
	- \chi \Delta_\Gamma  u_{|_\Gamma } + F_\Gamma' (u_{|_\Gamma }),
	\end{cases} \quad {\rm on~} (0,\infty) \times \Gamma,
	\label{ogms2}
\end{equation}
and the initial condition
\begin{equation}
	u(0)=u_{0} \quad {\rm in~}\Omega.
	\label{ogms3}
\end{equation}
Here, $\Omega \subset \mathbb{R}^n$ ($n=2,3$) is a bounded domain with smooth boundary $\Gamma :=\partial \Omega $, $\boldsymbol{\nu }=\boldsymbol{\nu }(x)$ is the unit outer normal vector on $\Gamma$ and $\partial _{\boldsymbol{\nu }}$ denotes
the outward normal derivative on the boundary. The symbol $\Delta$ denotes the usual Laplace operator in $\Omega$ and $\Delta _{\Gamma }$ stands for the {L}aplace--{B}eltrami operator on $\Gamma$. $F$ and $F_\Gamma$ denote the bulk and boundary potentials, respectively. The constant $\kappa\geq 0$ is related to the mass exchange to the environment and $\sigma$, $\chi$ are some given nonnegative constants that account for possible boundary diffusion. When $\sigma,\,\chi>0$, system \eqref{ogms1}--\eqref{ogms3} can be regarded as equation and dynamic boundary condition of {C}ahn--{H}illiard type.

The {C}ahn--{H}illiard equation is a fundamental
diffuse interface model for multi-phase systems. It was first proposed in materials science to describe the pattern formation evolution of micro-structures during the phase separation process in binary alloys \cite{CH58,Nov} and has been extended to many areas of
scientific research, for instance, diblock
copolymer, image inpainting, and multiphase fluid flows. When the evolution is confined in a bounded domain $\Omega$, suitable boundary conditions should be taken into account for equation \eqref{ogms1}. Classical choices are the
homogeneous Neumann boundary conditions: 
\begin{equation*}
	\partial _{\boldsymbol{\nu }} \mu= \partial _{\boldsymbol{\nu }} u=0,
	 \quad {\rm on~} (0,\infty) \times \Gamma.
\end{equation*}
The corresponding initial boundary value problem of Cahn--Hilliard equation has been well-understood and rather complete results on its mathematical analysis (well-posedness, regularity of solutions and long-time behavior) have been obtained in the literature. We refer to, for instance,  \cite{AW07,EZ86,GGM17,KNP95,Kub12,RH99} and the references therein, for further details, see the recent review paper \cite{Mil17}. 

In recent studies, the so-called dynamic boundary conditions have been proposed in order to describe certain effective short-range interactions between the mixture and the solid wall (i.e., the boundary) \cite{Fis97,Kenz01}. In this case, the evolution of binary mixtures is characterized by the total free energy of the following typical form:
\begin{equation}
	E(u)
	:=
	 \int_{\Omega }^{} \left(\frac{1}{2}|\nabla u|^2+F(u) \right) dx
	+
	 \int_{\Gamma}^{} \left(\frac{\chi}{2}|\nabla_\Gamma u |^2 + F_\Gamma(u)\right) d\Gamma,
	\label{eneF}
\end{equation}
that is, the sum of a Ginzburg--Landau (bulk) free energy and of a surface free energy. The potential $F$ usually has a double-well structure and a thermodynamically relevant case is given by the so-called logarithmic potential:
\begin{equation}
F(r) = (1+r)\ln(1+r)+(1-r)\ln (1-r)-cr^2, \quad  {\rm for}~ r \in (-1,1),\label{log}
\end{equation}
where the constant $c > 0$ is large enough such that $F$ is nonconvex and has local minima  at $r=\pm r_*$, where $-1<-r_*<0<r_*<1$. This potential function is viewed as a singular one since its derivative $F':=\beta+\pi$ with 
\begin{equation}
\beta (r):=\ln \left( \frac{1+r}{1-r} \right),
\quad {\rm for~all~} r\in (-1,1),
\quad \pi (r)=-2cr,
\quad {\rm for~all~} r\in [-1,1],
\label{log1}
\end{equation}
satisfies $\lim_{r\to\pm 1} \beta(r){\rm sign} r= \infty$. 
In applications, it is often approximated by regular potentials with the prototype given by 
$F(r)=(1/4)(r^2-1)^2$ on the extended domain $\mathbb{R}$. Based on the energy functional \eqref{eneF}, different types of dynamic boundary conditions for the Cahn--Hilliard equation have been derived and analyzed in the literature, see for instance,  \cite{CFP06,CGS18,FYW17,GMS09,GS19,LW19,MZ05,PW06,WZ04}. In particular, concerning the dynamic boundary condition \eqref{ogms2} we are going to investigate in this paper, it was first introduced in \cite{Gal06} (with $\sigma=0$, $\chi>0$, $\kappa>0$, referred to as the Wentzell boundary condition) and then derived in a slightly different form by \cite{GMS11} (with $\sigma\geq 0$, $\chi\geq 0$, $\kappa=0$). This boundary condition describes bulk-surface phase separation process in a binary mixture confined to a bounded region with porous walls such that possible mass fluxes between the bulk and the boundary are allowed. The parameter $\kappa$ distinguishes the cases of permeable wall ($\kappa>0$) and non-permeable wall ($\kappa=0$), which is related to the property on conservation of total (i.e., bulk plus
boundary) mass such that 
\begin{equation}
	\frac{d}{dt}\left(\int_\Omega u dx+\int_\Gamma u d\Gamma\right)
	=-\kappa\int_\Gamma \mu d\Gamma,\quad \text{on } (0,\infty).
\label{ffmass}
\end{equation}
On the other hand, under condition \eqref{ogms2}, the system preserves the dissipation of total free energy $E(u)$ provided that $\sigma, \kappa\geq 0$:
\begin{equation}
	\frac{d}{dt} E(u) + \int_\Omega |\nabla \mu|^2 dx 
	+ \int_\Gamma \big(\sigma|\nabla_\Gamma \mu|^2 +\kappa |\mu|^2\big) d\Gamma=0, 
	\quad \text{on } (0,\infty).
	\label{ffdiss}
\end{equation}
The initial boundary value problem with regular potentials $F$ and $F_\Gamma$ has been studied extensively in the literature. When $\sigma=0$, $\chi>0$, existence, uniqueness and regularity of solutions were proved in \cite{Gal06,Gal06b,Kaj18} ($\kappa>0$) and \cite{Gal08,Kaj18} ($\kappa=0$) by different approaches; 
long-time behavior of global solutions were investigated in \cite{Gal06b, Wu07} ($\kappa>0$) 
and \cite{Gal08,GW08} ($\kappa=0$), 
proving the existence of global and exponential attractors as well as convergence of global solutions to single steady states as 
$t\to \infty$. Concerning the problem with general (singular) potentials and non-permeable wall ($\kappa=0$), existence and uniqueness of
global weak solutions and their long-time behavior were studied in \cite{CGM13} ($\sigma\geq 0$, $\chi\geq 0$) and \cite{GMS11} ($\sigma=\chi=1 $), see also \cite{CF15} in which the double
obstacle potential was handled and recent works \cite{CGSa18,CGS18,GS19} for the system with additional convection and viscous terms. Last but not least, we refer to \cite{CP14,FYW17} for numerical studies, to \cite{FY17} for the associated optimal boundary control problem, and to \cite{Mot18} for the existence of time periodic solutions.

In this paper, we consider the problem \eqref{ogms1}--\eqref{ogms3} with $\kappa=0$ and $\chi>0$ 
(taking $\chi=1$ without loss of generality), 
namely, imposing the evolution problem in a bounded domain with 
non-permeable wall and keeping the contributation of boundary diffusion in the free energy. 
In particular, we are interested in the regularity of global weak solutions and their long time behavior when the potential 
$F$ is allowed to be singular (e.g., \eqref{log}). 
As it has been pointed out in \cite{MZ10,CGM13}, 
the Cahn--Hilliard equation with dynamic boundary condition and singular potential is mathematically difficult, 
since the interplay between them may allow the solution to reach the pure states 
$\pm 1$ in regions with nonzero measure. To handle this, several attempts have been  made in the literature. 
In \cite{GMS11}, the authors obtained the 
regularity and long time behavior of solutions under certain growth restrictions on $F$, which unfortunately exclude the thermodynamically relevant logarithmic function. Later in \cite{CGM13}, the authors introduced a variational inequality (cf.\ also \cite{MZ10}) which enables them to prove the 
existence of finite-dimensional attractors for variational solutions, also in the case of logarithmic nonlinearities. 
We note that in those works, the boundary potential $F_\Gamma$ was assume to be a $C^2$ function with at most quadratic growth. On the other hand, under a different assumption that the boundary potential $F_\Gamma$ somehow dominates the bulk potential $F$  (cf.\ also  \cite{CGS14}), the authors of \cite{CF15} could prove the existence of global weak as well as strong solutions for a general class of nonlinearities. 

Below we choose to work with singular potentials in a setting similar to  \cite{CF15}. In this case, the bulk and boundary potentials in \eqref{eneF} are decomposed as 
\begin{equation*}
	F(r)=\widehat{\beta}(r)+\widehat{\pi}(r),\quad F_\Gamma(r)=\widehat{\beta}_\Gamma(r)+\widehat{\pi}_\Gamma(r),
\end{equation*}
where $\widehat{\beta},\ \widehat{\beta}_\Gamma: \mathbb{R}\to [0, \infty]$ are some convex, proper and l.s.c.\ functions and $\widehat{\pi},\ \widehat{\pi}_\Gamma: \mathbb{R}\to\mathbb{R}$ are of class $C^2$ with Lipschitz continuous first derivatives. The associated subdifferentials are denoted by $\beta=\partial \widehat{\beta}$, $\beta_\Gamma=\partial \widehat{\beta}_\Gamma$, respectively, which are maximal monotone operators with domains $D(\beta)$, $D(\beta_\Gamma)$. Under suitable assumptions on these nonlinearities (see (A1)--(A3) in Section 2 for details) that in particular are fulfilled by the physically relevant logarithmic potential \eqref{log}, the following results can be established for problem \eqref{ogms1}--\eqref{ogms3}.

\begin{itemize}
\item[(I)] \textit{Regularity of global weak solutions}. More precisely, we show the so-called strict
separation property provided that the initial datum is not a pure state $\pm 1$ 
(see Theorem \ref{septhm}): in both two and three dimensions, 
the global weak solution will be regular and stay uniformly away from $\pm 1$ after 
a sufficient large time; while in dimension two, 
the strict separation indeed happens instantaneously, 
with a uniform distance (with respect to the initial energy and total mass) 
for all $t \ge \eta$ ($\eta>0$ is an arbitrary but fixed constant). 
Our result gives a first example on the instantaneous separation property of 
weak solutions to the Cahn--Hilliard equation subject to dynamic boundary conditions in two dimension. 
It also extends the existing literature, 
for instance, \cite{MZ04, GGM17} for Cahn--Hilliard type equations with logarithmic potential as well as classical 
Neumann boundary conditions, and \cite{GMS11} for the case with 
dynamic boundary condition in which the eventually separation property 
was obtained under certain stronger assumptions on the bulk potential that excludes \eqref{log}. 
\item[(II)] \textit{Long time behavior}. Once the separation property is proven, 
the Cahn--Hilliard equation with singular potentials can be regarded as an equation with 
globally Lipschitz nonlinearities from a certain time on. 
Thus, we are able to study the long-time behavior of solutions just like the case 
with regular potentials \cite{GW08,Wu07}. 
More precisely, assuming in addition that the potentials $F$, $F_\Gamma$ are real analytic, 
we prove the convergence of any global weak solution to a single equilibrium as $t\to \infty$ (see Theorem \ref{convthm}). The same subject was treated in \cite[Theorem~3.22]{GMS11} by applying an extended {\L}ojasiewicz--{S}imon inequality.
However, the result therein was obtained only on a restricted situation for
$F$, excluding the logarithmic potential \eqref{log} (see \cite[Remark~3.8]{GMS11}). The proof of convergence to equilibrium relies on the celebrated {\L}ojasiewicz--Simon approach, see e.g., \cite{HJ99, Jen98} for a simplified illustration. It has been successfully applied to the study of Cahn--Hilliard type equations, for instance, we can refer to \cite{ASS18, CFP06, GW08, LW19, PW06, RH99, Wu07, WZ04} for the case of regular potentials and to   \cite{AW07, GGM17} for the case of the logarithmic potential \eqref{log}. See also \cite{LamWu, SW10, Wu07b}  for related results on the second order Allen--Cahn type equations under dynamic boundary conditions. 
\end{itemize}

The remaining part of this paper is organized as follows. In Section 2, we introduce the function spaces and necessary assumptions, state the main results of this paper. In Section 3, we derive some uniform estimates and a preliminary result on the regularity of global weak solutions. In Section 4, we prove our main result Theorem \ref{septhm} on the separation property. Section 5 is devoted to the proof of Theorem \ref{convthm} on the convergence to equilibrium. In the Appendix, we report some technical lemmas that have been used in this paper.

\section{Preliminaries and Main Results}
\setcounter{equation}{0}

In this section, we set up our target problem and state the main results. 

\subsection{Notation}
If $X$ is a (real) Banach space and $X^*$ is its topological dual,
then $\|\cdot\|_X$ indicates the norm of $X$ and $\langle \cdot ,\cdot \rangle_{X^*,X}$ denotes the corresponding duality product. We assume that $\Omega\subset \mathbb{R}^n$ ($n=2,3$) is a bounded domain with smooth boundary $\Gamma:=\partial \Omega$. 
Then we denote by $L^p(\Omega)$ and $L^p(\Gamma)$ $(p\geq 1)$ the standard Lebesgue spaces. When $p=2$, the inner products in the Hilbert spaces $L^2(\Omega)$ and $L^2(\Gamma)$ will be denoted by
$(\cdot, \cdot)_{L^2(\Omega)}$ and $(\cdot, \cdot)_{L^2(\Gamma)}$, respectively.
For $s\in \mathbb{R}$, $p\geq 1$, $W^{s,p}(\Omega)$ and $W^{s,p}(\Gamma)$
stand for the Sobolev spaces. If $p=2$, we denote $W^{s,p}(\Omega)=H^s(\Omega)$ and $W^{s,p}(\Gamma)=H^s(\Gamma)$.
For simplicity, we denote 
\begin{alignat*}{3}
H & :=L^{2}(\Omega ), \quad & V & :=H^1(\Omega ), \quad & W & :=H^2(\Omega ),\\
H_{\Gamma } &:=L^{2}(\Gamma ), \quad & V_\Gamma & :=H^1(\Gamma ), \quad &  W_\Gamma & :=H^2(\Gamma),
\end{alignat*}
with standard norms and inner products indicated above.
Next, we define the Hilbert spaces
\begin{align*}
\boldsymbol{H}  &:=H \times H_\Gamma,\\
\boldsymbol{V}  &:=\left\{ \boldsymbol{z} \in V \times V_\Gamma \ : \boldsymbol{z}=(z,z_\Gamma )\ \text{and}\ z_\Gamma =z_{|_\Gamma } \ {\rm a.e.\ on~} \Gamma \right\},\\
\boldsymbol{W} &:=(W \times W_\Gamma ) \cap \boldsymbol{V},
\end{align*}
endowed with natural inner products and related norms. 
Here, $z_{|_\Gamma }$ stands for the trace of the function $z$. 
Hereafter, we use a bold letter like $\boldsymbol{z}$ to denote the corresponding pair $(z, z_\Gamma)$. 
Let us restate that if $\boldsymbol{z} := (z, z_\Gamma) \in\boldsymbol{V}$ then 
$z_\Gamma$ means exactly the trace of $z$ on $\Gamma$, 
while if $\boldsymbol{z} := (z, z_\Gamma) \in \boldsymbol{H}$, 
then $z \in H$ and $z_\Gamma \in H_\Gamma$ are actually independent. 
From the definition, we easy see that $\boldsymbol{V}$ is dense in $\boldsymbol{H}$ and 
the chain of continuous embeddings holds 
$\boldsymbol{V}\subset V \subset \boldsymbol{H} \subset V^* \subset \boldsymbol{V}^*$, 
indeed, for each $z \in V$ we see that $(z,z_{|_\Gamma}) \in \boldsymbol{H}$, 
and for each $z^* \in V^*$ 
we can define $\boldsymbol{z}^* \in\boldsymbol{V}^*$ by $\langle \boldsymbol{z}^*, \boldsymbol{z} 
\rangle _{\boldsymbol{V}^*,\boldsymbol{V}}:=\langle z^*, z
\rangle _{V^*,V}$ for all $\boldsymbol{z}=(z,z_\Gamma) \in \boldsymbol{V}$. 
In what follows, we set for $\sigma\geq 0$
\begin{alignat*}{3}
	\mathcal{V}_\sigma & :=\boldsymbol{V}, \quad & \mathcal{W}_\sigma
	& := \boldsymbol{W},\quad & \text{if} \quad & \sigma>0, \\
	\mathcal{V}_\sigma & := V, \quad & \mathcal{W}_\sigma & :=W,\quad & \text{if} \quad & \sigma=0.
\end{alignat*}
For any $ \boldsymbol{z}^* \in \mathcal{V}_\sigma^*$, we define the generalized mean value by setting 
\begin{equation*} 
m( \boldsymbol{z}^*):= \frac{1}{|\Omega |+|\Gamma| }\langle \boldsymbol{z}^*, 
\boldsymbol{1}\rangle _{\mathcal{V}_{\sigma}^*,\mathcal{V}_{\sigma}},
\end{equation*}
where $|\Omega |:=\int_{\Omega }^{} 1 dx$ and $|\Gamma |:=\int_{\Gamma }^{} 1 d\Gamma $.
It leads to the usual mean value function when applied to elements of $\boldsymbol{H}$, i.e., 
$m :\boldsymbol{H} \to \mathbb{R}$ such that
\begin{equation*}
	m(\boldsymbol{z}):=\frac{1}{|\Omega |+|\Gamma| }
	\left( \int_{\Omega }^{}z dx
	+ \int_{\Gamma }^{} z_{\Gamma } d\Gamma \right),
	\quad \mbox{for all }\boldsymbol{z} \in \boldsymbol{H}.
\end{equation*}
Next, we introduce the subspace $\boldsymbol{H}_0$ of $\boldsymbol{H}$ by
\begin{equation*}
	\boldsymbol{H}_0:= \bigr\{ \boldsymbol{z} \in
	\boldsymbol{H} \ : \ m(\boldsymbol{z})=0 \bigr \},
\end{equation*}
and define $\boldsymbol{V}_0:=\boldsymbol{V} \cap \boldsymbol{H}_0$, 
$\boldsymbol{W}_0:=\boldsymbol{W}\cap \boldsymbol{H}_0$, respectively.
Then the dense and compact embedding 
$\boldsymbol{V}_ 0 \mathop{\hookrightarrow} \mathop{\hookrightarrow} \boldsymbol{H}_0$ holds (see, 
\cite[Lemma~B]{CF15}). 
Moreover, for $\sigma \geq 0$, we also set $$\mathcal{V}_{\sigma, 0} :=\mathcal{V}_{\sigma} \cap \boldsymbol{H}_0.$$ 
 The equivalent norms in $\boldsymbol{H}_0$, $\mathcal{V}_{\sigma, 0}$ are given by 
$\| \boldsymbol{z}\|_{\boldsymbol{H}_0}:=\|\boldsymbol{z}\|_{\boldsymbol{H}}$
for all
$\boldsymbol{z} \in \boldsymbol{H}_0$ and
\begin{equation*}
	\|\boldsymbol{z}\|_{\mathcal{V}_{\sigma, 0}}:=
	\left(
	\int_{\Omega }^{} |\nabla z|^2 dx
	+ \sigma
	\int_{\Gamma }^{} |\nabla _\Gamma z_\Gamma |^2  d\Gamma
	\right)^{1/2},
	\quad \mbox{for all }\boldsymbol{z} \in \mathcal{V}_{\sigma, 0},
\end{equation*}
thanks to the generalized {P}oincar\'e inequality \eqref{poin}.
For $\sigma \geq 0$, we define the following bilinear form:
\begin{equation*}
	a_\sigma( \boldsymbol{z}, \tilde{\boldsymbol{z}}) :=
	\int_{\Omega }^{} \nabla z \cdot \nabla \tilde{z} dx
	+ \sigma
	\int_{\Gamma }^{} \nabla_\Gamma  z_\Gamma  \cdot \nabla _\Gamma \tilde{z}_\Gamma
	d\Gamma,
	\quad {\rm for~all~}
	\boldsymbol{z},\tilde{\boldsymbol{z}} \in \mathcal{V}_\sigma,
\end{equation*}
and the duality mapping
$\boldsymbol{A}_\sigma: \mathcal{V}_{\sigma,0} \to \mathcal{V}_{\sigma,0}^*$ given by
\begin{equation*}
	\langle \boldsymbol{A}_\sigma
	\boldsymbol{z}, \tilde{\boldsymbol{z}}
	\rangle _{\mathcal{V}_{\sigma,0}^*, \mathcal{V}_{\sigma,0}}
	= a_\sigma( \boldsymbol{z}, \tilde{\boldsymbol{z}}),	
	\quad {\rm for~all~}
	\boldsymbol{z},\tilde{\boldsymbol{z}} \in \mathcal{V}_{\sigma,0}.
\end{equation*}
Then from \cite{Gal08} (for $\sigma=0$) and \cite{CF15} (for $\sigma>0$), we infer that the operator $\boldsymbol{A}_\sigma$ is a linear isomorphism and its inverse operator $\boldsymbol{A}_\sigma^{-1}: \mathcal{V}_{\sigma,0}^* \to \mathcal{V}_{\sigma,0}$ is compact on $\boldsymbol{H}_0$. Besides, we can define the inner product in $\mathcal{V}_{\sigma,0}^*$ by
\begin{equation}
	(\boldsymbol{z}^*,\tilde{\boldsymbol{z}}^*)_{\mathcal{V}_{\sigma,0}^*}
	:=\langle \boldsymbol{z}^*,
	\boldsymbol{A}_\sigma^{-1} \tilde{\boldsymbol{z}}^*
	\rangle _{\mathcal{V}_{\sigma,0}^*,\mathcal{V}_{\sigma,0}},
	\quad {\rm for~all~} \boldsymbol{z}^*, \tilde{\boldsymbol{z}}^*
	\in \mathcal{V}_{\sigma,0}^*.
	\label{inner}
\end{equation}
We denote by 
$\boldsymbol{P}:\boldsymbol{H} \to \boldsymbol{H}_0$ the projection 
\begin{equation*}
	\mbox{\boldmath $ P$} \mbox{\boldmath $ z$}:=
	\mbox{\boldmath $ z$}-m(\mbox{\boldmath $ z$}) \mbox{\boldmath $ 1$}
	= \bigl (z - m(\mbox{\boldmath $ z$}), z_\Gamma  - m(\mbox{\boldmath $ z$}) \bigr ),
	\quad {\rm for~all~} \boldsymbol{z} \in \boldsymbol{H}.
\end{equation*}
Then for any $\boldsymbol{z}^*\in \mathcal{V}_{\sigma,0}^*$, one can define 
$\langle \boldsymbol{z}^*, 
\boldsymbol{z} 
\rangle _{\mathcal{V}_{\sigma}^*,\mathcal{V}_{\sigma}} :=
\langle \boldsymbol{z}^*, 
\mbox{\boldmath $ P$}
\boldsymbol{z} \rangle _{\mathcal{V}_{\sigma,0}^*,\mathcal{V}_{\sigma,0}}$
for all $\boldsymbol{z} \in \mathcal{V}_{\sigma}$. 
Furthermore, one can identify the dual space $\mathcal{V}_{\sigma,0}^*$ by 
$\{\boldsymbol{z}^*\in \mathcal{V}_{\sigma}^*:\ \langle \boldsymbol{z}^*, 
\boldsymbol{1}\rangle _{\mathcal{V}_{\sigma}^*,\mathcal{V}_{\sigma}}=0\}$.

\subsection{The initial boundary value problem} 
Hereafter, for each $T\in (0, \infty)$
we denote 
$$
Q_T:=(0,T) \times \Omega,\ \ \Sigma _T:=(0,T) \times \Gamma,\ \  
Q :=(0,\infty)\times \Omega,\ \ \Sigma :=(0,\infty)\times \Gamma.
$$
The system \eqref{ogms1}--\eqref{ogms3} can be viewed as a sort of transmission problem that consists of a {C}ahn--{H}illiard equation in the bulk and another one on the boundary as a dynamic boundary condition (cf. \cite{MZ05}). To this end, introducing two new variables on $\Gamma$:
\begin{equation*}
	u_\Gamma=u_{|_\Gamma },\quad 	\mu_\Gamma=\mu_{|_\Gamma },
\end{equation*}
we can reformulate the target problem \eqref{ogms1}--\eqref{ogms3} as follows: find
$u, \mu :Q \to \mathbb{R}$ and $u_\Gamma, \mu_\Gamma:\Sigma \to \mathbb{R}$
satisfying
\begin{equation}
	\left\{
	\begin{aligned}
	&\partial _t u-\Delta \mu =0, & {\rm for~a.a.}\  (t,x)\in Q, \\
	&\mu = -\Delta u + \beta (u) + \pi (u),  & {\rm for~a.a.}\  (t,x)\in Q, \\
	&u_{|_\Gamma }=u_\Gamma,
	\quad \mu_{|_\Gamma }=\mu_\Gamma, & {\rm for~a.a.}\  (t,x)\in\Sigma, \\
	&\partial _t u_\Gamma +\partial_{\boldsymbol{\nu }} \mu
	-\sigma\Delta_\Gamma  \mu_\Gamma  =0, & {\rm for~a.a.}\ (t,x)\in \Sigma, \\
	&\mu_\Gamma  = \partial _{\boldsymbol{\nu }} u
	- \Delta_\Gamma  u_\Gamma  + \beta_\Gamma (u_\Gamma ) 
	+ \pi_\Gamma (u_\Gamma ), &  {\rm for~a.a.}\  (t,x)\in \Sigma, \\
	& u(0)=u_{0}, &  {\rm for~a.a.}\  x\in \Omega, \\
	& u_{\Gamma} (0)=u_{0\Gamma }, &  {\rm for~a.a.}\  x\in\Gamma.
	\end{aligned}
	\right.
	\label{GMS}
\end{equation}
In this manner, the original Cahn--Hilliard equation \eqref{ogms1} subject to those nontrivial boundary conditions \eqref{ogms2} can be viewed as a bulk-surface coupled system such that the bulk unknown variables $(u, \mu)$ now satisfy (standard) nonhomogeneous Dirichlet boundary conditions that are determined through a surface evolution system for the boundary variables $(u_\Gamma, \mu_\Gamma)$.
\smallskip    

Next, we present our basic hypotheses on the nonlinear terms and initial data.
\begin{itemize}
 \item[(A1)] $\beta, \beta_\Gamma \in C^1(-1,1)$ are monotone increasing functions with
\begin{alignat*}{2}
	\lim _{r \searrow -1} \beta (r) & =-\infty,\quad & \lim _{r \searrow -1} \beta_\Gamma (r) & =-\infty,\\
	\lim _{r \nearrow 1} \beta (r) & =\infty, \quad & \lim _{r \nearrow 1} \beta_\Gamma (r) & =\infty.
\end{alignat*}
Their primitive denoted by 
$\widehat{\beta }$, $\widehat{\beta }_\Gamma$, respectively, satisfy 
$\widehat{\beta }$, $\widehat{\beta }_\Gamma\in C^0([-1,1])\cap C^2(-1,1)$. 
The derivatives $\beta'$, $\beta_\Gamma'$ are convex and 
\begin{equation*}
	\beta'(r)\geq \gamma,\quad \beta_\Gamma'(r)\geq \gamma,\quad \text{for all}\ r\in (-1,1).
\end{equation*}
for some positive constant $\gamma>0$. 
Without loss of generality, we set 
$\widehat {\beta} (0)=\widehat {\beta}_\Gamma (0)=\beta(0)=\beta_\Gamma(0)=0$ 
and  make the extension $\widehat{\beta}(r)=\infty$, $\widehat{\beta}_\Gamma(r)=\infty$ for $|r|>1$. 
 \item[(A2)] There exist positive constants $c_1$, $c_2$ such that
\begin{equation*}
	\bigl| \beta(r) \bigr| 
	\leq c_1
	\bigl| 
	\beta_\Gamma(r) 
	\bigr|+c_2, \quad \text{for all}\ r\in (-1,1).
\end{equation*}
 \item[(A3)] $\pi, \pi_\Gamma \in W^{1,\infty }(\mathbb{R})$ such that 
 \begin{equation*}
	 \bigl| 
	 \pi'(r) 
	 \bigr|
	 \leq L,\quad 
	 \bigl| 
	 \pi'_\Gamma(r) 
	 \bigr|
	 \leq L,\quad \text{for all }r\in \mathbb{R},
\end{equation*}
 with $L>0$ being a certain given constant.
 \item[(A4)] $\boldsymbol{u}_0:=(u_0,u_{0\Gamma }) \in \boldsymbol{V}$, $m(\boldsymbol{u}_0) =m_0$ for some constant $m_0\in (-1,1)$ and the compatibility conditions
$\widehat{\beta }(u_0) \in L^1(\Omega )$, $\widehat{\beta }_\Gamma(u_{0\Gamma }) \in L^1(\Gamma )$ hold.
\end{itemize}
\begin{remark}
The physically relevant logarithmic potential with Lipschitz perturbations \eqref{log}, serves as a typical example that satisfies assumptions $\mathrm{(A1)}$--$\mathrm{(A3)}$. The assumption $m_0 \in (-1,1)$ in {\rm (A4)} indicates that the initial datum is not allowed to be a pure state (i.e., $\pm 1$). On the other hand, if the initial datum is a pure state then no separation process will take place.
\end{remark}
\smallskip

As a preliminary result, we have the following conclusion on existence and uniqueness of global weak solutions to problem \eqref{GMS}.
\begin{proposition}[Global weak solutions] \label{gloweak}
Suppose that $\Omega\subset \mathbb{R}^n$ $(n=2,3)$ is a bounded domain with smooth boundary 
$\Gamma$ and $\sigma\geq 0$. For arbitrary $T \in (0, \infty)$, under the assumptions {\rm (A1)--(A4)}, problem \eqref{GMS} admits a global weak solution $(\boldsymbol{u}, \boldsymbol{\mu })=(u, u_\Gamma, \mu, \mu_\Gamma)$ in the following sense:
\begin{align}
	&\boldsymbol{v} \in H^1(0,T;\mathcal{V}_{\sigma,0}^*)\cap L^\infty (0,T;\boldsymbol{V}_0) \cap L^2(0,T;\boldsymbol{W}_0), \label{p1} \\
	&\boldsymbol{\mu } \in L^2(0,T;\mathcal{V}_\sigma),	\label{p2}\\
	& \bigl( \beta(u), \beta_\Gamma(u_\Gamma) \bigr) \in L^2(0,T; \boldsymbol{H}),\label{p2a}
\end{align}
with $\boldsymbol{v}=\boldsymbol{u}-m_0\boldsymbol{1}$ such that
\begin{equation}
	\bigl\langle \boldsymbol{u}'(t), \boldsymbol{z}
	\bigr\rangle_{\mathcal{V}_{\sigma}^*, \mathcal{V}_{\sigma}}
	+ a_\sigma \bigl( \boldsymbol{\mu }(t), \boldsymbol{z} \bigr) = 0,
	\quad {\it for~all~} \boldsymbol{z} \in \mathcal{V}_{\sigma}, 
	\quad {\it a.a.\ } t \in (0,T)
	\label{p3}
\end{equation}
and
\begin{alignat}{2}
	& \mu = -\Delta u + \beta (u) + \pi (u),  & & {\it a.e.\ in~} Q_T,
	\label{p4}\\
	&\mu_\Gamma  = \partial _{\boldsymbol{\nu }} u
	- \Delta_\Gamma  u_\Gamma  + \beta_\Gamma (u_\Gamma )
	+ \pi_\Gamma (u_\Gamma ), \quad & & {\it a.e.\ on~} \Sigma_T,
	\label{p5}\\
	& u(0)=u_{0}, \quad {\it a.e.\ in~} \Omega, \quad
	u_{\Gamma} (0)=u_{0\Gamma }, & & {\it a.e.\ on~} \Gamma. 
	\label{pi}
\end{alignat}	
Moreover, the function $\boldsymbol{u}$ is unique and we have
\begin{equation}
	\bigl\| \boldsymbol{u}^{(1)}(t)-\boldsymbol{u}^{(2)}(t) 
	\bigr\|_{\mathcal{V}_{\sigma,0}^*}\leq 
	C_T 
	\bigl \| 
	\boldsymbol{u}^{(1)}_0-\boldsymbol{u}^{(2)}_0 
	\bigr \|_{\mathcal{V}_{\sigma,0}^*},\quad \text{for all}\ t\in [0,T],
	\label{unique1}
\end{equation}
where for $i=1,2$, $\boldsymbol{u}^{(i)}$ is the weak solution corresponding to the initial datum $\boldsymbol{u}^{(i)}_0$, $C_T$ is a positive constant only depending on $L$ and $T$.
\end{proposition}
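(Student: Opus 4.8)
The plan is to establish Proposition~\ref{gloweak} via a Faedo--Galerkin (or time-discretization) approximation combined with the theory of maximal monotone operators, working first with a Yosida-regularized potential and then passing to the limit. First I would regularize the singular nonlinearities: for each $\varepsilon>0$, replace $\widehat{\beta},\widehat{\beta}_\Gamma$ by their Moreau--Yosida approximations $\widehat{\beta}_\varepsilon,\widehat{\beta}_{\Gamma,\varepsilon}$, whose derivatives $\beta_\varepsilon,\beta_{\Gamma,\varepsilon}$ are globally Lipschitz and monotone; by assumption (A2) one needs the regularization on $\Gamma$ to be compatible with that in the bulk, so I would follow the device of \cite{CF15,CGM13} and define the bulk approximation in terms of the boundary one (e.g.\ $\beta_\varepsilon := c_1\beta_{\Gamma,\varepsilon}\circ(\text{rescaling})$ up to the additive constant $c_2$) so that a version of (A2) survives uniformly in $\varepsilon$. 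For the regularized problem the nonlinearities are Lipschitz and standard parabolic theory (Galerkin in the eigenbasis of $\boldsymbol{A}_\sigma$, or the abstract result on evolution equations governed by subdifferentials on $\mathcal{V}_{\sigma,0}^*$ as in \cite{Gal08,CF15}) yields a unique approximate solution $(\boldsymbol{u}_\varepsilon,\boldsymbol{\mu}_\varepsilon)$ with the regularity in \eqref{p1}--\eqref{p2}.

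Next I would derive $\varepsilon$-uniform a priori estimates. Testing \eqref{p3} (at the regularized level) with $\boldsymbol{z}=\boldsymbol{\mu}_\varepsilon$ and using the mass conservation $m(\boldsymbol{u}_\varepsilon(t))\equiv m_0$ (which follows by testing with $\boldsymbol{1}$, since $\kappa=0$) gives the energy identity corresponding to \eqref{ffdiss}, hence a bound for $\boldsymbol{u}_\varepsilon$ in $L^\infty(0,T;\boldsymbol{V})$, for $\widehat{\beta}_\varepsilon(u_\varepsilon),\widehat{\beta}_{\Gamma,\varepsilon}(u_{\Gamma,\varepsilon})$ in $L^\infty(0,T;L^1)$, and for $\nabla\boldsymbol{\mu}_\varepsilon$ in $L^2(0,T;\boldsymbol{H})$ (with the $\Gamma$-gradient term present when $\sigma>0$). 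To control the full norm of $\boldsymbol{\mu}_\varepsilon$ I would estimate its generalized mean $m(\boldsymbol{\mu}_\varepsilon)$: testing \eqref{p4}--\eqref{p5} with $\boldsymbol{u}_\varepsilon-m_0\boldsymbol{1}$ and using the monotonicity inequality $\beta_\varepsilon(r)(r-m_0)\geq \delta|\beta_\varepsilon(r)|-C$ valid for $m_0\in(-1,1)$ (an argument going back to \cite{GMS11,MZ04}), together with (A2), yields an $L^1$-in-space bound on $\beta_\varepsilon(u_\varepsilon)$ and $\beta_{\Gamma,\varepsilon}(u_{\Gamma,\varepsilon})$, hence on $m(\boldsymbol{\mu}_\varepsilon)$, and then the generalized Poincar\'e inequality \eqref{poin} closes the bound for $\boldsymbol{\mu}_\varepsilon$ in $L^2(0,T;\mathcal{V}_\sigma)$. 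With $\nabla\boldsymbol{\mu}_\varepsilon$ bounded, \eqref{p3} gives $\boldsymbol{u}_\varepsilon'$ bounded in $L^2(0,T;\mathcal{V}_{\sigma,0}^*)$. Finally, comparison in the elliptic system \eqref{p4}--\eqml{p5}: writing it as $-\Delta u_\varepsilon+\beta_\varepsilon(u_\varepsilon)=\mu_\varepsilon-\pi(u_\varepsilon)$ in $\Omega$ with the boundary relation playing the role of an inhomogeneous dynamic condition, I would test with $\beta_\varepsilon(u_\varepsilon)$ and $\beta_{\Gamma,\varepsilon}(u_{\Gamma,\varepsilon})$ (using (A2) and the convexity of $\beta',\beta_\Gamma'$ from (A1) to absorb cross terms, as in \cite{CF15}) to bound $(\beta_\varepsilon(u_\varepsilon),\beta_{\Gamma,\varepsilon}(u_{\Gamma,\varepsilon}))$ in $L^2(0,T;\boldsymbol{H})$; elliptic regularity for the bulk-surface operator (see \cite{CF15,GMS11}) then upgrades this to an $L^2(0,T;\boldsymbol{W})$ bound for $\boldsymbol{u}_\varepsilon$.

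With these bounds in hand I would pass to the limit $\varepsilon\to0$: by Banach--Alaoglu and the Aubin--Lions--Simon lemma (using the compact embedding $\boldsymbol{V}_0\hookrightarrow\hookrightarrow\boldsymbol{H}_0$), along a subsequence $\boldsymbol{u}_\varepsilon\to\boldsymbol{u}$ strongly in $C([0,T];\boldsymbol{H})\cap L^2(0,T;\boldsymbol{V})$ and weakly in the spaces of \eqref{p1}, $\boldsymbol{\mu}_\varepsilon\rightharpoonup\boldsymbol{\mu}$ in $L^2(0,T;\mathcal{V}_\sigma)$, and $(\beta_\varepsilon(u_\varepsilon),\beta_{\Gamma,\varepsilon}(u_{\Gamma,\varepsilon}))\rightharpoonup(\xi,\xi_\Gamma)$ in $L^2(0,T;\boldsymbol{H})$. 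Strong $L^2$ convergence plus the standard maximal-monotone/demiclosedness argument (together with $\liminf\int\widehat{\beta}_\varepsilon(u_\varepsilon)\geq\int\widehat\beta(u)$, which forces $u,u_\Gamma\in[-1,1]$ a.e.\ and identifies $\xi=\beta(u)$, $\xi_\Gamma=\beta_\Gamma(u_\Gamma)$) lets me identify all nonlinear limits and obtain \eqref{p1}--\eqref{pi}; the initial conditions survive by the $C([0,T];\boldsymbol{H})$ convergence. For uniqueness and \eqref{unique1}, I would take two solutions, subtract, test the difference of \eqref{p3} with $\boldsymbol{A}_{\sigma}^{-1}(\boldsymbol{u}^{(1)}-\boldsymbol{u}^{(2)})\in\mathcal{V}_{\sigma,0}$ (legitimate since the two solutions share the same mass $m_0$, so the difference lies in $\boldsymbol{H}_0$), use the monotonicity of $\beta,\beta_\Gamma$ to discard the singular terms and the Lipschitz bound (A3) on $\pi,\pi_\Gamma$ to control the remainder by $\tfrac12\|\boldsymbol{u}^{(1)}-\boldsymbol{u}^{(2)}\|_{\mathcal{V}_{\sigma,0}}^2+C(L)\|\boldsymbol{u}^{(1)}-\boldsymbol{u}^{(2)}\|_{\mathcal{V}_{\sigma,0}^*}^2$, and conclude by Gronwall's lemma.

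The main obstacle I anticipate is obtaining the $\varepsilon$-uniform bound on the singular terms $(\beta_\varepsilon(u_\varepsilon),\beta_{\Gamma,\varepsilon}(u_{\Gamma,\varepsilon}))$ in $L^2(0,T;\boldsymbol{H})$ and the attendant $\boldsymbol{W}$-regularity, because this is precisely where the bulk and boundary nonlinearities interact and where one must exploit the domination hypothesis (A2) together with the sign structure of (A1): the cross terms arising when testing the bulk equation with $\beta_\varepsilon(u_\varepsilon)$ produce a boundary contribution $\int_\Gamma\beta_\varepsilon(u_{\Gamma,\varepsilon})\,\partial_{\boldsymbol\nu}u_\varepsilon\,d\Gamma$ that must be rewritten using the dynamic boundary relation \eqref{p5} and then controlled by the coercivity $\beta_\Gamma'\geq\gamma$ and the convexity of $\beta_\Gamma'$, an argument that is delicate (and genuinely uses $\chi>0$, i.e.\ the presence of $-\Delta_\Gamma u_\Gamma$). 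Once that estimate is secured the rest is routine compactness and monotonicity, following the lines of \cite{CF15,GMS11,Gal08}.
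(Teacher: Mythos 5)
Your proposal is correct and follows essentially the same route as the paper, which itself only sketches the argument by deferring to \cite{CF15}: a Yosida-type regularization calibrated so that the domination hypothesis (A2) survives uniformly (the paper uses $\beta_\varepsilon$ and $\beta_{\Gamma,c_1\varepsilon}$, i.e.\ mismatched Yosida parameters rather than your rescaling device, but to the same effect), solvability of the approximate problem by abstract evolution theory (the paper adds a viscous term $\varepsilon\boldsymbol{v}_\varepsilon'$ and invokes the doubly nonlinear inclusion framework of \cite{CV90}), the same chain of uniform estimates (energy, mean of $\boldsymbol{\mu}$ via testing with $\boldsymbol{u}-m_0\boldsymbol{1}$, $L^2$ bounds on the singular terms, elliptic regularity), and the same monotonicity/compactness limit passage and $\mathcal{V}_{\sigma,0}^*$-contraction argument for uniqueness. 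The one step you rightly flag as delicate --- controlling the boundary cross term when testing with $\beta_\varepsilon(u_\varepsilon)$ --- is exactly where \cite{CF15} exploits (A1)--(A2), so no gap remains.
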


The proof of Proposition \ref{gloweak} with $\sigma>0$ follows the same arguments as
\cite[Theorems~2.1, 2.2 and Remarks~1, 2]{CF15}, while the case $\sigma=0$ can be treated in a similar way with minor modifications on function spaces and energy estimates. 
Here, we only sketch the strategy of its proof.
For each $\varepsilon >0$, we consider the following
viscous {C}ahn--{H}illiard system:
\begin{align}
	&\boldsymbol{v}_\varepsilon '(t)
	+
	\boldsymbol{A}_\sigma \boldsymbol{P} \boldsymbol{\mu }_\varepsilon  (t) =\boldsymbol{0}
	\quad {\rm in}~\mathcal{V}_{\sigma,0}^*,\quad {\rm for~a.a.\ } t \in (0,T),
	\label{ap1}\\
	&\boldsymbol{\mu }_\varepsilon (t)=\varepsilon \boldsymbol{v}_\varepsilon '(t) +
	\partial \varphi \bigl( \boldsymbol{v}_\varepsilon (t) \bigr)
	+ \boldsymbol{\beta }_\varepsilon \bigl( \boldsymbol{u}_\varepsilon (t) \bigr)
	+ \boldsymbol{\pi } \bigl( \boldsymbol{u}_\varepsilon (t) \bigr)
	\quad {\rm in~} \boldsymbol{H},
	\quad {\rm for~a.a.\ } t \in (0,T),
	\label{ap2}\\
	&\boldsymbol{v}_\varepsilon (0)=\boldsymbol{v}_0 \quad {\rm in~} \boldsymbol{H}_0,
	\label{ap3}
\end{align}
where $\boldsymbol{v}_\varepsilon=\boldsymbol{u}_\varepsilon -m_0 \boldsymbol{1}$,
$\boldsymbol{\pi }(\boldsymbol{z}):=(\pi (z),\pi_\Gamma (z_\Gamma ))$ and 
$\boldsymbol{\beta }_\varepsilon (\boldsymbol{z}):=
(\beta_\varepsilon (z), \beta_{\Gamma, c_1\varepsilon} (z_\Gamma ))$ such that $\beta_\varepsilon$, 
$\beta_{\Gamma, c_1\varepsilon}$ are the standard Yosida approximation of 
$\beta$, $\beta_\Gamma$, respectively (see e.g., \cite{CC13} and \cite[Section 4]{CF15}). 
In equation \eqref{ap2},
$\varphi :\boldsymbol{H}_0 \to [0,\infty ]$ is a proper,
lower semi-continuous and convex functional
defined by
\begin{equation*}
	\varphi (\mbox{\boldmath $ z$})
	:=
	\begin{cases}
	\displaystyle
	\frac{1}{2} \int_{\Omega }^{} | \nabla z |^2 dx
	+\frac{1}{2} \int_{\Gamma }^{}
	|\nabla _{\Gamma } z_{\Gamma }  |^2 d\Gamma,
	\quad
	\mbox{if }
	\mbox{\boldmath $ z$} \in \mbox{\boldmath $ V$}_0, \vspace{2mm}\\
	+\infty, \quad \mbox{otherwise}.
	\end{cases}
\end{equation*}
The subdifferential $\partial \varphi $ of
$\varphi $ is given by
$\partial \varphi (\boldsymbol{z})=(-\Delta z, \partial_{\boldsymbol{\nu }}z-\Delta _\Gamma z_\Gamma )$ with
$D(\partial \varphi )=\boldsymbol{V}_0 \cap \boldsymbol{W}$ (see, e.g., \cite[Lemma~C]{CF15}).
Then, by the abstract theory of doubly nonlinear evolution inclusions \cite{CV90},
we can prove the existence and uniqueness of an approximate solution
$\boldsymbol{v}_\varepsilon \in H^{1}(0,T;\boldsymbol{H}_0)
\cap C([0,T];\boldsymbol{V}_0) \cap L^2(0,T;\boldsymbol{W}_0)$
with $\boldsymbol{\mu }_\varepsilon \in L^2(0,T;\mathcal{V}_\sigma)$ to
problem \eqref{ap1}--\eqref{ap3}.
From the definition of $\boldsymbol{v}_\varepsilon$, we also know that
$\boldsymbol{u}_\varepsilon \in H^{1}(0,T;\boldsymbol{H})
\cap C ([0,T];\boldsymbol{V}) \cap L^2(0,T;\boldsymbol{W})$. Then one can show that the family of approximating solutions $(\boldsymbol{u}_\varepsilon, \boldsymbol{\mu }_\varepsilon )$ satisfy sufficient a priori estimates that are uniform with respect to the approximation parameter $\varepsilon$.
Hence, by taking the limit as $\varepsilon \to 0$ (up to a subsequence), the limit function $(\boldsymbol{u}, \boldsymbol{\mu })$ is indeed our target solution to problem \eqref{GMS} satisfying properties \eqref{p1}--\eqref{pi}. Uniqueness of soluition can be easily obtained by using the energy method. For further details, we refer to \cite[Sections 3,4]{CF15}.

\subsection{Main results}

We are now in a position to state the main results of this paper. The first theorem is related to the
separation property. 
\begin{theorem}[Separation from pure states]\label{septhm}
Suppose that the domain $\Omega \subset \mathbb{R}^n$ $(n=2,3)$ is bounded with smooth boundary $\Gamma$ and $\sigma\geq 0$, besides, assumptions {\rm (A1)--(A4)} are satisfied. Let $(\boldsymbol{u}, \boldsymbol{\mu })=(u, u_\Gamma, \mu, \mu_\Gamma)$ 
 be the global weak solution to problem \eqref{GMS} obtained in {\rm Proposition \ref{gloweak}}. 
 \begin{enumerate}
 \item[(1)] There exist a constant $\delta_1 \in (0,1)$ and a large time $T_1>0$ such that 
 \begin{equation}
	\bigl\| u(t) \bigr\|_{L^\infty (\Omega )} \le 1-\delta_1,
	\quad
	\bigl\| u_\Gamma (t) \bigr\|_{L^\infty (\Gamma)} \le 1-\delta_1,
	\quad {\it for~all~} t \ge T_1,
	\label{esepe}
\end{equation}
 where the constant $\delta_1$ may depend on $m_0$ but is independent of $\boldsymbol{u}_0$.
 \item[(2)] If $n=2$, $\sigma>0$ and in addition, there is a positive constant $c_0$ such that
\begin{equation}
	\bigl| \beta '(r) \bigr| \le e^{c_0 |\beta (r)|+c_0}, 
	\quad {\it for~all~}r \in (-1,1), 
	\label{A5}
\end{equation}
then for any given $\eta>0$, there exists $\delta_2\in (0,1)$ depending on $\eta$, $m_0$ and $E(\boldsymbol{u}_0)$ such that
\begin{equation}
	\bigl\| u(t) \bigr\|_{L^\infty (\Omega )} \le 1-\delta_2,
	\quad
	\bigl\| u_\Gamma (t) \bigr\|_{L^\infty (\Gamma)} \le 1-\delta_2,
	\quad {\it for~all~} t \ge \eta.
	\label{isepe}
\end{equation}
 \end{enumerate}
\end{theorem}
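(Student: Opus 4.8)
The plan is to establish the two separation statements by combining energy-type a priori bounds, elliptic regularity for the bulk-surface system, and De Giorgi iteration. First I would exploit the dissipation identity \eqref{ffdiss} (with $\kappa=0$): the energy $E(\boldsymbol{u}(t))$ is nonincreasing, so $\widehat{\beta}(u(t))\in L^1(\Omega)$, $\widehat{\beta}_\Gamma(u_\Gamma(t))\in L^1(\Gamma)$, and $\boldsymbol{\mu}\in L^2_{\mathrm{loc}}(0,\infty;\mathcal{V}_\sigma)$ uniformly in time, while $\nabla\boldsymbol{\mu}\in L^2$. Integrating the chemical potential equation \eqref{p4}--\eqref{p5} against suitable test functions and using the coercivity $\beta'\geq\gamma$ from (A1), one obtains (as in \cite{GMS11,CGM13,CF15}) an estimate of the form $\|\beta(u)\|_{L^1(\Omega)}+\|\beta_\Gamma(u_\Gamma)\|_{L^1(\Gamma)}\leq C(1+\|\mu\|)$, where the mean value $m(\boldsymbol{\mu})$ is controlled because $m(\boldsymbol{u})=m_0\in(-1,1)$ is conserved. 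This yields $\boldsymbol{u}\in L^2_{\mathrm{loc}}(0,\infty;\boldsymbol{W})$ and $(\beta(u),\beta_\Gamma(u_\Gamma))\in L^2_{\mathrm{loc}}(0,\infty;\boldsymbol{H})$ with bounds eventually uniform in $t$ (the preliminary regularity result announced for Section 3). Since $\boldsymbol{u}'\in L^2(0,T;\mathcal{V}_{\sigma,0}^*)$, a standard argument (there exists a sequence $t_k\to\infty$ along which $\|\boldsymbol{\mu}(t_k)\|_{\mathcal{V}_\sigma}$ is bounded and $\|\boldsymbol{u}'(t_k)\|$ is small) produces, after a further bootstrap, a time $T_1$ beyond which $\boldsymbol{u}(t)$ lies in a bounded set of $\boldsymbol{W}\hookrightarrow C(\overline\Omega)\times C(\Gamma)$.

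For part (1) I would then run a De Giorgi / Stampacchia truncation argument on the elliptic problem $-\Delta u+\beta(u)=\mu-\pi(u)$ in $\Omega$ coupled with $\partial_{\boldsymbol\nu}u-\Delta_\Gamma u_\Gamma+\beta_\Gamma(u_\Gamma)=\mu_\Gamma-\pi_\Gamma(u_\Gamma)$ on $\Gamma$, for $t\geq T_1$. Testing with $(u-k)_+$ (and simultaneously its trace, which is legitimate because of the structure of $\boldsymbol{V}$) and using (A2) to absorb the boundary nonlinearity into the bulk one, one derives a nonlinear recursion for the level sets of $u$ and $u_\Gamma$ that, given the uniform $L^\infty$ bound already obtained and the blow-up of $\beta,\beta_\Gamma$ at $\pm1$, forces $\|u(t)\|_{L^\infty(\Omega)}\leq 1-\delta_1$ and $\|u_\Gamma(t)\|_{L^\infty(\Gamma)}\leq 1-\delta_1$ with $\delta_1$ depending only on the uniform-in-time energy bound — hence only on $m_0$, not on $\boldsymbol{u}_0$, once one notes that for $t\geq T_1$ the energy is controlled by a constant depending solely on $m_0$ (this last point uses that the $\omega$-limit set is bounded uniformly in the initial datum, which follows from the gradient-flow structure and conservation of mass).

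For part (2), with $n=2$, $\sigma>0$, and the extra growth condition \eqref{A5}, the strategy is to upgrade the separation to hold instantaneously after an arbitrary $\eta>0$, with $\delta_2$ quantitative in $\eta$, $m_0$, $E(\boldsymbol{u}_0)$. The two-dimensional improvement comes from the Trudinger--Moser / Brezis--Gallouet inequality on $\Omega$ and on $\Gamma$: the logarithmic-type growth \eqref{A5} relates $\beta'(u)$ to $e^{c_0|\beta(u)|}$, and exponential integrability of $\beta(u)$ (which follows from $\beta(u)\in H^1$ bounds via Trudinger--Moser) gives $\beta'(u)\in L^p$ for all $p<\infty$, hence improved regularity of $u$ in $W^{2,p}$ and of the nonlinear terms. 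The time-integrated estimates of Section 3, combined with $\|\boldsymbol{\mu}\|_{L^2(\eta/2,T;\mathcal{V}_\sigma)}\leq C(\eta,E(\boldsymbol{u}_0))$, allow one to pick, for each $t\geq\eta$, a nearby time at which $\boldsymbol{\mu}$ is bounded in $\mathcal{V}_\sigma$ with the quantitative constant; then the smoothing property of the Cahn--Hilliard flow on the interval $(t-\eta/2,t)$ propagates this to a $W^{2,p}$ bound on $\boldsymbol{u}(t)$ uniform for $t\geq\eta$, and De Giorgi iteration as in part (1) — now with constants tracked explicitly through $\eta$, $m_0$, $E(\boldsymbol{u}_0)$ — yields \eqref{isepe}.

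The main obstacle I anticipate is the bulk-surface coupling in the De Giorgi iteration: the truncation $(u-k)_+$ must be handled simultaneously in $\Omega$ and on $\Gamma$ (its trace is $(u_\Gamma-k)_+$), and the boundary term $\int_\Gamma|\nabla_\Gamma(u_\Gamma-k)_+|^2$ together with $\int_\Gamma\beta_\Gamma(u_\Gamma)(u_\Gamma-k)_+$ must be controlled in a way compatible with the bulk iteration — this is where (A2) and the surface diffusion $\chi=1$ are essential, and where one must be careful that the Sobolev/Trudinger--Moser constants used on $\Gamma$ (a compact $(n-1)$-manifold) are consistent with those on $\Omega$. A secondary technical point is making the dependence of $\delta_2$ on $\eta$, $m_0$, $E(\boldsymbol{u}_0)$ (and \emph{not} on higher norms of $\boldsymbol{u}_0$) fully quantitative, which requires the smoothing estimates on short time intervals to be explicit rather than merely qualitative.
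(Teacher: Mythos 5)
Your part (1) has a genuine gap. The truncation/De Giorgi argument you propose for the elliptic system $-\Delta u+\beta(u)=\mu-\pi(u)$ can only force $u\le 1-\delta_1$ if you know that $\widetilde\mu:=\mu-\pi(u)$ is bounded in $L^\infty(\Omega)$ uniformly for $t\ge T_1$ (so that you can pick $\delta_1$ with $\beta(1-\delta_1)>\|\widetilde\mu\|_{L^\infty}$ and test with $[u-(1-\delta_1)]^+$). But in three dimensions, with $\sigma\ge 0$, the available uniform estimate is only $\boldsymbol{\mu}\in L^\infty(\eta,\infty;\mathcal{V}_\sigma)$, i.e.\ $\mu(t)\in H^1(\Omega)\hookrightarrow L^6(\Omega)$, and $H^1$ does \emph{not} embed into $L^\infty$; a bound on $\boldsymbol{u}(t)$ in $\boldsymbol{W}$ does not help, since it is $\mu$, not $u$, that must be bounded. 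With $\mu$ merely in $L^6$ and a slowly diverging $\beta$ (for the logarithmic potential $\beta$ blows up only logarithmically at $\pm1$), the level-set recursion does not close: you get smallness of $\mathrm{meas}\{u>1-\delta\}$ but never that it vanishes. This is precisely why the paper does \emph{not} prove (1) by elliptic truncation on $u(t)$ itself; instead it proves separation for the \emph{steady states} (Lemma 4.4), where the chemical potential $\mu_s$ is a genuine constant bounded by $M_{m_0}$ and the comparison with $[u_s-(1-\delta_{m_0})]^+$ does close, and then transfers the separation to $u(t)$ for large $t$ via the convergence of the orbit to the $\omega$-limit set $\omega(\boldsymbol{u}_0)\subset\mathcal{S}_{m_0}$ in $H^{2r}(\Omega)\times H^{2r}(\Gamma)\hookrightarrow C(\overline\Omega)\times C(\Gamma)$. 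You invoke the $\omega$-limit set only parenthetically to justify the uniformity of $\delta_1$ in $\boldsymbol{u}_0$, but it is in fact the mechanism that makes the eventual separation provable at all in 3D.

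Part (2) is broadly aligned with the paper's strategy (Trudinger--Moser plus \eqref{A5} to get $\|\beta'(u)\|_{L^p}$, then an $L^\infty$ bound on $\widetilde\mu$, then the $p\to\infty$ limit in the $|\beta|^{p-2}\beta$ test), but the step you describe as ``the smoothing property of the Cahn--Hilliard flow propagates this to a $W^{2,p}$ bound'' hides the actual crux: one must show $\boldsymbol{u}'\in L^\infty(2\eta,\infty;\boldsymbol{H})$, which the paper obtains by a difference-quotient estimate in which $\|\partial_t^h\beta(u)\|_{H}$ is controlled via the \emph{convexity of $\beta'$} (assumption (A1)) together with the $L^p$ bounds on $\beta'(u)$ and the 2D Agmon inequality, followed by a uniform Gronwall argument. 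Only then does elliptic regularity give $\boldsymbol{\mu}\in L^\infty(2\eta,\infty;\boldsymbol{W})\hookrightarrow L^\infty$, which is the input your final iteration needs. Without an explicit route from $\beta'(u)\in L^p$ to an $L^\infty$-in-time bound on $\boldsymbol{u}'$ in $\boldsymbol{H}$, part (2) is also incomplete as written.
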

\smallskip
\begin{remark}
{\rm (1)} The estimate \eqref{esepe} implies that the value of $\boldsymbol{u}$ will be strictly separated from the 
pure states $\pm 1$ at least after a certain large positive time by a uniform distance. As a consequence, 
the singular potentials $\beta$, $\beta_\Gamma$ and their derivatives will no longer blow up along the evolution and they turn out to be 
Lipschitz continuous and bounded functions. 
This fact leads to further higher-order regularity of global weak solutions and will be helpful for the study of long-time 
behavior of problem \eqref{GMS}. \smallskip
\\
{\rm (2)} It is straightforward to verify that the additional assumption \eqref{A5} is 
satisfied in the case of logarithmic potential \eqref{log1}. \smallskip
\\
{\rm (3)} We note that when $\sigma>0$, the term $\sigma \Delta_\Gamma \mu_\Gamma$ accounting for boundary diffusion yields a regularizing effect on the boundary. It remains an open question whether the conclusion \eqref{isepe} still holds when $\sigma=0$. 
\end{remark}
\smallskip

Our next result concerns the long-time behavior of problem \eqref{GMS}, more precisely, 
we prove the uniqueness of asymptotic limit of any global weak solution as $t\to \infty$.
\begin{theorem}[Convergence to equilibrium]\label{convthm}
Suppose that the domain $\Omega \subset \mathbb{R}^n$ $(n=2,3)$ is bounded with
smooth boundary $\Gamma$ and $\sigma\geq 0$, besides, assumptions {\rm (A1)--(A4)} are satisfied. In addition, we assume that
 $\beta$, $\beta_\Gamma$ are real analytic on $(-1,1)$ and $\pi $, $\pi_\Gamma$ are real analytic on $\mathbb{R}$. Let $\boldsymbol{u}$ be the global weak solution to problem \eqref{GMS} obtained in Proposition \ref{gloweak}, we have 
\begin{equation*}
	\lim_{t \to \infty} \bigl\| \boldsymbol{u}(t)-\boldsymbol{u}_\infty \bigr\|_{\boldsymbol{W}}=0,
\end{equation*}
where $\boldsymbol{u}_\infty:=(u_\infty, u_{\Gamma, \infty })$ is a steady state to problem \eqref{GMS} that satisfies the nonlocal elliptic problem
\begin{equation*}
	\begin{cases}
	-\Delta u_\infty +\beta (u_\infty )+\pi (u_\infty )=\mu _\infty, \quad
	{\it a.e.\ in~}\Omega, \\
	\partial _{\boldsymbol{\nu }} u_\infty -
	\Delta _\Gamma u_{\Gamma,\infty} + 
	\beta_\Gamma (u_{\Gamma, \infty} )+ \pi_\Gamma (u_{\Gamma, \infty} )=\mu _\infty,
	\quad {\it a.e.\ on~}\Gamma, \\
	m(\boldsymbol{u}_\infty )=m_0,
	\end{cases}
\end{equation*}
with a constant $\mu_\infty$ given by 
\begin{equation*}
	\mu_\infty=
	\frac{1}{|\Omega|+|\Gamma|} 
	\left [ 
	\int_\Omega \bigl( \beta (u_\infty )+\pi (u_\infty )  \bigr) dx
	+ \int_\Gamma \bigl( \beta_\Gamma (u_{\Gamma, \infty} )
	+ \pi_\Gamma (u_{\Gamma, \infty} ) \bigr) d\Gamma 
	\right ].
\end{equation*}
Moreover, 
\begin{equation*}
	\bigl\|
	\boldsymbol{u}(t)-\boldsymbol{u}_\infty 
	\bigr\|_{\boldsymbol{W}}
	\leq C_\eta (1+t)^{-\frac{\theta^*}{1-2\theta^*}}, \quad \mbox{\it for all } t\geq \eta>0,
\end{equation*}
where $\theta^*\in (0,1/2)$ is a constant depending on $\boldsymbol{u}_\infty$, 
the positive constant $C_\eta$ may depend on 
$E(\boldsymbol{u}_0)$, $\boldsymbol{u}_\infty$, $m_0$, $\Omega$, $\Gamma$,  and $\eta$.
\end{theorem}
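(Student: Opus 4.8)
The plan is to follow the {\L}ojasiewicz--Simon strategy adapted to the bulk-surface Cahn--Hilliard setting, using the separation property from Theorem \ref{septhm} as the key enabling ingredient. First I would exploit part (1) of Theorem \ref{septhm}: there exist $\delta_1\in(0,1)$ and $T_1>0$ such that $\|u(t)\|_{L^\infty(\Omega)}, \|u_\Gamma(t)\|_{L^\infty(\Gamma)}\le 1-\delta_1$ for all $t\ge T_1$. On the time interval $[T_1,\infty)$ the nonlinearities $\beta,\beta_\Gamma$ and their derivatives $\beta',\beta_\Gamma'$ are uniformly bounded and (globally) Lipschitz when restricted to $[-1+\delta_1, 1-\delta_1]$; combined with (A3) this means the equation effectively has $C^1$, globally Lipschitz nonlinear terms for $t\ge T_1$. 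Standard parabolic/elliptic bootstrap then upgrades the weak solution: from \eqref{p1}--\eqref{p5} together with the elliptic problem \eqref{p4}--\eqref{p5} for $(u,u_\Gamma)$ with right-hand side $\mu - \pi(u) - \beta(u)\in L^2$, one obtains $\boldsymbol{u}\in L^\infty(T_1,\infty;\boldsymbol{W})$, $\boldsymbol{\mu}\in L^\infty(T_1,\infty;\mathcal V_\sigma)$, and enough higher regularity to make the subsequent computations rigorous. This reduces the problem to essentially the regular-potential situation treated in \cite{GW08, Wu07}.

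Next I would establish the standard structural facts underlying the {\L}ojasiewicz--Simon argument. From the energy identity \eqref{ffdiss} (with $\kappa=0$), $E(\boldsymbol{u}(t))$ is nonincreasing and bounded below, hence converges to some $E_\infty$ as $t\to\infty$, and $\int_0^\infty \big(\|\nabla\mu\|_{L^2(\Omega)}^2 + \sigma\|\nabla_\Gamma\mu_\Gamma\|_{L^2(\Gamma)}^2\big)\,dt<\infty$. Together with the mass conservation $m(\boldsymbol{u}(t))\equiv m_0$ and the Poincaré inequality \eqref{poin}, this forces $\boldsymbol{\mu}(t) - m(\boldsymbol{\mu}(t))\boldsymbol{1}\to \boldsymbol 0$ in $\mathcal V_\sigma$ along a sequence, and in fact, via the uniform estimates and the compact embedding $\boldsymbol{V}_0\hookrightarrow\hookrightarrow\boldsymbol{H}_0$, the $\omega$-limit set $\omega(\boldsymbol{u}_0)$ (in the topology of $\boldsymbol{V}$, say) is nonempty, compact, connected, and consists entirely of steady states $\boldsymbol{u}_\infty$ solving the nonlocal elliptic system in the statement, with the displayed formula for $\mu_\infty$ following by integrating the two equations and using the Dirichlet/Wentzell coupling. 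Every element of $\omega(\boldsymbol{u}_0)$ has the same mean $m_0$ and the same energy level $E_\infty$. The steady states inherit the strict separation $\|u_\infty\|_{L^\infty}\le 1-\delta_1$ (as limits of separated functions), so $\widehat\beta, \widehat\beta_\Gamma$ are real-analytic in a neighborhood of the relevant range; this is exactly where the analyticity hypothesis on $\beta,\beta_\Gamma,\pi,\pi_\Gamma$ enters.

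The heart of the proof is an \emph{extended {\L}ojasiewicz--Simon inequality} adapted to the constraint $m(\cdot)=m_0$ and to the bulk-surface energy \eqref{eneF}. Fixing $\boldsymbol{\psi}\in\omega(\boldsymbol{u}_0)$, I would show that there are constants $\theta^*\in(0,1/2)$, $C>0$, $\rho>0$ such that
\begin{equation*}
	\big| E(\boldsymbol{v}) - E(\boldsymbol{\psi})\big|^{1-\theta^*}
	\le C\,\big\| E'(\boldsymbol{v}) - m\big(E'(\boldsymbol{v})\big)\boldsymbol{1}\big\|_{\mathcal V_{\sigma,0}^*}
\end{equation*}
for all $\boldsymbol{v}\in\boldsymbol{W}$ with $m(\boldsymbol{v})=m_0$ and $\|\boldsymbol{v}-\boldsymbol{\psi}\|_{\boldsymbol{V}}<\rho$; here $E'$ is the Fréchet derivative $(-\Delta v+\beta(v)+\pi(v),\ \partial_{\boldsymbol\nu}v-\Delta_\Gamma v_\Gamma+\beta_\Gamma(v_\Gamma)+\pi_\Gamma(v_\Gamma))$, and the quantity on the right is comparable (by \eqref{inner}) to $\|\boldsymbol{\mu} - m(\boldsymbol{\mu})\boldsymbol{1}\|_{\mathcal V_{\sigma,0}^*}$ when $\boldsymbol v = \boldsymbol u(t)$, and also to the dissipation. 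I would derive this by the Lyapunov--Schmidt reduction of the constrained functional onto the finite-dimensional kernel of its Hessian at $\boldsymbol\psi$ (the linearized operator $-\Delta + \beta'(\psi)+\pi'(\psi)$ with the analogous Wentzell boundary operator is self-adjoint, elliptic, with compact resolvent, hence Fredholm), followed by applying the classical finite-dimensional {\L}ojasiewicz inequality to the reduced analytic function; the real-analyticity of the Nemytskii maps $\boldsymbol v\mapsto\beta(\boldsymbol v)$ etc. on the separated range is what guarantees analyticity of the reduced function. This is the main obstacle: the functional-analytic setup (choosing the right spaces $\mathcal V_{\sigma,0}$ vs.\ $\boldsymbol V$, handling the trace coupling $z_\Gamma = z_{|_\Gamma}$, verifying Fredholmness of the bulk-surface linearization, and controlling the $\sigma=0$ degenerate boundary case) requires care, essentially reproducing \cite[Theorem~3.22]{GMS11} but now legitimately, since the separation property has removed the growth restriction that excluded \eqref{log}.

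Finally I would run the standard {\L}ojasiewicz--Simon argument to conclude. By the convergence of the $\omega$-limit to a single point it suffices to show $\boldsymbol{u}(t)$ stays, for $t$ large, in the ball where the inequality holds around one fixed $\boldsymbol\psi$. Define $\mathcal H(t) := \big(E(\boldsymbol u(t)) - E_\infty\big)^{\theta^*}$; along the trajectory, using the energy identity \eqref{ffdiss} to write $-\frac{d}{dt}E(\boldsymbol u(t)) = \|\boldsymbol\mu(t)\|^2_{(\cdots)}$ and the equation \eqref{p3} to identify $\|\boldsymbol u'(t)\|_{\mathcal V_{\sigma,0}^*}$ with $\|\boldsymbol\mu(t) - m(\boldsymbol\mu(t))\boldsymbol 1\|_{\mathcal V_{\sigma,0}}$-type quantities, one obtains a differential inequality of the form $-\frac{d}{dt}\mathcal H(t)\ge c\,\|\boldsymbol u'(t)\|_{\mathcal V_{\sigma,0}^*}$, whence $\boldsymbol u'\in L^1(T_*,\infty;\mathcal V_{\sigma,0}^*)$ and $\boldsymbol u(t)$ converges in $\mathcal V_{\sigma,0}^*$, hence (by uniqueness of the limit and the precompactness in $\boldsymbol V$) in $\boldsymbol V$, to a single $\boldsymbol u_\infty$. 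Upgrading $\boldsymbol V\to\boldsymbol W$ convergence follows from elliptic regularity applied to the difference $\boldsymbol u(t)-\boldsymbol u_\infty$ of the steady-state equations together with $\boldsymbol u(t)\to\boldsymbol u_\infty$ and $\boldsymbol\mu(t)\to\mu_\infty\boldsymbol 1$. The algebraic decay rate $(1+t)^{-\theta^*/(1-2\theta^*)}$ is then extracted in the usual way: integrating the differential inequality $\frac{d}{dt}\big(E(\boldsymbol u)-E_\infty\big) \le -c\big(E(\boldsymbol u)-E_\infty\big)^{2(1-\theta^*)}$ gives the decay of the energy gap, and re-integrating the bound on $\|\boldsymbol u'\|_{\mathcal V_{\sigma,0}^*}$ over $[t,\infty)$ transfers it to $\|\boldsymbol u(t)-\boldsymbol u_\infty\|_{\mathcal V_{\sigma,0}^*}$, and finally to $\|\boldsymbol u(t)-\boldsymbol u_\infty\|_{\boldsymbol W}$ through interpolation and the uniform $\boldsymbol W$-bound, with the constant $C_\eta$ absorbing the length of the initial transient $[\eta, T_*]$.
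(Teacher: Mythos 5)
Your proposal is correct and follows essentially the same route as the paper: eventual separation (Theorem \ref{septhm}, part (1)) plus elliptic regularity to get precompactness of the orbit in $\boldsymbol{W}$, the characterization of $\omega(\boldsymbol{u}_0)$ as a compact connected set of uniformly separated steady states, an extended {\L}ojasiewicz--Simon inequality made legitimate by analyticity on the separated range, and the standard differential-inequality argument giving convergence and the algebraic rate. The only (harmless) deviation is that you state the {\L}ojasiewicz--Simon inequality with the dual norm $\mathcal{V}_{\sigma,0}^*$ on the right-hand side, whereas the paper's Lemma \ref{LSi} uses the $\boldsymbol{H}_0$-norm and then passes to the dissipation term $\|\boldsymbol{u}'\|_{\mathcal{V}_{\sigma,0}^*}=\|\boldsymbol{P}\boldsymbol{\mu}\|_{\mathcal{V}_{\sigma,0}}$ via the generalized Poincar\'e inequality; both variants produce the same differential inequality for $\bigl(E(\boldsymbol{u}(t))-E_\infty\bigr)^{\theta^*}$.
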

\begin{remark}
The results of Theorem \ref{septhm} and Theorem \ref{convthm} 
can be extended to the case  
with permeable walls (i.e., $\kappa>0$) with minor changes in function 
spaces and estimates, keeping in mind that the mass conservation property no longer 
holds (see \eqref{ffmass}) and on the other hand, there exists an extra boundary dissipation term in the energy equality (see \eqref{ffdiss}). This will compensate the generalized Poincar\'{e} inequality to recover the $H^1$-norm of $\mu$.
\end{remark}

\section{Regularity of Global Weak Solutions}
\setcounter{equation}{0}

In this section, we prove some basic properties and preliminary regularity results  for the global weak solution $(\boldsymbol{u},\boldsymbol{\mu })$ to problem \eqref{GMS}.

\subsection{Mass conservation and energy equality}
Hereafter, let $(\boldsymbol{u},\boldsymbol{\mu })$ be the unique global weak solution obtained in Proposition \ref{gloweak}. Taking $\boldsymbol{z}=\boldsymbol{1}$ as the test function in the weak form \eqref{p3}, we easily deduce the mass conservation property for problem \eqref{GMS} (see also \cite[Remark 2]{CF15}):
\begin{lemma} For all $t\geq 0$, it holds
\begin{equation}
	m \bigl( \boldsymbol{u}(t) \bigr)=m_0.
	\label{masscov}
\end{equation}
\end{lemma}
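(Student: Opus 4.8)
The plan is to exploit the conservation structure encoded in the weak formulation \eqref{p3}. First I would insert the constant pair $\boldsymbol{z}=\boldsymbol{1}=(1,1)$ as test function in \eqref{p3}; this is admissible since $\boldsymbol{1}\in\mathcal{V}_\sigma$ for every $\sigma\geq 0$ ($1\in H^2(\Omega)$ and $1_{|_\Gamma}=1\in H^2(\Gamma)$). Because $\nabla 1=0$ in $\Omega$ and $\nabla_\Gamma 1=0$ on $\Gamma$, the bilinear form satisfies $a_\sigma(\boldsymbol{\mu}(t),\boldsymbol{1})=0$, so \eqref{p3} reduces to
\begin{equation*}
	\bigl\langle \boldsymbol{u}'(t),\boldsymbol{1}\bigr\rangle_{\mathcal{V}_\sigma^*,\mathcal{V}_\sigma}=0,\qquad\text{for a.a.\ }t\in(0,T).
\end{equation*}

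Next I would integrate this identity in time against the initial datum. By \eqref{p1}, $\boldsymbol{v}:=\boldsymbol{u}-m_0\boldsymbol{1}\in H^1(0,T;\mathcal{V}_{\sigma,0}^*)\cap L^\infty(0,T;\boldsymbol{V}_0)\cap L^2(0,T;\boldsymbol{W}_0)$, hence $\boldsymbol{v}\in C([0,T];\boldsymbol{H}_0)$ and $\boldsymbol{u}\in C([0,T];\boldsymbol{H})$; the standard integration-by-parts rule for the Gelfand triple $\mathcal{V}_{\sigma,0}\hookrightarrow\boldsymbol{H}_0\hookrightarrow\mathcal{V}_{\sigma,0}^*$ (extended to test functions in $\mathcal{V}_\sigma$ via the projection $\boldsymbol{P}$, as in Section 2) then gives
\begin{equation*}
	\bigl(\boldsymbol{u}(t),\boldsymbol{1}\bigr)_{\boldsymbol{H}}-\bigl(\boldsymbol{u}_0,\boldsymbol{1}\bigr)_{\boldsymbol{H}}=\int_0^t\bigl\langle \boldsymbol{u}'(s),\boldsymbol{1}\bigr\rangle_{\mathcal{V}_\sigma^*,\mathcal{V}_\sigma}\,ds=0,\qquad\text{for all }t\in[0,T].
\end{equation*}
Since $(\boldsymbol{u}(t),\boldsymbol{1})_{\boldsymbol{H}}=\int_\Omega u(t)\,dx+\int_\Gamma u_\Gamma(t)\,d\Gamma=(|\Omega|+|\Gamma|)\,m(\boldsymbol{u}(t))$ and, by \eqref{pi} and (A4), $m(\boldsymbol{u}(0))=m(\boldsymbol{u}_0)=m_0$, this yields $m(\boldsymbol{u}(t))=m_0$ for every $t\in[0,T]$; as $T\in(0,\infty)$ is arbitrary, \eqref{masscov} holds for all $t\geq 0$.

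Finally, I would note that the conclusion is in fact built into the regularity \eqref{p1}: $\boldsymbol{v}(t)\in\boldsymbol{V}_0\subset\boldsymbol{H}_0$ for a.a.\ $t$, hence (by continuity of $\boldsymbol{v}$ in $\boldsymbol{H}$) for all $t$, which says precisely $m(\boldsymbol{v}(t))=0$, so that $m(\boldsymbol{u}(t))=m(\boldsymbol{v}(t))+m_0\,m(\boldsymbol{1})=m_0$ by linearity of $m$ — and, upon unpacking, this is exactly what makes the integration-by-parts step above go through, since $\boldsymbol{u}'(t)=\boldsymbol{v}'(t)\in\mathcal{V}_{\sigma,0}^*$ pairs to zero against $\boldsymbol{1}$. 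There is no genuine obstacle here: the argument uses neither compactness nor any nonlinear estimate, and the only point worth a line of care is the use of the integration-by-parts formula against the constant $\boldsymbol{1}$, which is not mean-zero and is handled through the decomposition $\boldsymbol{1}=\boldsymbol{P}\boldsymbol{1}+m(\boldsymbol{1})\boldsymbol{1}$ (or simply by invoking the analogous lemma in the Appendix).
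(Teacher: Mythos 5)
Your proposal is correct and follows essentially the same route as the paper, which simply tests \eqref{p3} with $\boldsymbol{z}=\boldsymbol{1}$, uses $a_\sigma(\boldsymbol{\mu},\boldsymbol{1})=0$, and integrates in time. Your closing observation that the conclusion is already encoded in the regularity $\boldsymbol{v}=\boldsymbol{u}-m_0\boldsymbol{1}\in\boldsymbol{V}_0$ from \eqref{p1} is a fair and accurate remark, consistent with how the pairing against $\boldsymbol{1}$ is defined via the projection $\boldsymbol{P}$.
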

Next, we define the free energy of the system (recall \eqref{eneF}):
\begin{align}
	E(\boldsymbol{z})
	&:=
	\frac{1}{2} \int_{\Omega }^{} |\nabla z|^2 dx
	+
	\frac{1}{2} \int_{\Gamma}^{} |\nabla_\Gamma z_\Gamma |^2 d\Gamma
	+
	\int_{\Omega }^{} \bigl(\widehat{\beta}(z)+ \widehat{\pi}(z)\bigl) dx\nonumber\\
	&\quad\ \  	
	+
	\int_{\Gamma }^{} \bigl(\widehat{\beta}_\Gamma(z_\Gamma ) 
	+ \widehat{\pi}_\Gamma(z_\Gamma )\bigr) d\Gamma,
	\label{ene}
\end{align}
for all $\boldsymbol{z}:=(z,z_\Gamma ) \in \boldsymbol{V}$ with
$\widehat{\beta} (z) \in L^1(\Omega )$, $\widehat{\beta}(z_\Gamma) \in L^1(\Gamma )$. 
Here, the primitives $\widehat{\pi}$, $\widehat{\pi}_\Gamma$ are given by 
\begin{equation*}
	\widehat{\pi}(r)=\int_0^r \pi(s) ds,\quad \widehat{\pi}_\Gamma(r)=\int_0^r \pi_\Gamma(s) ds, \quad \text{for }r\in [-1,1].
\end{equation*}
Then we can derive a basic energy inequality for problem \eqref{GMS} that yields uniform in time estimate for global weak solutions:
\begin{lemma} \label{basicene}
For a.a.\  $t \ge 0$, it holds 
\begin{equation}
	E\bigl(\boldsymbol{u}(t) \bigr)
	+ \int_{0}^{t} \bigl \| 
	\boldsymbol{u}'(s) \bigr \|_{\mathcal{V}_{\sigma,0}^*}^2 ds
	\le E(\boldsymbol{u}_0).
	\label{ein}
\end{equation}
Then there exists a positive constant $M_1$ such that
\begin{gather}
	\| \boldsymbol{u} \|_{L^\infty (0, \infty;\boldsymbol{V})}
	+
	\int_{0}^{\infty} \bigl \| 
	\boldsymbol{u}'(s) \bigr \|_{\mathcal{V}_{\sigma,0}^*}^2 ds 
	\le M_1,
	\label{est0}
	\\
	\int_{0}^{\infty }
	\bigl \| \boldsymbol{P} \boldsymbol{\mu }(s) 
	\bigr \|_{\mathcal{V}_{\sigma,0}}^2 ds
	\le M_1.
	\label{estmu}
\end{gather}
\end{lemma}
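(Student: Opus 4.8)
The plan is to establish the energy inequality \eqref{ein} first at the approximating level and then pass to the limit. At the level of the viscous problem \eqref{ap1}--\eqref{ap3}, I would test \eqref{ap1} by $\boldsymbol{A}_\sigma^{-1}\boldsymbol{v}_\varepsilon'$ (equivalently pair \eqref{ap1} with $\boldsymbol{\mu}_\varepsilon$ in the duality between $\mathcal{V}_{\sigma,0}^*$ and $\mathcal{V}_{\sigma,0}$) and substitute \eqref{ap2}. The term $a_\sigma(\boldsymbol{\mu}_\varepsilon,\boldsymbol{A}_\sigma^{-1}\boldsymbol{v}_\varepsilon')$ equals $\langle\boldsymbol{v}_\varepsilon',\boldsymbol{\mu}_\varepsilon\rangle$; using \eqref{ap1} this becomes $-\|\boldsymbol{v}_\varepsilon'\|_{\mathcal{V}_{\sigma,0}^*}^2$, while the right-hand side involving $\partial\varphi(\boldsymbol{v}_\varepsilon)$, $\boldsymbol{\beta}_\varepsilon(\boldsymbol{u}_\varepsilon)$ and $\boldsymbol{\pi}(\boldsymbol{u}_\varepsilon)$ produces, upon integration in time, the exact derivative of the approximate energy
\begin{equation*}
	E_\varepsilon(\boldsymbol{u}_\varepsilon) := \frac12\int_\Omega|\nabla u_\varepsilon|^2\,dx + \frac12\int_\Gamma|\nabla_\Gamma u_{\varepsilon\Gamma}|^2\,d\Gamma + \int_\Omega\bigl(\widehat{\beta}_\varepsilon(u_\varepsilon)+\widehat{\pi}(u_\varepsilon)\bigr)\,dx + \int_\Gamma\bigl(\widehat{\beta}_{\Gamma,c_1\varepsilon}(u_{\varepsilon\Gamma})+\widehat{\pi}_\Gamma(u_{\varepsilon\Gamma})\bigr)\,d\Gamma,
\end{equation*}
plus the nonnegative viscous contribution $\varepsilon\|\boldsymbol{v}_\varepsilon'\|_{\boldsymbol{H}_0}^2$. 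Integrating from $0$ to $t$ and dropping the viscous term gives $E_\varepsilon(\boldsymbol{u}_\varepsilon(t)) + \int_0^t\|\boldsymbol{u}_\varepsilon'(s)\|_{\mathcal{V}_{\sigma,0}^*}^2\,ds \le E_\varepsilon(\boldsymbol{u}_0)$.

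Next I would pass to the limit $\varepsilon\to0$. By the convergences already invoked in the sketch of Proposition \ref{gloweak} (weak-$*$ convergence of $\boldsymbol{u}_\varepsilon$ in $L^\infty(0,T;\boldsymbol{V})$, weak convergence of $\boldsymbol{u}_\varepsilon'$ in $L^2(0,T;\mathcal{V}_{\sigma,0}^*)$, and strong convergence in $C([0,T];\boldsymbol{H})$ hence a.e. pointwise), weak lower semicontinuity of the $\boldsymbol{V}$-seminorm and of the $L^2$-norm of the time derivative handles the left-hand side; for the potential terms, $\widehat{\beta}_\varepsilon \le \widehat{\beta}$ pointwise together with Fatou's lemma controls $\int_\Omega\widehat{\beta}(u(t))$ from below by $\liminf\int_\Omega\widehat{\beta}_\varepsilon(u_\varepsilon(t))$, and the Lipschitz perturbations $\widehat{\pi},\widehat{\pi}_\Gamma$ pass via dominated convergence. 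On the right-hand side, $\widehat{\beta}_\varepsilon(u_0)\nearrow\widehat{\beta}(u_0)$ with the compatibility assumption (A4) $\widehat{\beta}(u_0)\in L^1(\Omega)$ (and likewise on $\Gamma$) gives $E_\varepsilon(\boldsymbol{u}_0)\to E(\boldsymbol{u}_0)$ by monotone convergence. This yields \eqref{ein}.

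Finally, the uniform bounds \eqref{est0} and \eqref{estmu} follow from \eqref{ein}. Since $\widehat{\beta},\widehat{\beta}_\Gamma\ge0$ (their minima are normalized to $0$ by (A1)) and $\widehat{\pi},\widehat{\pi}_\Gamma$ are bounded below on $[-1,1]$, \eqref{ein} bounds $\|\nabla u(t)\|_H^2 + \|\nabla_\Gamma u_\Gamma(t)\|_{H_\Gamma}^2$ uniformly in $t$; combined with the mass conservation \eqref{masscov} (which fixes $m(\boldsymbol{u}(t))=m_0$) and the generalized Poincar\'e inequality \eqref{poin}, this controls $\|\boldsymbol{u}(t)\|_{\boldsymbol{V}}$, giving the first bound in \eqref{est0}; the time-integral bound in \eqref{est0} is the remaining term in \eqref{ein}. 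For \eqref{estmu}, I would use the weak formulation \eqref{p3}: choosing $\boldsymbol{z}\in\mathcal{V}_{\sigma,0}$ gives $a_\sigma(\boldsymbol{\mu}(t),\boldsymbol{z}) = -\langle\boldsymbol{u}'(t),\boldsymbol{z}\rangle$, i.e. $\boldsymbol{A}_\sigma\boldsymbol{P}\boldsymbol{\mu}(t) = -\boldsymbol{u}'(t)$ in $\mathcal{V}_{\sigma,0}^*$, whence $\|\boldsymbol{P}\boldsymbol{\mu}(t)\|_{\mathcal{V}_{\sigma,0}}^2 = \langle\boldsymbol{A}_\sigma\boldsymbol{P}\boldsymbol{\mu}(t),\boldsymbol{P}\boldsymbol{\mu}(t)\rangle = -\langle\boldsymbol{u}'(t),\boldsymbol{P}\boldsymbol{\mu}(t)\rangle = \|\boldsymbol{u}'(t)\|_{\mathcal{V}_{\sigma,0}^*}^2$ by definition of the dual norm via $\boldsymbol{A}_\sigma^{-1}$; integrating in time and using the bound just obtained from \eqref{ein} gives \eqref{estmu}. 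The main obstacle is the careful justification at the approximate level that the pairing of \eqref{ap2} against $\boldsymbol{v}_\varepsilon'$ reproduces $\frac{d}{dt}E_\varepsilon(\boldsymbol{u}_\varepsilon)$ exactly — this requires the chain rule for $t\mapsto\varphi(\boldsymbol{v}_\varepsilon(t))$ along the regularity $\boldsymbol{v}_\varepsilon\in H^1(0,T;\boldsymbol{H}_0)\cap L^2(0,T;\boldsymbol{W}_0)$ and for the $C^1$ functions $\widehat{\beta}_\varepsilon,\widehat{\pi}$ — and then the lower-semicontinuity/Fatou bookkeeping in the limit, all of which is standard but must be stated precisely; everything downstream is routine.
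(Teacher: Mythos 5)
Your proposal is correct and follows essentially the same route as the paper: derive the energy identity for the viscous approximation \eqref{ap1}--\eqref{ap3} by pairing with $\boldsymbol{\mu}_\varepsilon$ and using the chain rule for $\varphi$ and the convex potentials, pass to the limit via lower semicontinuity and Fatou-type arguments for the Yosida approximations, then obtain \eqref{est0} from the lower bound on the potentials together with mass conservation and the generalized Poincar\'e inequality, and \eqref{estmu} by comparison in the equation (the identity $\boldsymbol{A}_\sigma\boldsymbol{P}\boldsymbol{\mu}=-\boldsymbol{u}'$ giving $\|\boldsymbol{P}\boldsymbol{\mu}\|_{\mathcal{V}_{\sigma,0}}=\|\boldsymbol{u}'\|_{\mathcal{V}_{\sigma,0}^*}$). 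The only cosmetic differences are that you drop the nonnegative viscous term rather than tracking its vanishing liminf, and you invoke $\widehat{\beta},\widehat{\beta}_\Gamma\ge 0$ directly instead of the combined bound \eqref{c1c2}; neither affects the argument.
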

\begin{proof}
The conclusion can be draw by working with the approximate solutions of \eqref{ap1}--\eqref{ap3} and then passing to the limit. Using the fact
$\boldsymbol{v}_\varepsilon' \in L^2(0,T;\boldsymbol{H}_0)$,
the chain rule of the subdifferential (see, e.g., \cite[Lemma 4.3, Section IV]{Sho97}) and \eqref{inner}, we see that
$\varphi (\boldsymbol{v}_\varepsilon (\cdot ))$ is absolutely continuous on $[0,T]$ and
\begin{align}
	& \int_{0}^{t}
	\bigl \| \boldsymbol{u}_\varepsilon '(s) \bigr \|_{\mathcal{V}_{\sigma,0}^*}^2
	ds \nonumber \\
	& \quad = \int_{0}^{t}
	\bigl \langle \boldsymbol{v}_\varepsilon '(s),
	\boldsymbol{A}_\sigma^{-1}\bigl(
	-\boldsymbol{A}_\sigma \boldsymbol{P} \boldsymbol{\mu }_\varepsilon (s)
	\bigr)
	\bigr \rangle_{\mathcal{V}_{\sigma,0}^*,\mathcal{V}_{\sigma,0}} ds \nonumber \\
	& \quad = -\int_{0}^{t}
	\bigl( \boldsymbol{v}_\varepsilon '(s), \boldsymbol{\mu }_\varepsilon (s)
	\bigr)_{\boldsymbol{H}} ds \nonumber \\
	& \quad = -  \int_{0}^{t}
	\bigl( \boldsymbol{v}_\varepsilon '(s), \varepsilon \boldsymbol{v}_\varepsilon '(s)
	+ \partial \varphi \bigl( \boldsymbol{v}_\varepsilon (s) \bigr)
	\bigr)_{\boldsymbol{H}_0} ds
	- \int_{0}^{t}
	\bigl( \boldsymbol{u}_\varepsilon '(s),
	\boldsymbol{\beta }_\varepsilon \bigl( \boldsymbol{u}_\varepsilon (s) \bigr)
	+ \boldsymbol{\pi}\bigl( \boldsymbol{u}_\varepsilon (s) \bigr)
	\bigr)_{\boldsymbol{H}} ds \nonumber \\
	& \quad = - \varepsilon \int_{0}^{t}
	\bigl \| \boldsymbol{v}_\varepsilon '(s)
	\bigr \|_{\boldsymbol{H}_0}^2 ds
	- \int_{0}^{t} \frac{d}{ds} E_\varepsilon\bigl( \boldsymbol{u}_\varepsilon (s) \bigr) ds	\nonumber \\
	& \quad = - \varepsilon \int_{0}^{t}
	\bigl\| \boldsymbol{v}_\varepsilon '(s)
	\bigr\|_{\boldsymbol{H}_0}^2 ds
	- E_\varepsilon\bigl( \boldsymbol{u}_\varepsilon (t) \bigr) + E_\varepsilon(\boldsymbol{u}_0),
	\label{einap}
\end{align}
for all $t \in [0,T]$, where 
\begin{align}
	E_\varepsilon(\boldsymbol{z})&:=
	\frac{1}{2} \int_{\Omega }^{} |\nabla z|^2 dx
	+
	\frac{1}{2} \int_{\Gamma}^{} |\nabla_\Gamma z_\Gamma |^2 d\Gamma
	+
	\int_{\Omega }^{} \bigl (\widehat{\beta}_\varepsilon(z)+ \widehat{\pi}(z) 
	 \bigr) dx\nonumber\\
	&\quad\ \  +
	\int_{\Gamma }^{} \bigl( 
	\widehat{\beta}_{\Gamma,\varepsilon}(z_\Gamma ) 
	+ \widehat{\pi}_\Gamma(z_\Gamma )
	\bigr) d\Gamma,
	\label{enea}
\end{align}
with $\widehat{\beta}_\varepsilon(r)=\int_0^r\beta_\varepsilon(s)ds$, $\widehat{\beta}_{\Gamma,\varepsilon}(r)=\int_0^r\beta_{\Gamma,\varepsilon}(s)ds$.

Using the fact of weak and strong convergences (see, \cite[Section~4.3]{CF15} for $\sigma>0$ and the case $\sigma=0$ can be treated similarly by changing the corresponding function spaces)
\begin{align*}
	\boldsymbol{v}_\varepsilon \to \boldsymbol{v} \quad 
	& {\rm weakly~in~}H^1(0,T;\mathcal{V}_{\sigma,0}^*),
	\quad \text{with } \boldsymbol{v}=\boldsymbol{u}-m_0 \boldsymbol{1},
	\\
	\varepsilon \boldsymbol{v}_\varepsilon \to \boldsymbol{0} \quad 
	& {\rm strongly~in~}H^1(0,T;\boldsymbol{H}_0),
	\\
	\boldsymbol{u}_\varepsilon \to \boldsymbol{u} \quad 
	& {\rm strongly~in}~C\bigl( [0,T];\boldsymbol{H} \bigr) \cap L^\infty (0,T;\boldsymbol{V}),
\end{align*}
 the lower semicontinuity of norms and the maximal monotonicity of $\beta$, $\beta_\Gamma$, we obtain
\begin{align*}
	&\liminf_{\varepsilon \to 0}
	\int_{0}^{t} 
	\bigl \| \boldsymbol{u}_\varepsilon' (s)
	\bigr \|_{\mathcal{V}_{\sigma,0}^*}^2ds
	\ge
	\int_{0}^{t} 
	\bigl \| \boldsymbol{u}'(s) 
	\bigr \|_{\mathcal{V}_{\sigma,0}^*}^2 ds, \\
	&\liminf_{\varepsilon \to 0} \varepsilon
	\int_{0}^{t} 
	\bigl \| \boldsymbol{v}_\varepsilon' (s) 
	\bigr \|_{\boldsymbol{H}_0}^2ds
	=0, \quad {\rm for~all~} t \in [0,T],\\
	&\liminf_{\varepsilon \to 0}
	E_\varepsilon\bigl( \boldsymbol{u}_\varepsilon (t) \bigr)
	=\lim_{\varepsilon \to 0}
	E_\varepsilon\bigl( \boldsymbol{u}_\varepsilon (t) \bigr)
	= E\bigl( \boldsymbol{u}(t) \bigr), 
	\quad {\rm for~a.a.\ } t \in (0,T).
\end{align*}
Therefore, taking $\liminf$ as $\varepsilon \to 0$ in \eqref{einap} (noting that $T>0$ is arbitrary), we conclude the energy inequality \eqref{ein}.
We recall that from assumptions (A1)--(A2), there exist a positive constants $c_3$ such that the primitives $\widehat{\beta }$ and $\widehat{\beta }_\Gamma$ satisfy
\begin{equation}
	\widehat{\beta }(r)+ \widehat{\pi}(r)\ge -c_3, \quad \widehat{\beta }_\Gamma(r)+\widehat{\pi}_\Gamma(r)\ge -c_3, \quad {\rm for~all~} r \in [-1,1]. 
	\label{c1c2}
\end{equation}
Hence, we have
\begin{equation}
	E\bigl( \boldsymbol{u}(t) \bigr) 
	\ge \frac{1}{2} \int_{\Omega }^{} \bigl| \nabla u(t) \bigr|^2 dx
	+
	\frac{1}{2} \int_{\Gamma}^{} \bigl| \nabla_\Gamma  u_\Gamma (t) \bigr|^2 d\Gamma
	-c_3\bigl( |\Omega | + |\Gamma | \bigr),
	\nonumber
\end{equation}
for a.a.\ $t \ge 0$. Therefore, recalling \eqref{masscov} and the generalized Poincar\'{e} inequality \eqref{poin}, we obtain
\begin{equation*}
	\int_{0}^{t} \bigl\|\boldsymbol{u}'(s) \bigr\|_{\mathcal{V}_{\sigma,0}^*}^2 ds
	\le E(\boldsymbol{u}_0)+ c_3\bigl( |\Omega | + |\Gamma | \bigr)
\end{equation*}
and 
\begin{align*}
	\bigl \| \boldsymbol{u}(t) \bigr \|_{\boldsymbol{V}}^2
	&= \bigl \| \boldsymbol{u}(t) \bigr \|_{\boldsymbol{H}}^2
	+ \int_{\Omega }^{} \bigl| \nabla u(t) \bigr|^2 dx  +
	\int_{\Gamma}^{} \bigl| \nabla_\Gamma  u_\Gamma (t) \bigr|^2 d\Gamma\nonumber\\
	&= \bigl \| \boldsymbol{u}(t) - 
	m \bigl ( \boldsymbol{u}(t) \bigr ) \bigr \|_{\boldsymbol{H}}^2
	+ \bigl( |\Omega|+|\Gamma| \bigr) \bigl | m \bigl ( \boldsymbol{u}(t) \bigr ) \bigr |^2 
	+ \bigl \| \boldsymbol{u}(t)-m \bigl( \boldsymbol{u}(t) \bigr) 
	\bigr \|_{\boldsymbol{V}_0}^2\nonumber\\
	&\leq (C_{\rm P}+1) \bigl \|\boldsymbol{u}(t)-m_0\boldsymbol{1} \bigr \|_{\boldsymbol{V}_0}^2 
	+ \bigl( |\Omega|+|\Gamma| \bigr) |m_0|^2\nonumber\\
	&\leq 2(C_{\rm P}+1)E(\boldsymbol{u}_0)+ 2(C_{\rm P}+1)c_3\bigl( |\Omega | + |\Gamma | \bigr)+ 
	\bigl( |\Omega|+|\Gamma| \bigr) |m_0|^2,
\end{align*}
for a.a.\ $t \ge 0$.
Since the right hand side is independent of $t$, then using
the {L}ebesugue monotone convergence theory, we obtain the estimate \eqref{est0}.
On the other hand, by the comparison of the equation \eqref{ap1}, we also
get \eqref{estmu}. 
\end{proof}

Next, we show that the weak solutions to problem \eqref{GMS} satisfy an energy equality, which is a standard structure of the {C}ahn--{H}illiard system.
\begin{lemma}\label{energyeq}
For any $\eta>0$, the mapping $t \mapsto E(\boldsymbol{u}(t))$ is absolutely continuous for all $t \ge \eta$ and
\begin{equation}
	\frac{d}{dt} E\bigl( \boldsymbol{u}(t) \bigr)
	+  \bigl\| \boldsymbol{u}'(t) \bigr \| _{\mathcal{V}_{\sigma,0}^*}^2
	= 0,\quad \text{for a.a.\ }t \ge \eta.
	\label{equality}
\end{equation}
\end{lemma}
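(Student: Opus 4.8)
The plan is to upgrade the energy inequality \eqref{ein} to an equality on $[\eta, \infty)$ by establishing enough regularity for the weak solution $\boldsymbol{u}$ so that the chain rule can be applied rigorously. The key observation is that on any interval $[\eta, \infty)$ with $\eta>0$, the weak solution enjoys improved time regularity: since $\boldsymbol{u}' \in L^2(0,\infty;\mathcal{V}_{\sigma,0}^*)$ from \eqref{est0} and, by parabolic smoothing for the viscous approximations (passing to the limit in estimates differentiated in time, or using a standard bootstrap on the system \eqref{p3}--\eqref{p5}), one has $\boldsymbol{u} \in L^\infty(\eta, \infty; \boldsymbol{W})$ together with $\boldsymbol{u}' \in L^2(\eta, T; \boldsymbol{V}_0)$ for every $T$, and $(\beta(u), \beta_\Gamma(u_\Gamma)) \in L^\infty(\eta, \infty; \boldsymbol{H})$. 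First I would record these regularity gains, citing the a priori estimates that are uniform in $\varepsilon$ for the approximating problem \eqref{ap1}--\eqref{ap3} (differentiating \eqref{ap1} formally in time and testing appropriately, exactly as in \cite{CF15}), and noting that $t=\eta>0$ is an arbitrary positive time so no compatibility at $t=0$ is needed.

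With this regularity in hand, the second step is to justify the differentiation of $t \mapsto E(\boldsymbol{u}(t))$. Write $E(\boldsymbol{u}) = \varphi(\boldsymbol{v}) + \int_\Omega(\widehat{\beta}(u)+\widehat{\pi}(u))\,dx + \int_\Gamma(\widehat{\beta}_\Gamma(u_\Gamma)+\widehat{\pi}_\Gamma(u_\Gamma))\,d\Gamma$ with $\boldsymbol{v} = \boldsymbol{u}-m_0\boldsymbol{1}$. For the quadratic gradient part $\varphi(\boldsymbol{v})$, the chain rule for subdifferentials (e.g.\ \cite[Lemma~4.3, Section~IV]{Sho97}) applies because $\boldsymbol{v}' \in L^2(\eta,T;\boldsymbol{H}_0)$ and $\boldsymbol{v}(t) \in D(\partial\varphi)$, giving $\frac{d}{dt}\varphi(\boldsymbol{v}) = (\partial\varphi(\boldsymbol{v}), \boldsymbol{v}')_{\boldsymbol{H}_0}$. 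For the convex potential parts, since $\beta(u), \beta_\Gamma(u_\Gamma) \in L^2_{\mathrm{loc}}(\eta,\infty;\boldsymbol{H})$ and $u' \in L^2_{\mathrm{loc}}(\eta,\infty;H)$ (similarly on $\Gamma$), the standard chain rule for convex functions with the $L^1$ integrability $\widehat{\beta}(u_0)\in L^1(\Omega)$ propagated forward yields $\frac{d}{dt}\int_\Omega \widehat{\beta}(u) = \int_\Omega \beta(u)u'$, and likewise for $\widehat{\beta}_\Gamma$; the $C^1$ perturbations $\widehat{\pi},\widehat{\pi}_\Gamma$ are handled trivially by the Lipschitz bound (A3). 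Summing, and using \eqref{p4}--\eqref{p5} to identify $\boldsymbol{\mu} = (-\Delta u + \beta(u)+\pi(u),\, \partial_{\boldsymbol{\nu}}u - \Delta_\Gamma u_\Gamma + \beta_\Gamma(u_\Gamma)+\pi_\Gamma(u_\Gamma)) = \partial\varphi(\boldsymbol{v}) + (\beta(u),\beta_\Gamma(u_\Gamma)) + (\pi(u),\pi_\Gamma(u_\Gamma))$ in $\boldsymbol{H}$, we obtain
\begin{equation*}
	\frac{d}{dt} E\bigl(\boldsymbol{u}(t)\bigr) = \bigl(\boldsymbol{\mu}(t), \boldsymbol{u}'(t)\bigr)_{\boldsymbol{H}}, \quad \text{for a.a.\ } t\ge\eta.
\end{equation*}

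The final step is to rewrite the right-hand side using the weak formulation \eqref{p3}. Since $\boldsymbol{u}'(t) \in \boldsymbol{H}_0$ has zero mean by mass conservation \eqref{masscov}, one has $(\boldsymbol{\mu}(t),\boldsymbol{u}'(t))_{\boldsymbol{H}} = (\boldsymbol{P}\boldsymbol{\mu}(t),\boldsymbol{u}'(t))_{\boldsymbol{H}} = \langle \boldsymbol{u}'(t), \boldsymbol{P}\boldsymbol{\mu}(t)\rangle_{\mathcal{V}_{\sigma,0}^*,\mathcal{V}_{\sigma,0}}$, and testing \eqref{p3} with $\boldsymbol{z} = \boldsymbol{A}_\sigma^{-1}\boldsymbol{u}'(t) \in \mathcal{V}_{\sigma,0}$ (recalling $\boldsymbol{A}_\sigma$ is an isomorphism on $\mathcal{V}_{\sigma,0}$) gives $\langle \boldsymbol{u}'(t), \boldsymbol{A}_\sigma^{-1}\boldsymbol{u}'(t)\rangle + a_\sigma(\boldsymbol{\mu}(t), \boldsymbol{A}_\sigma^{-1}\boldsymbol{u}'(t)) = 0$, i.e.\ $\|\boldsymbol{u}'(t)\|_{\mathcal{V}_{\sigma,0}^*}^2 + \langle \boldsymbol{u}'(t), \boldsymbol{P}\boldsymbol{\mu}(t)\rangle_{\mathcal{V}_{\sigma,0}^*,\mathcal{V}_{\sigma,0}} = 0$. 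Combining the last two displays yields \eqref{equality}, and integrating this identity in time shows $t\mapsto E(\boldsymbol{u}(t))$ is absolutely continuous on $[\eta,\infty)$. I expect the main obstacle to be the justification of the chain rule for the singular term $\int_\Omega \widehat{\beta}(u)$: one must verify that the required time regularity ($u'\in L^2$ in space, $\beta(u)\in L^2$ in space, locally in time on $(\eta,\infty)$) genuinely follows from the $\varepsilon$-uniform estimates for the viscous problem after passing to the limit, and that the $L^1$-control of $\widehat\beta(u(t))$ does not degenerate — this is precisely where the gain of regularity away from $t=0$ (parabolic smoothing) is essential, and where care is needed since the separation property of Theorem \ref{septhm} is not yet available at this stage of the paper.
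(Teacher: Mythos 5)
Your proposal is correct and follows essentially the same route as the paper: the key step in both is to gain $\boldsymbol{u}'\in L^\infty(\eta,\infty;\mathcal{V}_{\sigma,0}^*)\cap L^2_{\mathrm{loc}}(\eta,\infty;\boldsymbol{V})$ by taking time difference quotients of the viscous approximation and applying the uniform Gronwall lemma (no compatibility at $t=0$ being needed precisely because $\eta>0$), and then to apply the chain rule for $\partial\varphi$ and the convex potentials and identify $(\boldsymbol{\mu},\boldsymbol{u}')_{\boldsymbol{H}}$ with $-\|\boldsymbol{u}'\|_{\mathcal{V}_{\sigma,0}^*}^2$ via the weak form tested with $\boldsymbol{A}_\sigma^{-1}\boldsymbol{u}'$. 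The only cosmetic difference is that you invoke $L^\infty(\eta,\infty;\boldsymbol{W})$ and $L^\infty$-in-time control of $\beta(u)$, which the paper derives only afterwards (Lemmas 3.4--3.5) and which are not actually needed here, since $\boldsymbol{u}'\in L^2_{\mathrm{loc}}(\eta,\infty;\boldsymbol{H})$ together with \eqref{p2a} already justifies the chain rule for the singular term.
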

\begin{proof} 
Firstly, for any given $\eta>0$, we show that there exists a positive constant $M_2$ such that
\begin{equation}
	\|\boldsymbol{u}'\|_{L^\infty (\eta,\infty ;\mathcal{V}_{\sigma,0}^*)}+
	\int_{t}^{t+1}  \bigl \| \boldsymbol{u}'(s) \bigr \|_{\boldsymbol{V}}^2 ds\le M_2,
	\quad \text{for all }t \ge \eta.
	\label{est32}
\end{equation}
Hereafter, for each $h>0$, we use the symbol of difference quotient
$\partial _t^h \boldsymbol{v}(t):=(\boldsymbol{v}(t+h)-\boldsymbol{v}(t))/h$ with respect to
the time variable $t$. Taking the difference of \eqref{ap1} at $t=s+h$ and $t=s$, we have
\begin{equation*}
	\partial _t^h \boldsymbol{v}_\varepsilon'(s)+\boldsymbol{A}_\sigma \boldsymbol{P}
	\bigl(\partial _t^h \boldsymbol{\mu }_\varepsilon (s) \bigr)=\boldsymbol{0}
	\quad {\rm in~} \mathcal{V}_{\sigma,0}^*.
\end{equation*}
This is equivalent to
\begin{equation}
	\varepsilon \partial _t^h \boldsymbol{v}_\varepsilon '(s)
	+ \boldsymbol{A}^{-1}_\sigma\partial _t^h \boldsymbol{v}_\varepsilon '(s)
	+\partial \varphi \bigl( \partial _t^h \boldsymbol{v}_\varepsilon (s)  \bigr)
	= \boldsymbol{P} \bigl( - \partial _t^h \boldsymbol{\beta }_\varepsilon\bigl( \boldsymbol{u}_\varepsilon (s) \bigr) -  \partial _t^h \boldsymbol{\pi }\bigl( \boldsymbol{u}_\varepsilon (s) \bigr) \bigr)
	\quad {\rm in~} \boldsymbol{H}_0,
	\label{232}
\end{equation}
for a.a.\ $s \ge 0$. Multiplying $\partial _t^h \boldsymbol{v}_\varepsilon (s) \in \boldsymbol{H}_0$ by \eqref{232},
using \eqref{inner}, and applying the Ehrling  lemma given by Lemma~A.1, we obtain that
\begin{align}
	& \frac{\varepsilon }{2} \frac{d}{ds}
	\bigl\| \partial _t^h \boldsymbol{v}_\varepsilon (s) \bigr\|_{\boldsymbol{H}_0}^2
	+ \frac{1}{2} \frac{d}{ds}
	 \bigl\| \partial _t^h \boldsymbol{v}_\varepsilon (s)  \bigr\|_{\mathcal{V}_{\sigma,0}^*}^2
	+
	 \bigl\| \partial _t^h \boldsymbol{v}_\varepsilon (s) \bigr\|_{\boldsymbol{V}_0}^2
	\nonumber \\
	& \quad \le \frac{1}{h^2}
	\bigl( \pi \bigl( u_\varepsilon (s+h) \bigr)
	-\pi \bigl( u_\varepsilon(s) \bigr),
	u_\varepsilon (s+h) - u_\varepsilon(s) \bigr)_H
	\nonumber \\
	& \quad \quad { }+
	 \frac{1}{h^2}
	\bigl( \pi_\Gamma \bigl( u_{\Gamma, \varepsilon} (s+h) \bigr)
	-\pi_\Gamma \bigl( u_{\Gamma, \varepsilon}(s) \bigr),
	u_{\Gamma, \varepsilon}(s+h) - u_{\Gamma, \varepsilon}(s) \bigr)_{H_\Gamma }
	\nonumber \\
	& \quad \le L  \bigl\| \partial _t^h \boldsymbol{v}_\varepsilon (s)  \bigr\|_{\boldsymbol{H}_0}^2
	\nonumber \\
	& \quad \le
	\frac12 \bigl\| \partial _t^h \boldsymbol{v}_\varepsilon (s) \bigr\|_{\boldsymbol{V}_0}^2
	+
	 M_2' \bigl\| \partial _t^h \boldsymbol{v}_\varepsilon (s) \bigr\|_{\boldsymbol{V}_0^*}^2\nonumber\\
	&\quad  \le
	\frac12 \bigl\| \partial _t^h \boldsymbol{v}_\varepsilon (s) \bigr\|_{\boldsymbol{V}_0}^2
	+
	M_2'' \bigl\| \partial _t^h \boldsymbol{v}_\varepsilon (s) \bigr\|_{\mathcal{V}_{\sigma,0}^*}^2,\quad \text{for a.a.\ }s \ge 0,
	\label{gron}
\end{align}
since $\beta $, $\beta_\Gamma$ are monotone. 
The constants $M_2'$ and $M_2''$ in \eqref{gron} may depend on $L$. Then for any fixed $\eta>0$, applying the uniform {G}ronwall type inequality given by
Lemma~A.2 with $t_0:=0$ and $r:=\eta$, we deduce
\begin{align}
	& \varepsilon
	 \bigl \| \partial _t^h \boldsymbol{v}_\varepsilon (t+\eta) \bigr \| _{\boldsymbol{H}_0}^2
	+
	\bigl \|  \partial _t^h \boldsymbol{v}_\varepsilon (t+\eta) \bigr \| _{\mathcal{V}_{\sigma,0}^*}^2
	\nonumber \\
	& \quad \le\frac{1}{\eta}
	\left( \varepsilon
	\int_{t}^{t+\eta} \bigl \|  \partial _t^h \boldsymbol{v}_\varepsilon (s)
	 \bigr \| _{\boldsymbol{H}_0}^2 ds+ 
	\int_{t}^{t+\eta} \bigl \|  \partial _t^h
	\boldsymbol{v}_\varepsilon (s) \bigr \| _{\mathcal{V}_{\sigma,0}^*}^2 ds
	\right)
	e^{2  M_2'' \eta},\quad \text{for all }t \ge 0.
	\label{key}
\end{align}
On the other hand, from the regularity of
$\boldsymbol{v}'_\varepsilon $, we have already seen that
\begin{align*}
	\sqrt{\varepsilon } \partial _t^h \boldsymbol{v}_\varepsilon
	\to \sqrt{\varepsilon } \boldsymbol{v}'_\varepsilon
	\quad & {\rm in~}L^2(t,t+\eta;\boldsymbol{H}_0),
	\\
	\partial _t^h \boldsymbol{v}_\varepsilon
	\to  \boldsymbol{v}'_\varepsilon
	\quad & {\rm in~}L^2(t,t+\eta;\mathcal{V}_{\sigma,0}^*)
\end{align*}
as $h \to 0$, that is, for each $t \ge 0$ and $\alpha >0$, there
exists $h^*:=h^*(\alpha, t)>0$ such that
\begin{equation*}
	\varepsilon
	\int_{t}^{t+\eta} \bigl \|  \partial _t^h \boldsymbol{v}_\varepsilon (s)
	- \boldsymbol{v}'_\varepsilon (s)
	 \bigr \| _{\boldsymbol{H}_0}^2 ds<\frac{\eta\alpha }{4}, 
	\quad
	\int_{t}^{t+\eta} \bigl \|  \partial _t^h
	\boldsymbol{v}_\varepsilon (s)
	-
	\boldsymbol{v}'_\varepsilon (s) \bigr \| _{\mathcal{V}_{\sigma,0}^*}^2 ds
	<\frac{\eta\alpha }{4},
\end{equation*}
for all $h\in(0,h^*)$. 
Moreover, using \eqref{einap} and  \eqref{c1c2}  we have in the right hand side of \eqref{key}
\begin{align}
	&
	\frac{\varepsilon}{\eta}
	\int_{t}^{t+\eta} \bigl \| \partial _t^h \boldsymbol{v}_\varepsilon (s)
	 \bigr \| _{\boldsymbol{H}_0}^2 ds + \frac{1}{\eta}
	\int_{t}^{t+\eta} \bigl \| \partial _t^h
	\boldsymbol{v}_\varepsilon (s)  \bigr \| _{\mathcal{V}_{\sigma,0}^*}^2 ds
	\nonumber \\
	&
	\quad \le
	\frac{2\varepsilon}{\eta}
	\int_{t}^{t+\eta} \bigl \|  \boldsymbol{v}'_\varepsilon (s)
	 \bigr \| _{\boldsymbol{H}_0}^2 ds
	+ \frac{2}{\eta}
	\int_{t}^{t+\eta} \bigl \|
	\boldsymbol{v}_\varepsilon' (s)  \bigr \|_{\mathcal{V}_{\sigma,0}^*}^2 ds
	+\alpha
	\nonumber \\
	& \quad \le
	\frac{2}{\eta} \Bigl[
	E_\varepsilon(\boldsymbol{u}_0) -E_\varepsilon\bigl( \boldsymbol{u}_\varepsilon (t+\eta) \bigr)
	\Bigr] +\alpha
	\nonumber \\
	& \quad \le
	\frac{2}{\eta}
	\Bigl[ 
	E_\varepsilon(\boldsymbol{u}_0)	+ c_3\bigl( |\Omega | + |\Gamma | \bigr) 
	\Bigr] 
	+\alpha,
		\label{est1}
\end{align}
for all $h\in(0,h^*)$.
Recalling \eqref{key} and changing the variable $\tau :=t+\eta$
\begin{align*}
	&\varepsilon
	 \bigl \| \partial _t^h \boldsymbol{v}_\varepsilon (\tau )  \bigr \|_{\boldsymbol{H}_0}^2
	+
	 \bigl \| \partial _t^h \boldsymbol{v}_\varepsilon (\tau )  \bigr \|_{\mathcal{V}_{\sigma,0}^*}^2\nonumber\\
	&\quad \le \left\{ \frac{2}{\eta}
	\Bigl[E_\varepsilon(\boldsymbol{u}_0)	+ c_3\bigl( |\Omega | + |\Gamma | \bigr)\Bigr]
	+\alpha  \right\}
	e^{2  M_2''\eta},\quad \text{for all }\tau \ge \eta.
\end{align*}
for all $h\in(0,h^*)$, where the right hand side is independent of $h$.
Thus, letting $h \to 0$, we see that
\begin{align*}
	\sqrt{\varepsilon } \partial _t^h \boldsymbol{v}_\varepsilon
	\to \sqrt{\varepsilon } \boldsymbol{v}'_\varepsilon
	&\quad {\rm weakly~star~in~}L^\infty (\eta,\infty ;\boldsymbol{H}_0),
	\\
	\partial _t^h \boldsymbol{v}_\varepsilon
	\to  \boldsymbol{v}'_\varepsilon
	&\quad {\rm weakly~star~in~}L^\infty (\eta,\infty ;\mathcal{V}_{\sigma,0}^*)
\end{align*}
as $h \to 0$ with the following estimate
\begin{equation}
	\varepsilon
	 \bigl \| \boldsymbol{v}_\varepsilon' (\tau )  \bigr \|_{\boldsymbol{H}_0}^2
	+
	 \bigl \| \boldsymbol{v}_\varepsilon' (\tau )  \bigr \|_{\mathcal{V}_{\sigma,0}^*}^2
	\le \left\{\frac{2}{\eta}
	\Bigl[E_\varepsilon(\boldsymbol{u}_0)	+ c_3\bigl( |\Omega | + |\Gamma | \bigr)\Bigr]
	+\alpha  \right\}
	e^{2  M_2''\eta}
	\label{bound}
\end{equation}
for all $\tau \ge \eta$. Since $\lim_{\varepsilon\to 0}E_\varepsilon(\boldsymbol{u}_0)=E(\boldsymbol{u}_0)$, 
the right hand side of \eqref{bound} 
can be bounded by a constant $M_2'''$ independent of $\varepsilon $. This implies that as $\varepsilon \to 0$ it holds 
\begin{align*}
	\sqrt{\varepsilon } \boldsymbol{v}'_\varepsilon
	\to \boldsymbol{0} \quad 
	& {\rm strongly~in~}L^\infty (\eta,\infty ;\boldsymbol{H}_0),
	\\
	\boldsymbol{v}'_\varepsilon
	\to \boldsymbol{v}' \quad 
	&{\rm weakly~star~in~}L^\infty (\eta,\infty ;\mathcal{V}_{\sigma,0}^*)
\end{align*}
 with the same estimate
\begin{equation}
	 \bigl \| \boldsymbol{u}' (\tau )  \bigr \|_{\mathcal{V}_{\sigma,0}^*}^2
	=
	\bigl \| \boldsymbol{v}' (\tau )  \bigr \|_{\mathcal{V}_{\sigma,0}^*}^2
	\le  M_2''',\quad \text{for all }\tau \ge \eta.
	\label{fe}
\end{equation}
Finally, integrating
\eqref{gron} over $[t,t+1]$ with respect to time $s$,
we get for each $t \ge \eta$
\begin{align*}
	 \int_{t}^{t+1}
	 \bigl \| \partial _t^h \boldsymbol{v}_\varepsilon (s)  \bigr \|_{\boldsymbol{V}_0}^2
	ds
	&  \le
	\varepsilon
	 \bigl \| \partial _t^h \boldsymbol{v}_\varepsilon (t) \bigr \|_{\boldsymbol{H}_0}^2
	+
	 \bigl \| \partial _t^h \boldsymbol{v}_\varepsilon (t)  \bigr \|_{\mathcal{V}_{\sigma,0}^*}^2
	+
	2   M_2''\int_{t}^{t+1}
	 \bigl \| \partial _t^h \boldsymbol{v}_\varepsilon (s)  \bigr \|_{\mathcal{V}_{\sigma,0}^*}^2 ds
	\nonumber \\
	&  \le \left\{ \Bigl( \frac{2}{\eta} + 2   M_2'' \Bigr) 
	 \Bigl[ E_\varepsilon(\boldsymbol{u}_0)
	+c_3\bigl( |\Omega | + |\Gamma | \bigr) \Bigr]+\alpha  \right\}
	e^{2   M_2''\eta}
\end{align*}
for all $h\in(0,h^*)$.
Thus, letting $h \to 0$ and $\varepsilon \to 0$ again, 
we see that
\begin{equation}
	\int_{t}^{t+1}
	 \bigl \| \boldsymbol{u}' (s)  \bigr \|_{\boldsymbol{V}}^2
	ds 
	\le C_{\rm P}
	\int_{t}^{t+1}
	 \bigl \| \boldsymbol{v}' (s)  \bigr \|_{\boldsymbol{V}_0}^2
	ds \le C_{\rm P}\left( \frac{2}{\eta} + 2   M_2''\right)    M_2''',
	\quad \text{for all } t \ge \eta.
	\label{se}
\end{equation}
 Combining \eqref{fe} and \eqref{se} we get the estimate \eqref{est32}.

By virtue of the {S}obolev embedding theorem, we infer from \eqref{est0} and \eqref{est32} the additional continuity 
\begin{equation}
	\boldsymbol{u} \in C  \bigl( [\eta,\infty  ) ;\boldsymbol{V}\bigr).
	\label{addconti}
\end{equation}

Hence, the mapping $t \mapsto E(\boldsymbol{u}(t))$ is continuous for all $t \ge \eta$.
Moreover, going back to the proof of Lemma~\ref{basicene}, we have
\begin{align*}
	&-\int_{s}^{t}
	 \bigl \| \boldsymbol{u} '(\tau )  \bigr \|_{\mathcal{V}_{\sigma,0}^*}^2
	d\tau\\
	& \quad = \int_{s}^{t}
	\bigl( \boldsymbol{v} '(\tau ), \boldsymbol{\mu } (\tau )
	\bigr)_{\boldsymbol{H}} d\tau  \nonumber \\
	& \quad  =  \int_{s}^{t}
	\bigl( \boldsymbol{v} '(\tau ),
	\partial \varphi \bigl( \boldsymbol{v}(\tau ) \bigr)
	\bigr)_{\boldsymbol{H}_0} d\tau
	+ \int_{s}^{t}
	\bigl( \boldsymbol{u}'(\tau ),
	\boldsymbol{\beta }\bigl( \boldsymbol{u} (\tau ) \bigr)
	+ \boldsymbol{\pi}\bigl( \boldsymbol{u} (\tau ) \bigr)
	\bigr)_{\boldsymbol{H}} d\tau  \nonumber \\
	& \quad =
	\int_{s}^{t} \frac{d}{d\tau } E\bigl( \boldsymbol{u} (\tau) \bigr) d\tau
	\nonumber \\
	&\quad  = E\bigl( \boldsymbol{u} (t) \bigr)
	- E\bigl( \boldsymbol{u} (s) \bigr)
	\quad {\rm for~all~} s,t \ge \eta,
\end{align*}
that is, the mapping
$t \mapsto E(\boldsymbol{u}(t))$ is absolutely continuous for all $t \ge \eta$ and
the energy equality \eqref{equality} holds for a.a.\  $t \ge \eta$. 
\end{proof}

\subsection{Higher-order estimates} 
We proceed to derive some higher-order estimates for the weak solution.

\begin{lemma} \label{muV}
For any $\eta>0$, there exists a positive constant $M_3$ such that
\begin{align}
	&  \bigl \| \beta (u)  \bigr \|_{L^\infty (\eta,\infty;L^1(\Omega ))}
	+  \bigl \| \beta_\Gamma (u_\Gamma )  \bigr \|_{L^\infty (\eta,\infty;L^1(\Gamma))} \le M_3,
	\nonumber \\
	& \| \boldsymbol{\mu }\|_{L^\infty (\eta,\infty;\mathcal{V}_\sigma)} \le M_3.
	\label{est33}
\end{align}
\end{lemma}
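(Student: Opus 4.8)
The plan is to combine the pointwise coercivity of the singular nonlinearities near the conserved mean value $m_0$ with the uniform-in-time control of $\boldsymbol{u}'$ already furnished by Lemma~\ref{energyeq}.

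As a preparation, I would first write the chemical potential relations \eqref{p4}--\eqref{p5} in weak form: multiplying \eqref{p4} by an arbitrary $z\in V$, integrating by parts over $\Omega$ and using \eqref{p5} to replace $\partial_{\boldsymbol{\nu}}u$, one obtains for a.a.\ $t$
\begin{equation*}
	(\boldsymbol{\mu}(t),\boldsymbol{z})_{\boldsymbol{H}}
	= a_1\bigl(\boldsymbol{u}(t),\boldsymbol{z}\bigr)
	+\bigl(\beta(u(t))+\pi(u(t)),z\bigr)_{H}
	+\bigl(\beta_\Gamma(u_\Gamma(t))+\pi_\Gamma(u_\Gamma(t)),z_\Gamma\bigr)_{H_\Gamma},
	\quad\text{for all }\boldsymbol{z}\in\boldsymbol{V},
\end{equation*}
which is legitimate because $\boldsymbol{u}(t)\in\boldsymbol{W}$ and $(\beta(u(t)),\beta_\Gamma(u_\Gamma(t)))\in\boldsymbol{H}$ for a.a.\ $t$ by Proposition~\ref{gloweak}. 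Next, restricting the weak formulation \eqref{p3} to test functions in $\mathcal{V}_{\sigma,0}$ and using that $\boldsymbol{A}_\sigma$ is an isomorphism with $a_\sigma(\boldsymbol{z},\boldsymbol{z})=\|\boldsymbol{z}\|_{\mathcal{V}_{\sigma,0}}^2$, one gets $\boldsymbol{A}_\sigma\boldsymbol{P}\boldsymbol{\mu}(t)=-\boldsymbol{u}'(t)$ in $\mathcal{V}_{\sigma,0}^*$, hence $\|\boldsymbol{P}\boldsymbol{\mu}(t)\|_{\mathcal{V}_{\sigma,0}}\le\|\boldsymbol{u}'(t)\|_{\mathcal{V}_{\sigma,0}^*}$. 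Together with \eqref{fe} this already yields $\|\boldsymbol{P}\boldsymbol{\mu}\|_{L^\infty(\eta,\infty;\mathcal{V}_{\sigma,0})}\le C$, in particular $\|\boldsymbol{P}\boldsymbol{\mu}(t)\|_{\boldsymbol{H}}\le C$ for a.a.\ $t\ge\eta$.

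The heart of the matter is then to test the weak form above with $\boldsymbol{z}=\boldsymbol{u}(t)-m_0\boldsymbol{1}\in\boldsymbol{V}_0$. The left-hand side becomes $(\boldsymbol{P}\boldsymbol{\mu}(t),\boldsymbol{u}(t)-m_0\boldsymbol{1})_{\boldsymbol{H}}$, which is bounded by a constant times $M_1$ thanks to the previous step and \eqref{est0}; on the right-hand side $a_1(\boldsymbol{u},\boldsymbol{u}-m_0\boldsymbol{1})=\int_\Omega|\nabla u|^2\,dx+\int_\Gamma|\nabla_\Gamma u_\Gamma|^2\,d\Gamma\ge0$ and the $\pi$, $\pi_\Gamma$ terms are controlled via (A3) and \eqref{est0}. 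For the singular terms I would invoke the classical pointwise inequality, valid since $m_0\in(-1,1)=\mathrm{int}\,D(\beta)=\mathrm{int}\,D(\beta_\Gamma)$: there exist $c_4>0$ and $c_5\ge0$, depending only on $m_0$, $\beta$ and $\beta_\Gamma$, such that
\begin{equation*}
	\beta(r)(r-m_0)\ge c_4|\beta(r)|-c_5,\qquad
	\beta_\Gamma(r)(r-m_0)\ge c_4|\beta_\Gamma(r)|-c_5,\qquad\text{for all }r\in(-1,1)
\end{equation*}
(see, e.g., \cite{MZ04} and \cite[Section~3]{GMS11}). Integrating these over $\Omega$ and $\Gamma$ respectively and rearranging gives $\|\beta(u(t))\|_{L^1(\Omega)}+\|\beta_\Gamma(u_\Gamma(t))\|_{L^1(\Gamma)}\le C$ for a.a.\ $t\ge\eta$, which is the first estimate in \eqref{est33}. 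Finally, choosing $\boldsymbol{z}=\boldsymbol{1}$ in the weak form kills the gradient contributions and identifies $m(\boldsymbol{\mu}(t))$ as a fixed multiple of $\int_\Omega(\beta(u)+\pi(u))\,dx+\int_\Gamma(\beta_\Gamma(u_\Gamma)+\pi_\Gamma(u_\Gamma))\,d\Gamma$, so $|m(\boldsymbol{\mu}(t))|\le C$ for a.a.\ $t\ge\eta$ by what precedes and (A3). Writing $\boldsymbol{\mu}=\boldsymbol{P}\boldsymbol{\mu}+m(\boldsymbol{\mu})\boldsymbol{1}$ and combining with the bound on $\boldsymbol{P}\boldsymbol{\mu}$ gives $\|\boldsymbol{\mu}\|_{L^\infty(\eta,\infty;\mathcal{V}_\sigma)}\le C$; taking $M_3$ to be the maximum of the constants obtained along the way finishes the proof.

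The genuinely delicate point is the pointwise inequality for the singular terms: one must check that its constants remain finite and, crucially, are unaffected by $u$, $u_\Gamma$ approaching the pure phases $\pm1$. This is exactly what the sign of $\beta(r)(r-m_0)$ near $r=\pm1$ guarantees, since choosing $c_4$ smaller than $\min\{1-m_0,1+m_0\}$ makes the product of the two factors positive and divergent there, while on any compact subset of $(-1,1)$ the nonlinearities are bounded and absorbed into $c_5$. Everything else is routine bookkeeping with the bilinear forms, the projection $\boldsymbol{P}$, and the generalized Poincar\'e inequality \eqref{poin}; note in particular that assumption (A2) is not needed for this lemma, as the pointwise bound is applied to $\beta$ and $\beta_\Gamma$ separately.
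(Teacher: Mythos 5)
Your proposal is correct and follows essentially the same route as the paper: bound $\boldsymbol{P}\boldsymbol{\mu}$ in $\mathcal{V}_{\sigma,0}$ through the equation and the $L^\infty$-in-time bound on $\boldsymbol{u}'$ from Lemma~\ref{energyeq}, then test with $\boldsymbol{u}-m_0\boldsymbol{1}$ and use the coercivity inequalities $\beta(r)(r-m_0)\ge c_4|\beta(r)|-c_5$, $\beta_\Gamma(r)(r-m_0)\ge c_4|\beta_\Gamma(r)|-c_5$ to control $\|\beta(u)\|_{L^1(\Omega)}+\|\beta_\Gamma(u_\Gamma)\|_{L^1(\Gamma)}$ and hence $|m(\boldsymbol{\mu})|$, finishing with the generalized Poincar\'e inequality. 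The only cosmetic difference is that the paper tests \eqref{p3} with $\boldsymbol{A}_\sigma^{-1}\boldsymbol{v}(s)$ rather than testing the $\boldsymbol{\mu}$-equation with $\boldsymbol{v}(s)$, which is the same computation by duality, and your observation that (A2) is not actually needed at this stage is accurate.
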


\begin{proof}
 From the comparison in equation \eqref{ap1} with \eqref{est32},
we see that
\begin{equation}
	\| \boldsymbol{P} \boldsymbol{\mu }
	\|_{L^\infty (\eta,\infty; \mathcal{V}_{\sigma,0})} =
	\|  \boldsymbol{v}' \|_{L^\infty (\eta,\infty; \mathcal{V}_{\sigma,0}^*)} \le M_2.
	\label{estmu2}
\end{equation}
Next, we estimate the mean value  $m(\boldsymbol{\mu })$ for the chemical potential $\boldsymbol{\mu }$.
Taking $\boldsymbol{z}:=\boldsymbol{A}_\sigma^{-1}\boldsymbol{v}(s)$ in the weak form \eqref{p3} at $t=s$, using
\eqref{inner} and \eqref{p4}--\eqref{p5} we get
\begin{align}
	&\bigl( \boldsymbol{v}'(s), \boldsymbol{v}(s)
	\bigr)_{\mathcal{V}_{\sigma,0}^*}
	+ \bigl \|\boldsymbol{v}(s)  \bigr \|_{\boldsymbol{V}_0}^2
	+ \bigl( \boldsymbol{\beta }\bigl( \boldsymbol{u}(s) \bigr), \boldsymbol{u}(s)-m_0 \boldsymbol{1}
	\bigr)_{\boldsymbol{H}}+ \bigl( \boldsymbol{\pi }\bigl( \boldsymbol{u}(s) \bigr), \boldsymbol{v}(s)\bigr)_{\boldsymbol{H}}= 0,
	\label{uvstar}
\end{align}
for a.a.\ $s\ge 0$. 
From assumption (A1) and (A4), we see that there exists positive constants
$c_4 $ and $c_5$ such that (cf.\ \cite[Proposition A.1]{MZ04}, also \cite[Section~5]{GMS09})
\begin{alignat}{2}
	\beta (r)(r-m_0) & \ge c_4 \bigl| \beta (r) \bigr| -c_5 \quad & {\rm for~all~} & r \in (-1,1),
	\label{gms}\\
	\beta_\Gamma (r)(r-m_0) & \ge c_4 \bigl| \beta_\Gamma (r) \bigr| -c_5 \quad & {\rm for~all~} & r \in (-1,1).\label{gmsa}
\end{alignat}
Then from \eqref{gms}--\eqref{gmsa} we deduce from \eqref{uvstar} and (A2) that
\begin{align*}
	& c_4 \int_{\Omega }^{} \bigl| \beta \bigl( u(s) \bigr)\bigr| dx
	+ c_4 \int_{\Gamma}^{} \bigl| \beta_\Gamma \bigl( u_\Gamma (s) \bigr)\bigr| d\Gamma
	\\
	& \quad
	\le c_5\bigl( |\Omega |+|\Gamma |\bigr)
	+  \bigl\| \boldsymbol{\pi }
	\bigl( \boldsymbol{u}(s) \bigr)  \bigr\|_{\boldsymbol{H}}
	 \bigl\| \boldsymbol{v}(s)
	 \bigr\|_{\boldsymbol{H}_0}
	+ \bigl\| \boldsymbol{v}'(s)  \bigr\|_{\mathcal{V}_{\sigma,0}^*}
	 \bigl\|\boldsymbol{v}(s)
	 \bigr\|_{\mathcal{V}_{\sigma,0}^*} \\
	& \quad \le c_5\bigl( |\Omega |+|\Gamma |\bigr)
	+ \bigl( L \bigl\|
	\boldsymbol{u}(s)  \bigr\|_{\boldsymbol{H}}
	+ \bigl\|\boldsymbol{\pi }(\boldsymbol{0})\bigr\|_{\boldsymbol{H}}\bigr)
	 \Bigl[ 
	 \bigl\| \boldsymbol{u}(s)
	 \bigr\|_{\boldsymbol{H}}
	+ |m_0| \bigl( |\Omega |^{1/2}+|\Gamma |^{1/2} \bigr)
	 \Bigr] \nonumber\\
	&\qquad 
	+ C  \bigl\| \boldsymbol{u}'(s)  \bigr\|_{\mathcal{V}_{\sigma,0}^*}
	 \bigl\| \boldsymbol{u}(s)
	 \bigr\|_{\boldsymbol{H}}, \quad \text{for a.a.\ }s \ge \eta.
\end{align*}
 Therefore, using equations \eqref{p4}, \eqref{p5}
and applying estimates \eqref{est0}, \eqref{est32},
we see that there exists a positive constant $M_3'$, depending on
$M_1$, $M_2$, $c_4$, $c_5$, $|\Omega|$, $|\Gamma |$, $L$,  and $m_0$ such that
\begin{align}
	& \bigl| m\bigl( \boldsymbol{\mu }(s)\bigr)\bigr|\nonumber \\
	& \quad \le \frac{1}{|\Omega |+|\Gamma |}
	 \left[ \int_{\Omega }^{} \bigl| \beta \bigl( u(s) \bigr)\bigr| dx
	+  \int_{\Gamma}^{} \bigl| \beta_\Gamma \bigl( u_\Gamma (s) \bigr)\bigr| d\Gamma
	+ \int_{\Omega }^{} \bigl| \pi \bigl( u(s) \bigr)\bigr| dx
	+  \int_{\Gamma}^{} \bigl| \pi_\Gamma \bigl( u_\Gamma (s) \bigr)\bigr| d\Gamma \right]  \nonumber
	\\
	& \quad \le  M_3', \quad \text{for a.a.\ }s \ge \eta.
	\label{esmum}
\end{align}
 Combining this estimate with \eqref{estmu2},
we can apply the generalized {P}oincar\'e inequality given by Lemma~A.3 to achieve the conclusion \eqref{est33}. 
\end{proof}

\begin{remark}\label{regrem}
Thanks to \eqref{est32} and \eqref{est33}, we are able to rewrite \eqref{p3} as 
\begin{equation*}
	\bigl(\boldsymbol{u}'(t), \boldsymbol{z}
	\bigr)_{\boldsymbol{H}}
	+ a_\sigma \bigl( \boldsymbol{\mu }(t), \boldsymbol{z} \bigr) = 0,
	\quad {\it for~all~} \boldsymbol{z} \in \mathcal{V}_{\sigma}\text{\ \ and\ \ } 
	{\it a.a.\ } t \in [\eta,\infty).
\end{equation*}
Regarding this as an elliptic problem for $\boldsymbol{\mu}$, 
then from the elliptic regularity theory (for $\sigma>0$, 
see {\rm Lemma \ref{esLepH2})}, \eqref{est32} and \eqref{est33}, we see that  for any $T >\eta>0$, 
\begin{equation*}
	\boldsymbol{\mu }\in L^2(\eta,T;\mathcal{W}_\sigma).
\end{equation*}
This allows us to obtain the strong form of equation \eqref{p3}:
\begin{align}
	&\partial _t u -\Delta \mu =0, \quad {\rm a.e.\ in~ } (\eta,\infty ) \times \Omega ,
	\label{aein}
	\\
	&\partial _t u_\Gamma +\partial _{\boldsymbol{\nu }} \mu
	-\sigma \Delta _\Gamma \mu _{\Gamma } = 0, \quad {\rm a.e.\ in~ } (\eta,\infty ) \times \Gamma.
	\label{aeon}
\end{align}
See, e.g., \cite[Section~4]{CF15} for related discussions when $\sigma=1$.
\end{remark}

The following lemma is a generalization of \cite[Corollary~4.3]{GGM17} from the case of  homogeneous {N}eumann boundary condition to the current higher-order nonlinear boundary condition \eqref{p5} with singular term.
\begin{lemma} \label{reguH2}
For any $\eta>0$, there exists a positive constant $M_4$ such that
\begin{align}
	&  \bigl \| \beta (u)  \bigr \|_{L^\infty (\eta,\infty;L^p(\Omega))}
	+   \bigl \| \beta (u_\Gamma )  \bigr \|_{L^\infty (\eta,\infty;L^p(\Gamma))} \le M_4,
	\label{est341} \\
	&  \bigl \|\beta_\Gamma(u_{\Gamma})  \bigr \|_{L^\infty(\eta,\infty; H_\Gamma)} 
	\leq M_4,\label{est342}\\
	&\|\boldsymbol{u}\|_{L^\infty (\eta,\infty ;\boldsymbol{W})} \le M_4.
	\label{est34}
\end{align}
In \eqref{est341}, when $n=3$, $p\in[1,6]$ if $\sigma>0$ and $p\in [1,4]$ if $\sigma=0$; 
when $n=2$, $p\in [1,\infty)$. 
\end{lemma}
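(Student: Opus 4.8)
The plan is to establish the three bounds \eqref{est341}, \eqref{est342}, \eqref{est34} in this order, each feeding the next, by regarding \eqref{p4}--\eqref{p5} as a (time-independent, for a.a.\ $t\ge\eta$) elliptic system and combining it with the uniform bounds $\|\boldsymbol u\|_{L^\infty(0,\infty;\boldsymbol V)}\le M_1$ from \eqref{est0}, $\|\boldsymbol u'\|_{L^\infty(\eta,\infty;\mathcal V_{\sigma,0}^*)}\le M_2$ from \eqref{est32}, and $\|\boldsymbol\mu\|_{L^\infty(\eta,\infty;\mathcal V_\sigma)}\le M_3$ from \eqref{est33}. Throughout, $\pi(u)$ and $\pi_\Gamma(u_\Gamma)$ are bounded functions by (A3) and $|u|,|u_\Gamma|\le1$.

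\emph{Step 1 ($L^p$-bound on $\beta(u)$, $\beta(u_\Gamma)$).} For an admissible exponent $p$ I would test \eqref{p4} with $|\beta(u)|^{p-2}\beta(u)$, whose boundary trace is $|\beta(u_\Gamma)|^{p-2}\beta(u_\Gamma)$ since $u_{|\Gamma}=u_\Gamma$, integrate by parts, and substitute $\partial_{\boldsymbol\nu}u$ from \eqref{p5}. By (A1) the bulk and surface contributions $(p-1)\int_\Omega|\beta(u)|^{p-2}\beta'(u)|\nabla u|^2\,dx$ and $(p-1)\int_\Gamma|\beta(u_\Gamma)|^{p-2}\beta'(u_\Gamma)|\nabla_\Gamma u_\Gamma|^2\,d\Gamma$ are nonnegative; since $\beta$ and $\beta_\Gamma$ have the same sign and vanish at $0$, the boundary potential term equals $\int_\Gamma|\beta_\Gamma(u_\Gamma)|\,|\beta(u_\Gamma)|^{p-1}\,d\Gamma$, which by (A2) and Young's inequality dominates $(2c_1)^{-1}\int_\Gamma|\beta(u_\Gamma)|^p\,d\Gamma$ up to an additive constant. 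The remaining terms contain only $\mu$, $\mu_\Gamma$ and the bounded $\pi(u)$, $\pi_\Gamma(u_\Gamma)$; by H\"older's and Young's inequalities they are absorbed into $\tfrac12\|\beta(u)\|_{L^p(\Omega)}^p+(4c_1)^{-1}\|\beta(u_\Gamma)\|_{L^p(\Gamma)}^p$, provided $\mu\in L^p(\Omega)$ and $\mu_\Gamma\in L^p(\Gamma)$ with controlled norms. Since $\boldsymbol\mu(t)\in\mathcal V_\sigma$ is uniformly bounded, the embeddings $H^1(\Omega)\hookrightarrow L^p(\Omega)$ (so $p\le6$ if $n=3$, any $p<\infty$ if $n=2$) and, for the boundary component, $H^1(\Gamma)\hookrightarrow L^p(\Gamma)$ when $\sigma>0$ resp.\ $H^{1/2}(\Gamma)\hookrightarrow L^p(\Gamma)$ when $\sigma=0$ (forcing $p\le4$ if $n=3$, $\sigma=0$) pin down exactly the admissible range claimed. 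The singular test function is made rigorous by truncating $\beta$ at level $k$, deriving the bound uniformly in $k$, and passing to the limit by Fatou's lemma — or, equivalently, by running the computation on the Yosida-regularized system \eqref{ap1}--\eqref{ap3} and letting $\varepsilon\to0$. This proves \eqref{est341}.

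\emph{Step 2 ($L^2$-bound on $\beta_\Gamma(u_\Gamma)$).} Taking $p=2$ in Step 1, $g:=\mu-\beta(u)-\pi(u)\in L^\infty(\eta,\infty;H)$, so $-\Delta u=g$ with Dirichlet datum $u_{|\Gamma}=u_\Gamma\in L^\infty(\eta,\infty;V_\Gamma)$. Splitting $u$ into an $H^2$-part with zero trace and the harmonic extension of $u_\Gamma$, and using that the Dirichlet-to-Neumann operator maps $V_\Gamma$ boundedly into $H_\Gamma$, I obtain $u\in L^\infty(\eta,\infty;H^{3/2}(\Omega))$ and, crucially, $\partial_{\boldsymbol\nu}u\in L^\infty(\eta,\infty;H_\Gamma)$ with norm controlled by $\|g\|_H+\|u_\Gamma\|_{V_\Gamma}$. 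Hence \eqref{p5} becomes $-\Delta_\Gamma u_\Gamma+\beta_\Gamma(u_\Gamma)=g_\Gamma:=\mu_\Gamma-\partial_{\boldsymbol\nu}u-\pi_\Gamma(u_\Gamma)\in L^\infty(\eta,\infty;H_\Gamma)$, an elliptic equation on the closed manifold $\Gamma$ with maximal monotone nonlinearity $\beta_\Gamma$. Testing it (for a.a.\ $t\ge\eta$) with the truncation $\beta_\Gamma^{(k)}(u_\Gamma)\in V_\Gamma$, using $-\int_\Gamma\Delta_\Gamma u_\Gamma\,\beta_\Gamma^{(k)}(u_\Gamma)\,d\Gamma=\int_\Gamma(\beta_\Gamma^{(k)})'(u_\Gamma)|\nabla_\Gamma u_\Gamma|^2\,d\Gamma\ge0$ together with $|\beta_\Gamma^{(k)}(r)|^2\le\beta_\Gamma^{(k)}(r)\beta_\Gamma(r)$, yields $\|\beta_\Gamma^{(k)}(u_\Gamma)\|_{H_\Gamma}\le\|g_\Gamma\|_{H_\Gamma}$ uniformly in $k$; letting $k\to\infty$ (Fatou) gives \eqref{est342}.

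\emph{Step 3 ($\boldsymbol W$-bound) and the main obstacle.} Now $g\in L^\infty(\eta,\infty;H)$ and $g_\Gamma\in L^\infty(\eta,\infty;H_\Gamma)$ are both available, so \eqref{p4}--\eqref{p5} constitute the linear bulk--surface elliptic system $-\Delta u+u=g+u$ in $\Omega$, $u_{|\Gamma}=u_\Gamma$, $\partial_{\boldsymbol\nu}u-\Delta_\Gamma u_\Gamma+u_\Gamma=g_\Gamma+u_\Gamma$ on $\Gamma$, with right-hand side bounded in $L^\infty(\eta,\infty;\boldsymbol H)$; the elliptic regularity result Lemma~\ref{esLepH2} (and its counterpart for $\sigma=0$) then gives $\boldsymbol u(t)\in\boldsymbol W$ with $\|\boldsymbol u(t)\|_{\boldsymbol W}\le C\big(\|g(t)+u(t)\|_H+\|g_\Gamma(t)+u_\Gamma(t)\|_{H_\Gamma}\big)$ for a.a.\ $t\ge\eta$, which is \eqref{est34}. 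The delicate point of the whole argument is Step 2: one cannot invoke Lemma~\ref{esLepH2} directly, because its hypothesis already includes $\beta_\Gamma(u_\Gamma)\in H_\Gamma$, precisely what is being proved. The way around is to first extract the intermediate $H^{3/2}(\Omega)$-regularity of $u$ using \emph{only} the $V_\Gamma$-regularity of the Dirichlet trace; this produces $\partial_{\boldsymbol\nu}u\in H_\Gamma$, decouples the surface equation, and lets the monotone-operator estimate on $\Gamma$ close. The recurring secondary technicality, in Steps 1 and 2, is the rigorous use of the singular test functions $|\beta(u)|^{p-2}\beta(u)$ and $\beta_\Gamma(u_\Gamma)$, handled by truncation and a monotone passage to the limit.
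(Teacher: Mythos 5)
Your proposal is correct, and Steps 1 and 3 essentially reproduce the paper's argument: the same truncated test functions $|\beta(u_k)|^{p-2}\beta(u_k)$, the same use of monotonicity, the sign compatibility of $\beta$ and $\beta_\Gamma$, assumption (A2), and the Sobolev embeddings of $V$, $V_\Gamma$ (resp.\ $H^{1/2}(\Gamma)$ when $\sigma=0$) to pin down the admissible range of $p$, followed by Fatou's lemma and Lemma \ref{esLepH2} for the $\boldsymbol{W}$-bound. The genuine difference is in Step 2, i.e.\ in how the circularity around $\partial_{\boldsymbol{\nu}}u$ is broken. The paper does \emph{not} decouple the surface equation: it tests \eqref{p5a} with $\beta_\Gamma(u_{\Gamma,k})$ and controls the troublesome term via the trace/interpolation estimate $\|\partial_{\boldsymbol{\nu}}u\|_{H_\Gamma}^2\le\zeta\|u\|_{H^2(\Omega)}^2+C_\zeta\|u\|_H^2$, so that the resulting bound \eqref{bGam} on $\|\beta_\Gamma(u_\Gamma)\|_{H_\Gamma}$ still contains $\zeta\|u\|_{H^2}^2$; this is only closed afterwards by feeding it into the elliptic estimate and absorbing $\tfrac12\|\boldsymbol{u}\|_{\boldsymbol{W}}$ (see \eqref{uWa}). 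You instead bound $\partial_{\boldsymbol{\nu}}u$ in $H_\Gamma$ \emph{a priori} by splitting $u$ into an $H^2$ part with zero trace and the harmonic extension of $u_\Gamma$, invoking the boundedness of the Dirichlet-to-Neumann map $H^1(\Gamma)\to L^2(\Gamma)$; this fully decouples the surface estimate from the bulk $H^2$ norm and removes the absorption step. Your route buys a cleaner logical structure (\eqref{est342} is obtained before, and independently of, \eqref{est34}) at the price of a heavier tool (the mapping properties of the Dirichlet-to-Neumann operator on a smooth closed surface), whereas the paper's route stays within the elementary trace theorem plus Ehrling's lemma but must close a coupled system of inequalities. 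Two harmless remarks: Lemma \ref{esLepH2} applies verbatim for both $\sigma>0$ and $\sigma=0$ in Step 3, since the elliptic system for $\boldsymbol{u}$ always contains $-\Delta_\Gamma u_\Gamma$ (the distinction between $\mathcal{V}_\sigma$ and $\boldsymbol{V}$ concerns only $\boldsymbol{\mu}$); and in Step 2 the integration by parts on $\Gamma$ should be read as the $V_\Gamma^*$--$V_\Gamma$ duality pairing, which is legitimate because $\beta_\Gamma(u_{\Gamma,k})\in V_\Gamma$ and $u_\Gamma(t)\in V_\Gamma$.
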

\begin{proof}
As in \cite{GGM17}, for each $k\in \mathbb{N}\setminus \{1\}$, we define the {L}ipschitz continuous function $h_k: \mathbb{R} \to \mathbb{R}$ by
\begin{equation}
	h_k(r):=
	\begin{cases}
	\displaystyle -1+\frac{1}{k} & \displaystyle {\rm if~} r<-1+\frac{1}{k},\\
	r & \displaystyle {\rm if~}-1+\frac{1}{k} \le r \le 1-\frac{1}{k}, \\
	\displaystyle 1-\frac{1}{k} & \displaystyle {\rm if~}r>1-\frac{1}{k}.
	\end{cases}
	\label{apphk}
\end{equation}
Define 
\begin{align}
\begin{split}
	u_k(s)& :=h_k\circ u(s) \quad \text{ in }V, \\
	u_{\Gamma,k}(s)& :=h_k\circ u_\Gamma(s) \quad \text{ in }V_\Gamma \ \text{ for a.a.\ }s \ge \eta.
\end{split}
\label{uk}
\end{align}
 We have 
$\boldsymbol{u}_k:=(u_k,u_{\Gamma,k})\in C([\eta,\infty);\boldsymbol{V})$ for any $\eta>0$ and 
\begin{equation*}
	\nabla u_k=\nabla u \chi_{[-1+\frac{1}{k},\,1-\frac{1}{k}]}(u),\quad \nabla_\Gamma u_{\Gamma,k}=\nabla_\Gamma u_\Gamma \chi_{[-1+\frac{1}{k},\,1-\frac{1}{k}]}(u_\Gamma),
\end{equation*}
see, e.g., \cite[Corollary~A.6, Chapter~II]{KS00}. 
For any $k\in \mathbb{N}\setminus \{1\}$ and $p\geq 2$, we see that 
$\beta_k:=|\beta(u_k)|^{p-2}\beta(u_k)\in C([\eta,\infty);V)$ is well-defined and 
\begin{equation*}
	\nabla \beta_k=(p-1)  \bigl | \beta(u_k)  \bigr|^{p-2} \beta'(u_k)\nabla u_k.
\end{equation*}
Besides, we note that $ (\beta_k)_{|_\Gamma}=|\beta(u_{\Gamma,k})|^{p-2}\beta(u_{\Gamma,k})\in C([\eta,\infty);V_\Gamma)$. 
Denote 
\begin{equation*}
	\widetilde{\mu}{ {}:={}}\mu-\pi(u),\quad 
	\widetilde{\mu}_\Gamma { {}:={}} \mu_\Gamma-\pi_\Gamma(u_\Gamma).
\end{equation*}
Then 
$\widetilde{\mu}\in L^\infty(\eta,\infty; V)$,  and 
$\widetilde{\mu}_\Gamma \in L^\infty(\eta,\infty; V_\Gamma)$ 
if $\sigma>0$, $\widetilde{\mu}_\Gamma \in L^\infty(\eta,\infty;  H^{1/2}(\Gamma))$ if $\sigma=0$ (see Lemma \ref{muV}).

We start to estimate $\beta(u)$ and $\beta (u_\Gamma)$. For this purpose, multiplying the equation (cf. \eqref{p4})
\begin{equation}
	-\Delta u + \beta (u) = \widetilde{\mu },
	\label{p4a}
\end{equation}
 by $\beta_k$, integrating over $\Omega$, using integration by parts and \eqref{p5} for the term  $\partial_{\boldsymbol{\nu}} u$, we get 
\begin{align}
	& \int_\Omega  \bigl | \beta  \bigl(u_k(s) \bigr)  \bigr|^{p-2}
	\beta \bigl( u_k(s) \bigr) \beta \bigl(u(s) \bigr) dx 
	 + \int_\Gamma \bigl| 
	 \beta  \bigl( 
	 u_{\Gamma,k}(s)  \bigr)
	  \bigr|^{p-2}
	 \beta  \bigl( 
	 u_{\Gamma,k}(s)
	  \bigr) 
	 \beta_\Gamma
	  \bigl( 
	 u_\Gamma(s)
	  \bigr) 
	 d\Gamma \nonumber\\
	& \quad =
	-(p-1)\int_{\Omega }^{} 
	 \bigl|
	\beta
	 \bigl(
	u_k(s)
	 \bigr)
	 \bigr|^{p-2} 
	\beta'
	 \bigl(
	u_k(s)
	 \bigr)
	\nabla u_k(s)\cdot  \nabla u(s) dx \nonumber\\
	&\qquad 
	{} - (p-1) \int_{\Gamma }^{}  
	 \bigl| 
	\beta \bigl( u_{\Gamma,k}(s)  \bigr) 
	 \bigr|^{p-2} 
	\beta' \bigl( u_{\Gamma,k}(s)  \bigr) 
	\nabla_\Gamma u_{\Gamma,k}(s) \cdot  \nabla_\Gamma u_\Gamma(s) d\Gamma\nonumber\\
	&\qquad 
	{}+ \int_\Omega \widetilde{\mu}  (s)
	\bigl | 
	\beta \bigl( u_k(s)  \bigr)
	 \bigr|^{p-2}\beta \bigl(u_k(s) \bigr) dx
	+ \int_\Gamma \widetilde{\mu}_\Gamma  (s)
	 \bigl| 
	\beta \bigl(u_{\Gamma,k}(s) \bigr)  \bigr|^{p-2}
	\beta \bigl(u_{\Gamma,k}(s) \bigr) d\Gamma\nonumber\\
   & \quad { {}=: {} } I_1+I_2+I_3+I_4,
   \label{II}
\end{align}
for a.a.\ $s\geq \eta$. 
From assumption (A1), we infer that $I_1\leq 0$ and $I_2\leq 0$. 
 Next,  by the {S}obolev embedding theorem, the H\"{o}lder and Young inequalities, we obtain 
\begin{equation*}
	I_3 \leq \frac12 \bigl\| 
	\beta(u_k) \bigr\|_{L^p(\Omega)}^p + C
	\| \widetilde{\mu} \|_{L^p(\Omega)}^p
	\leq \frac12  \bigl\| 
	\beta(u_k)  \bigr\|_{L^p(\Omega)}^p+C \|\widetilde{\mu}\|_{V}^p,
\end{equation*}
and 
\begin{align*}
	I_4&\leq \frac12  \bigl \| \beta(u_{\Gamma,k}) 
	 \bigr\|_{L^q(\Gamma)}^q + 
	C \|\widetilde{\mu}_\Gamma\|_{L^q(\Gamma)}^q\nonumber\\
	& \leq  
	\begin{cases}
	 \displaystyle \frac12 \bigl \| \beta(u_{\Gamma,k}) \bigr \|_{L^q(\Gamma)}^q + 
	C \|\widetilde{\mu}_\Gamma\|_{V_\Gamma}^q, & \text{if }\sigma>0, \smallskip \\
	 \displaystyle \frac12 \bigl \|\beta(u_{\Gamma,k}) \bigr \|_{L^q(\Gamma)}^q
	+C \|\widetilde{\mu}_\Gamma\|_{H^{1/2}(\Gamma)}^q, & \text{if }\sigma=0,
	\end{cases}
\end{align*}
where in the above estimates, when $n=3$, $p\in[2,6]$, and $q\in[2,\infty)$ if $\sigma>0$, 
$q\in [2,4]$ if $\sigma=0$; when $n=2$, $p\in[2,\infty)$ and $q\in [2,\infty)$.
 As a remark, throughout this proof, the reader should keep in mind that the meaning of 
 $C$ changes from line to line 
 and even within the same chain of inequalities, whereas those constants are always denoted by $C$.
 Now from assumptions (A1), we see that $\beta(r)$, $\beta_\Gamma(r)$ 
as well as $\beta(h_k(r))$ have the same sign for all $r\in (-1,1)$, 
this fact combined \eqref{apphk} that for any $k\in \mathbb{N}\setminus \{1\}$
\begin{equation}
	\beta(u_k)^2\leq \beta(u_k)\beta(u),
	\quad \beta(u_{\Gamma,k})^2\leq \beta(u_{\Gamma,k})\beta(u_{\Gamma}).
	\label{esabeta}
\end{equation}
 Moreover, it follows from assumption (A2) that 
\begin{alignat}{2}
	& 0 {{} \le {} } \beta(u_{\Gamma,k}) 
	\beta(u_\Gamma) \leq \beta(u_{\Gamma,k}) 
	 \bigl[ 
	c_1\beta_\Gamma(u_\Gamma)+c_2 
	 \bigr],
	&\text{if~} &  u_\Gamma>0,
	\label{beb1}\\
	&0<\beta(u_{\Gamma,k}) \beta(u_\Gamma)\leq -\beta(u_{\Gamma,k})
	 \bigl[ {-c_1\beta_\Gamma(u_\Gamma)+c_2}
	 \bigr], \quad & \text{if~} & u_\Gamma<0.
	\label{beb2}
\end{alignat}
As a consequence, for any  $p,q \geq 2$, we obtain
\begin{equation*}
	\bigl \| \beta(u_k) \bigr \|_{L^p(\Omega)}^p
	\leq \int_\Omega \bigl |
	\beta(u_{k})\bigr|^{p-2} 
	\beta(u_{k}) \beta(u) dx,
\end{equation*}
and 
\begin{align*}
	 \bigl \| \beta(u_{\Gamma,k})  \bigr\|_{L^q(\Gamma)}^q
	& \leq \int_\Gamma  \bigl| 
	\beta(u_{\Gamma,k})  \bigr|^{q-2} 
	\beta(u_{\Gamma,k}) \beta(u_\Gamma) d\Gamma\\
	&\leq c_1 \int_\Gamma  \bigl | \beta(u_{\Gamma,k})  \bigr|^{q-2}\beta(u_{\Gamma,k}) 
	\beta_\Gamma(u_\Gamma) d\Gamma
	+c_2 \int_\Gamma  \bigl | \beta(u_{\Gamma,k})  \bigr|^{q-1}d\Gamma\\
	&\leq c_1 \int_\Gamma  \bigl | \beta(u_{\Gamma,k})  \bigr|^{q-2}\beta(u_{\Gamma,k}) 
	\beta_\Gamma(u_\Gamma) d\Gamma + \frac12  \bigl\| 
	\beta(u_{\Gamma,k})  \bigr\|_{L^q(\Gamma)}^q +C,
\end{align*}
 a.e.\ on $[\eta, \infty)$, 
where $C$ is a positive constant that only depends on $c_2$, $|\Gamma|$, and $q$. 

Combining the above estimates, we deduce from \eqref{II} that 
\begin{align*}
	\bigl \|
	\beta(u_{k}) 
	 \bigr \|_{L^p(\Omega)}^p + 
	 \bigl \| 
	\beta(u_{\Gamma,k}) 
	 \bigr \|_{L^p(\Gamma)}^p
	\leq 
	\begin{cases}
	   C \|\widetilde{\mu}\|_{V}^p+C\|\widetilde{\mu}_\Gamma\|_{V_\Gamma}^p+C, 
	   & \text{if }\sigma>0,\\
	   C \|\widetilde{\mu}\|_{V}^p+C\|\widetilde{\mu}_\Gamma\|_{H^{1/2}(\Gamma)}^p + C, 
	   & \text{if }\sigma=0,
	   \end{cases}
\end{align*}
when $n=3$, $p\in[2,6]$ if $\sigma>0$, $p\in [2,4]$ if $\sigma=0$; 
when $n=2$,  $p\in [2,\infty)$, 
the constant $C$ may depend on $c_1$, $c_2$, $\Omega$, $\Gamma$, $p$ but is independent of $k$. 
Passing to the limit as $k\to\infty$, owing to the Fatou lemma, we conclude from \eqref{est0} and \eqref{est33} that
\begin{equation}
	 \bigl \| 
	\beta  \bigl ( u(s)  \bigr) 
	 \bigr\|_{L^p(\Omega)}
	+ \bigl \| 
	\beta 
	 \bigl ( 
	u_\Gamma (s) 
	 \bigr) 
	 \bigr \|_{L^p(\Gamma)} \le C,\quad \text{ for a.a.\ }s\geq \eta. 
	\label{est341a} 
\end{equation}
The case $p\in[1,2)$ can be easily handled by the H\"{o}lder inequality. 

Next, we estimate the boundary potential $\beta_\Gamma(u_\Gamma)$. 
Multiplying the equation (cf.\ \eqref{p5}) 
\begin{equation}
	\partial _{\boldsymbol{\nu }} u
	- \Delta_\Gamma  u_\Gamma  +
	\beta_\Gamma (u_\Gamma ) = \mu_\Gamma  -\pi_\Gamma (u_\Gamma )=: \widetilde{\mu }_\Gamma
	\quad {\rm a.e.\ on~}\Sigma,\label{p5a}
\end{equation}
by $\beta_\Gamma(u_{\Gamma,k})\in C([\eta,\infty);V_\Gamma)$,
integrating over $\Gamma$, after integration by parts, we get
\begin{align}
	& \int_\Gamma \beta_\Gamma 
	 \bigl ( u_{\Gamma,k}(s) 
	 \bigr) \beta_\Gamma
	 \bigl( 
	u_\Gamma(s)
	 \bigr) d\Gamma \nonumber\\
	& \quad 
	= - \int_{\Gamma }^{} 
	\beta'_\Gamma  \bigl( 
	u_{\Gamma,k}(s) 
	 \bigr) 
	\nabla_\Gamma u_{\Gamma,k}(s)\cdot  \nabla_\Gamma u_\Gamma(s) d\Gamma
	+ \int_\Gamma \widetilde{\mu}_\Gamma (s)\beta_\Gamma 
	 \bigl ( u_{\Gamma,k}(s)  \bigr) d\Gamma\nonumber\\
	&\qquad 
	-\int_\Gamma \partial_{\boldsymbol{\nu}} u(s) \beta_\Gamma 
	 \bigl( u_{\Gamma,k}(s)  \bigr) d\Gamma\nonumber\\
	&\quad { {}=:{}} I_5+I_6+I_7,
	\label{III}
\end{align}
for a.a.\ $s\geq \eta$. 
Similar to $I_1$, we see that $I_5\leq 0$. $I_6$ and $I_7$ can be estimated as follows:
\begin{equation*}
	I_6 \leq \frac14  \bigl \| 
	\beta_\Gamma(u_{\Gamma,k}) 
	 \bigr \|_{H_\Gamma}^2
	+ \| \widetilde{\mu}_\Gamma\|_{H_\Gamma}^2, \quad 
	I_7 \leq \frac14 
	 \bigr \| 
	\beta_\Gamma(u_{\Gamma,k}) 
	 \bigr \|_{H_\Gamma}^2
	+ \|\partial _{\boldsymbol{\nu }} u\|_{H_\Gamma}^2.
\end{equation*}
Besides, by the trace theorem (see e.g., \cite{AF03}), Lemma \ref{inter} and 
 the Young inequality, we see that for some $r\in ( 3/2, 2)$, it holds
\begin{equation*}
	\|
	\partial _{\boldsymbol{\nu }} u 
	\|_{H_\Gamma}^2
	\leq C \|  u \| _{H^r(\Omega)}^2
	\leq \zeta \|  u \|_{H^2(\Omega)}^2
	+ C_\zeta \|  u \|_{H}^2,
\end{equation*}
for any $\zeta>0$. Similar to \eqref{esabeta}, using the fact
\begin{equation*}
	\beta_\Gamma(u_{\Gamma,k})^2\leq \beta_\Gamma(u_{\Gamma,k})\beta_\Gamma(u_{\Gamma}),
\end{equation*}
we deduce from \eqref{III} that 
\begin{equation*}
	 \bigl \| \beta_\Gamma(u_{\Gamma,k}) 
	 \bigr \| _{H_\Gamma}^2 
	\leq 2 \| \widetilde{\mu}_\Gamma\|_{H_\Gamma}^2+ 2\zeta \| u \|_{H^2(\Omega)}^2+ 2C_\zeta\| u \|_{H}^2.
\end{equation*}
Passing to the limit as $k\to\infty$, it follows that 
\begin{equation}
	 \bigl \| \beta_\Gamma(u_{\Gamma})  \bigr \|_{H_\Gamma}^2 
	\leq 2 \|\widetilde{\mu}_\Gamma\|_{H_\Gamma}^2
	+ 2\zeta \| u \|_{H^2(\Omega)}^2+ 2C_\zeta\| u \|_{H}^2.
	\label{bGam}
\end{equation}
From the elliptic regularity theory Lemma \ref{esLepH2}, we have 
\begin{align*}
	& \| \boldsymbol{u}\|_{\boldsymbol{W}}  \\
	& \leq C \left( 
	 \bigl \| \beta(u) \bigr \|_H +  \bigl \|\pi(u) \bigr \|_H
	 {}+ \| \mu \|_H 
	+  \bigl \| \beta_\Gamma(u_{\Gamma}) \bigr \|_{H_\Gamma}
	+  \bigl \| \pi_\Gamma(u_\Gamma) \bigr \|_{H_\Gamma}
	 {}+ \| \mu_\Gamma  \|_{H_\Gamma } 
	+\|\boldsymbol{u}\|_{\boldsymbol{H}}
	\right),
\end{align*}
 a.e.\ on $[\eta, \infty)$. 
In view of \eqref{est0}, \eqref{est33}, \eqref{est341a} and taking the coefficient $\zeta$ sufficiently small in \eqref{bGam}, we get
\begin{equation}
	 \bigl \|
	\boldsymbol{u}(s)
	 \bigr \|_{\boldsymbol{W}} 
	\leq \frac12 \bigl \| \boldsymbol{u}(s) \bigr \|_{\boldsymbol{W}} 
	+ C, \quad \text{for a.a.\ } s\geq \eta, \label{uWa}
\end{equation}
which together with \eqref{est0}, \eqref{est33} and \eqref{bGam} further implies 
\begin{equation}
	 \bigl \| \beta_\Gamma \bigl( u_{\Gamma}(s) \bigl ) \bigr \|_{H_\Gamma} 
	\leq C,\quad \text{for a.a.\ } s\geq \eta.
	\label{bGama}
\end{equation}

Collecting the above estimates \eqref{est341a}, \eqref{uWa} and \eqref{bGama},  after choosing the constant $M_4$ suitably large, we arrive at our conclusion \eqref{est341}--\eqref{est34}.
\end{proof}
In summary, thanks to Lemmas \ref{muV}, \ref{reguH2} and Remark \ref{regrem}, since $\eta>0$ is arbitrary, we see that every global weak solution $(\boldsymbol{u},\boldsymbol{\mu })$ to problem \eqref{GMS} becomes a global strong solution instantaneously when $t>0$.

\section{Separation Property}
\setcounter{equation}{0}

In this section, we prove the separation property of global weak solutions $\boldsymbol{u}(t)$ stated in Theorem \ref{septhm}. 

\subsection{Eventual separation from pure states}
The eventual separation property for sufficiently large time is obtained by a dynamical approach (see e.g., \cite{AW07,GMS09}). 

For any given number $ a \in (-1,1)$, we introduce the phase space (cf.\ (A4))
\begin{equation*}
	\Phi_{ a}:=
	\bigl \{ 
	\boldsymbol{u}=(u,u_\Gamma)\in \boldsymbol{V} 
	{ :} \ \widehat{\beta}(u)\in L^1(\Omega),\ 
	\widehat{\beta}_\Gamma(u_\Gamma)\in L^1(\Gamma), \ 
	m(\boldsymbol{u})={}  a \bigr \}. 
\end{equation*}
Then we have 
\begin{proposition}\label{semigroup}
Assume that the assumptions in Proposition \ref{gloweak} are satisfied. The initial boundary value problem \eqref{GMS} defines a strongly continuous semigroup $S(t):\Phi_{m_0}\mapsto \Phi_{m_0}$ such that 
\begin{equation*}
	S(t)\boldsymbol{u}_0=\boldsymbol{u}(t),\quad \text{for all }t\geq 0,
\end{equation*}
where $\boldsymbol{u}(t)$ is the unique global weak solution to problem \eqref{GMS} subject to the initial datum $\boldsymbol{u}_0\in \Phi_{m_0}$.  
\end{proposition}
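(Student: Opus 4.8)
The plan is to obtain every ingredient of the statement from Proposition~\ref{gloweak} (existence, uniqueness and continuous dependence) together with the mass conservation and the instantaneous regularity established in Section~3, so that no new estimate is required. \textbf{Definition and invariance.} For $\boldsymbol u_0\in\Phi_{m_0}$ the hypotheses (A1)--(A4) are in force, hence Proposition~\ref{gloweak} yields a unique global weak solution $\boldsymbol u$, and I would set $S(t)\boldsymbol u_0:=\boldsymbol u(t)$. To check $S(t)\boldsymbol u_0\in\Phi_{m_0}$ for all $t\ge0$: the mean value is preserved by \eqref{masscov}, so $m(\boldsymbol u(t))=m_0$; the uniform bound \eqref{est0} gives $\boldsymbol u(t)\in\boldsymbol V$; and for $t>0$ the estimates \eqref{est341}--\eqref{est34} give $\boldsymbol u(t)\in\boldsymbol W$, which embeds into continuous functions since $n\le3$, while $\beta(u(t))\in L^p(\Omega)$ and $\beta_\Gamma(u_\Gamma(t))\in L^p(\Gamma)$ force $u(t),u_\Gamma(t)$ to take values in $(-1,1)$ a.e., whence $\|u(t)\|_{L^\infty(\Omega)}\le1$ and $\|u_\Gamma(t)\|_{L^\infty(\Gamma)}\le1$; since $\widehat\beta,\widehat\beta_\Gamma\in C^0([-1,1])$ by (A1), this gives $\widehat\beta(u(t))\in L^1(\Omega)$ and $\widehat\beta_\Gamma(u_\Gamma(t))\in L^1(\Gamma)$. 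For $t=0$ these are precisely the compatibility conditions in (A4). Thus $S(t)$ maps $\Phi_{m_0}$ into itself.

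\textbf{Semigroup law.} One has $S(0)=\mathrm{id}$ trivially. Fix $s\ge0$ and set $\boldsymbol w(\tau):=\boldsymbol u(\tau+s)$. Because system \eqref{GMS} is autonomous, $\boldsymbol w$ is again a global weak solution on $(0,\infty)$ in the sense of Proposition~\ref{gloweak}, with initial datum $\boldsymbol w(0)=S(s)\boldsymbol u_0\in\Phi_{m_0}$; by the uniqueness part of that proposition it must coincide with $S(\cdot)\bigl(S(s)\boldsymbol u_0\bigr)$. Evaluating at $\tau=t$ gives $S(t+s)\boldsymbol u_0=S(t)S(s)\boldsymbol u_0$.

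\textbf{Strong continuity.} Continuity of $t\mapsto S(t)\boldsymbol u_0$ follows from the time regularity \eqref{p1}, which by the usual interpolation between $H^1(0,T;\mathcal V_{\sigma,0}^*)$ and $L^\infty(0,T;\boldsymbol V_0)$ yields continuity into $\boldsymbol H$ on $[0,\infty)$, sharpened to continuity into $\boldsymbol V$ on $[\eta,\infty)$ for every $\eta>0$ by \eqref{addconti}. For the dependence on data, \eqref{unique1} gives the Lipschitz bound $\|S(t)\boldsymbol u_0^{(1)}-S(t)\boldsymbol u_0^{(2)}\|_{\mathcal V_{\sigma,0}^*}\le C_T\|\boldsymbol u_0^{(1)}-\boldsymbol u_0^{(2)}\|_{\mathcal V_{\sigma,0}^*}$ on $[0,T]$; combining it with the uniform bound \eqref{est34} in $\boldsymbol W$ on $[\eta,\infty)$ and an interpolation inequality of the type $\|\cdot\|_{\boldsymbol V}\le C\|\cdot\|_{\boldsymbol W}^{1-\theta}\|\cdot\|_{\mathcal V_{\sigma,0}^*}^{\theta}$ with a suitable $\theta\in(0,1)$ upgrades it to continuous dependence in the $\boldsymbol V$-metric for $t\ge\eta$, while at $t=0$ it is trivial. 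Together with the previous two steps this shows that $S(t)$ is a strongly continuous semigroup on $\Phi_{m_0}$ endowed with the $\boldsymbol V$-topology.

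\textbf{Main obstacle.} I do not expect a serious difficulty: the whole argument is essentially bookkeeping built on Proposition~\ref{gloweak} and Section~3. The two points requiring care are that the invariance $S(t)\boldsymbol u_0\in\Phi_{m_0}$ for $t>0$ genuinely uses the \emph{instantaneous} smoothing of Section~3 (in particular the $\boldsymbol W$-bound and the bound on $\beta(u)$, $\beta_\Gamma(u_\Gamma)$), not merely the weak formulation; and that passing from the dual-norm continuous-dependence estimate \eqref{unique1} to the $\boldsymbol V$-norm of the phase space is uniform only away from $t=0$, where it relies on the uniform higher-order bounds of Lemma~\ref{reguH2}.
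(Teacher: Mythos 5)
There is a genuine gap at the one point where the paper's proof actually has content: the strong continuity of $t\mapsto S(t)\boldsymbol{u}_0$ \emph{at} $t=0$ in the topology of the phase space. You yourself declare that $\Phi_{m_0}$ is endowed with the $\boldsymbol{V}$-topology, but your continuity argument only delivers continuity into $\boldsymbol{H}$ on $[0,\infty)$ (by interpolating \eqref{p1}) and continuity into $\boldsymbol{V}$ on $[\eta,\infty)$ via \eqref{addconti}; since $\eta>0$ is fixed in each application of Section~3, this never reaches $t=0$, and so you have not shown $\lim_{t\to0^+}\|\boldsymbol{u}(t)-\boldsymbol{u}_0\|_{\boldsymbol{V}}=0$, which is precisely the defining $C_0$-property of the semigroup on $\Phi_{m_0}$. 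Note that this cannot be fixed by the smoothing estimates of Lemmas~\ref{muV}--\ref{reguH2}, because their constants blow up as $\eta\to0$ and a general $\boldsymbol{u}_0\in\Phi_{m_0}$ is only in $\boldsymbol{V}$, not in $\boldsymbol{W}$.

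The paper closes exactly this gap by an energy argument: using the lower semicontinuity of the convex functionals $\widehat\beta$, $\widehat\beta_\Gamma$ and the strong $\boldsymbol{H}$-convergence $\boldsymbol{u}(t)\to\boldsymbol{u}_0$ to identify the limits of the potential terms of $E(\boldsymbol{u}(t))$ as $t\to0$, then invoking the energy identity \eqref{einequal} down to $t=0$ to conclude $\lim_{t\to0}\|\boldsymbol{u}(t)\|_{\boldsymbol{V}}=\|\boldsymbol{u}_0\|_{\boldsymbol{V}}$, and finally combining this norm convergence with the weak continuity $\boldsymbol{u}\in C_w([0,\infty);\boldsymbol{V})$ (a consequence of \eqref{p1}) to upgrade to strong $\boldsymbol{V}$-convergence. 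You should add this step. The remaining parts of your proposal (invariance of $\Phi_{m_0}$ via \eqref{masscov} and the bounds of Section~3, the semigroup law by time-translation plus uniqueness, and the continuous dependence for $t\ge\eta$ by interpolating \eqref{unique1} against the $\boldsymbol{W}$-bound \eqref{est34}) are correct and coincide with what the paper does, or with standard facts the paper leaves implicit.
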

\begin{proof}
We infer from \eqref{p1} that $\boldsymbol{u}(t)\in C([0,\infty);\boldsymbol{H})$. Thanks to (A1),  
$\widehat{\beta}$, $\widehat{\beta}_\Gamma$ are proper, convex and lower semi-continuous functionals on 
$H$, $H_\Gamma$, respectively. Hence, from this fact, (A3) and the strong convergence 
$\lim_{t\to 0} \boldsymbol{u}(t)=\boldsymbol{u}_0$ in $\boldsymbol{H}$, we get 
\begin{align*}
	& \lim_{t\to 0}\int_\Omega \widehat{\beta}  \bigl( u(t)  \bigr) dx 
	= \int_\Omega \widehat{\beta}(u_0) dx,\quad 
	\lim_{t\to 0}\int_\Omega \widehat{\pi}  \bigl(u(t)  \bigr) dx
	= \int_\Omega \widehat{\pi}(u_0) dx,\\
	& \lim_{t\to 0}\int_\Gamma \widehat{\beta}_\Gamma  \bigl( u_\Gamma(t)  \bigr) d\Gamma 
	= \int_\Gamma \widehat{\beta}_\Gamma(u_{0\Gamma}) d\Gamma,\quad 
	\lim_{t\to 0}\int_\Gamma \widehat{\pi}_\Gamma  \bigl( u_\Gamma(t)  \bigr) d\Gamma
	= \int_\Gamma \widehat{\pi}_\Gamma(u_{0\Gamma}) d\Gamma.
\end{align*}
On the other hand, recall \eqref{equality}, since $\eta>0$ is arbitrary, we deduce the energy equality  
\begin{equation}
	E\bigl(\boldsymbol{u}(t) \bigr)
	+ \int_{0}^{t}  \bigl \| \boldsymbol{u}'(s)  \bigr \|_{\mathcal{V}_{\sigma,0}^*}^2 ds
	= E\bigl(\boldsymbol{u}(0) \bigr)=E(\boldsymbol{u}_0),\quad \text{for all }t>0.
	\label{einequal}
\end{equation}
Then it holds $\lim_{t\to 0}\|\boldsymbol{u}(t)\|_{\boldsymbol{V}}
=\|\boldsymbol{u}_0\|_{\boldsymbol{V}}$. 
Since $\boldsymbol{u}(t)\in C_w([0,\infty);\boldsymbol{V})$ 
due to  \eqref{p1}, then we obtain the strong convergence 
$\lim_{t\to 0}  \|\boldsymbol{u}(t)-\boldsymbol{u}_0\|_{\boldsymbol{V}}=0$. 
This combined with \eqref{addconti} further implies that 
$\boldsymbol{u}(t)\in C([0,\infty); \Phi_{m_0})$. On the other hand, from  \eqref{unique1}, \eqref{est34} and the interpolation inequality we  infer that $S(t)\in C( \Phi_{m_0}, \Phi_{m_0})$ for all $t\ge 0$ (noting that $S(0)=I$).  
\end{proof}

Next, we consider the stationary problem corresponding to \eqref{GMS}, which can be (formally) obtained by neglecting those time derivatives.
\begin{equation}
\left\{
	\begin{aligned}
	&\Delta \mu_s =0, & \text{ a.e.\ in } \Omega, \\
	&\mu_s = -\Delta u_s + \beta (u_s) + \pi (u_s),  & \text{ a.e.\ in } \Omega, \\
	&u_{s|_\Gamma } =u_{\Gamma s},
	\quad \mu_{s|_\Gamma }=\mu_{\Gamma s}, & \text{ a.e.\ on } \Gamma, \\
	&\partial_{\boldsymbol{\nu }} \mu_s
	-\sigma\Delta_\Gamma  \mu_{\Gamma_s}  =0, & \text{ a.e.\ on } \Gamma, \\
	&\mu_{\Gamma s} = \partial _{\boldsymbol{\nu }} u_s
	- \Delta_\Gamma  u_{\Gamma s}  + \beta_\Gamma (u_{\Gamma s} ) 
	+ \pi_\Gamma (u_{\Gamma s} ), &  \text{ a.e.\ on } \Gamma. 
	\end{aligned}
\right.
	\label{GMSsta}
\end{equation}
It is straightforward to check that, if a pair 
$\boldsymbol{\mu}_s=(\mu_s, \mu_{\Gamma s})$ is a solution to \eqref{GMSsta}, then $\mu_s=\mu_{\Gamma s}$ must be a constant. Thus, system \eqref{GMSsta} simply reduces to a nonlocal elliptic boundary value problem for $\boldsymbol{u}_s=(u_s, u_{\Gamma s})$:
\begin{equation}
\left\{
	\begin{aligned}
	&\mu_s = -\Delta u_s + \beta (u_s) + \pi (u_s),  & \text{ a.e.\ in } \Omega, \\
	&u_{s|_\Gamma } =u_{\Gamma s}, & \text{ a.e.\ on } \Gamma, \\
	&\mu_{s} = \partial _{\boldsymbol{\nu }} u_s
	- \Delta_\Gamma  u_{\Gamma s}  + \beta_\Gamma (u_{\Gamma s} ) 
	+ \pi_\Gamma (u_{\Gamma s} ), & \text{ a.e.\ on } \Gamma, 
	\end{aligned}
\right.
	\label{GMSsta1}
\end{equation}
where 
\begin{equation}
	\mu_s=\frac{1}{|\Omega|+|\Gamma|} \left[ \int_\Omega 
	 \bigl( \beta(u_s) +\pi(u_s) \bigr) dx+
	\int_\Gamma  \bigl(  \beta_\Gamma (u_{\Gamma s} ) 
	+ \pi_\Gamma(u_{\Gamma s}) \bigr) d\Gamma\right].
\label{staweak2}
\end{equation}
More precisely, we introduce 
\begin{definition}\label{weaksta}
A pair $\boldsymbol{u}_s=(u_s,u_{\Gamma s})$ 
is called a steady state of problem \eqref{GMS}, 
if $\boldsymbol{u}_s=(u_s,u_{\Gamma s})\in \Phi_{a}$ 
for some $ a \in (-1,1)$, $\beta(u_s)\in L^2(\Omega)$, 
$\beta_\Gamma(u_{\Gamma s})\in L^2(\Gamma)$ and   
\begin{align}
	& \int_\Omega \nabla  u_s\cdot \nabla z dx+\int_\Omega 
	 \bigl(\beta(u_s)+\pi(u_s)-\mu_s  \bigr) z dx
	+\int_\Gamma \nabla _\Gamma  u_{\Gamma s}\cdot \nabla _\Gamma z_\Gamma d \Gamma \nonumber\\
	&\quad {} + \int_\Gamma \bigl( 
	\beta_\Gamma (u_{\Gamma s})+\pi_\Gamma(u_{\Gamma s})-\mu_s  \bigr) 
	z_\Gamma d\Gamma=0,
	\quad \text{for all }\boldsymbol{z}=(z,z_\Gamma)\in \boldsymbol{V},
\label{staweak1}
\end{align}
with the constant $\mu_s$ given by \eqref{staweak2}. 
\end{definition}
\begin{remark}
The constraint 
$m(\boldsymbol{u}_s)= a$ for some given 
$ a \in (-1,1)$ in Definition \ref{weaksta} is not necessary for the stationary problem. 
It will play a role when we connect problem \eqref{GMSsta1}--\eqref{staweak2} to 
the corresponding evolution problem \eqref{GMS}, due to the mass conservation 
property \eqref{masscov} such that we need to set $ a =m(\boldsymbol{u}_0)=m_0$ (cf. {\rm (A4)}). 
\end{remark}
The following lemma provides a useful characterization on the steady states.
\begin{lemma}\label{solsta}
Assume that  $ a \in (-1,1)$ and {\rm (A1)--(A3)} are satisfied. We denote the set of steady states by 
$\mathcal{S}_{ a}$. 
There exist uniform constants $M_{ a}>0$ and $\delta_{ a} \in (0,1)$ 
such that  every steady state $\boldsymbol{u}_s=(u_s,u_{\Gamma s})\in \mathcal{S}_{ a} $
 and the constant $\mu_s$ satisfy
\begin{alignat}{4}
	-1+\delta_{a} & \leq u_s \leq 1-\delta_{a}, & \quad & \text{ in } \Omega,
	\label{sepsta1}\\
	-1+\delta_{a} & \leq u_{\Gamma s}\leq 1-\delta_{a}, & \quad & \text{ on } \Gamma,
	\label{sepsta2}\\
	|\mu_s | & \leq M_{a}. & & 
\label{bdmus}
\end{alignat}
Moreover, the set $\mathcal{S}_{a}$ is bounded in $H^3(\Omega)\times H^3(\Gamma)$.
\end{lemma}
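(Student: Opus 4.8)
The plan is to proceed in three stages: (i) a uniform $L^1$-control of the singular terms, which pins down $\mu_s$; (ii) a Moser-type iteration that upgrades this to a uniform $L^\infty$-bound on $\beta(u_s)$ and $\beta(u_{\Gamma s})$ and hence yields the separation; and (iii) an elliptic bootstrap up to $H^3$. Throughout one has to keep track that every constant produced depends only on $a$ and the fixed data $\beta,\beta_\Gamma,\pi,\pi_\Gamma,\Omega,\Gamma$, never on the individual steady state, so that the resulting $\delta_a$, $M_a$ are uniform over $\mathcal{S}_a$.

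For stage (i), I would test the weak identity \eqref{staweak1} with $\boldsymbol{z}=\boldsymbol{u}_s-a\boldsymbol{1}\in\boldsymbol{V}$. Since $m(\boldsymbol{u}_s)=a$, the terms carrying the constant $\mu_s$ cancel; using $\pi,\pi_\Gamma\in W^{1,\infty}(\mathbb{R})$ and $|u_s|,|u_{\Gamma s}|\le1$ to bound the $\pi$-contributions, one is left with
\[
\int_\Omega|\nabla u_s|^2\,dx+\int_\Gamma|\nabla_\Gamma u_{\Gamma s}|^2\,d\Gamma+\int_\Omega\beta(u_s)(u_s-a)\,dx+\int_\Gamma\beta_\Gamma(u_{\Gamma s})(u_{\Gamma s}-a)\,d\Gamma\le C,
\]
with $C=C(a,L,\Omega,\Gamma,\pi,\pi_\Gamma)$. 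Arguing as for \eqref{gms}--\eqref{gmsa} with $m_0$ replaced by $a$ (cf.\ \cite[Proposition~A.1]{MZ04}, \cite[Section~5]{GMS09}), there are $c_4(a),c_5(a)>0$ with $\beta(r)(r-a)\ge c_4(a)|\beta(r)|-c_5(a)$ and the analogue for $\beta_\Gamma$; discarding the nonnegative gradient integrals then gives $\|\beta(u_s)\|_{L^1(\Omega)}+\|\beta_\Gamma(u_{\Gamma s})\|_{L^1(\Gamma)}\le C_a$, and inserting this into the representation \eqref{staweak2} yields $|\mu_s|\le M_a$, i.e.\ \eqref{bdmus}.

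Stage (ii) is the heart of the matter and follows the iteration used to prove Lemma \ref{reguH2}, here in the easier stationary setting. Set $\widetilde{\mu}_s:=\mu_s-\pi(u_s)$ and $\widetilde{\mu}_{\Gamma s}:=\mu_s-\pi_\Gamma(u_{\Gamma s})$; by stage (i) and (A3) these lie in $L^\infty(\Omega)$, resp.\ $L^\infty(\Gamma)$, with norms at most $M_a+\|\pi\|_{L^\infty}+\|\pi_\Gamma\|_{L^\infty}$. For $k\in\mathbb{N}\setminus\{1\}$ and $p\ge2$, I would test \eqref{staweak1} with the admissible pair whose bulk component is $|\beta(u_{s,k})|^{p-2}\beta(u_{s,k})$ and whose boundary component is its trace $|\beta(u_{\Gamma s,k})|^{p-2}\beta(u_{\Gamma s,k})$, where $u_{s,k}:=h_k\circ u_s$, $u_{\Gamma s,k}:=h_k\circ u_{\Gamma s}$ with $h_k$ as in \eqref{apphk}. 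The two gradient integrals are nonnegative by monotonicity of $\beta,\beta_\Gamma$; exploiting that $\beta(h_k(r))$, $\beta(r)$ (and $\beta_\Gamma(r)$ on the boundary, via (A2)) share the same sign exactly as in \eqref{esabeta}--\eqref{beb2}, and applying Young's inequality with exponents chosen so that the emerging constants have bounded $p$-th roots, one arrives at
\[
\|\beta(u_{s,k})\|_{L^p(\Omega)}^p+\|\beta(u_{\Gamma s,k})\|_{L^p(\Gamma)}^p\le C_p\bigl(\|\widetilde{\mu}_s\|_{L^\infty}^p+\|\widetilde{\mu}_{\Gamma s}\|_{L^\infty}^p+1\bigr),
\]
with $C_p=C_p(c_1,c_2,|\Omega|,|\Gamma|,p)$ and $\sup_p C_p^{1/p}<\infty$. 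Taking $p$-th roots, letting $p\to\infty$ and then $k\to\infty$ (Fatou), we obtain $\|\beta(u_s)\|_{L^\infty(\Omega)}+\|\beta(u_{\Gamma s})\|_{L^\infty(\Gamma)}\le C_a$. Since $\beta(r)\to\pm\infty$ as $r\to\pm1$ by (A1), there is $\delta_a\in(0,1)$ depending only on $C_a$ such that $|\beta(r)|\le C_a$ forces $r\in[-1+\delta_a,1-\delta_a]$; this is \eqref{sepsta1}--\eqref{sepsta2}.

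Finally, for stage (iii), once \eqref{sepsta1}--\eqref{sepsta2} hold the functions $\beta,\beta',\beta_\Gamma,\beta_\Gamma'$ are bounded on $[-1+\delta_a,1-\delta_a]$; feeding $\beta(u_s),\beta_\Gamma(u_{\Gamma s})\in L^\infty$, $\mu_s$ constant and (A3) into the elliptic regularity theory for the bulk--surface system (Lemma \ref{esLepH2}) applied to \eqref{GMSsta1} gives first a uniform bound for $\boldsymbol{u}_s$ in $\boldsymbol{W}$. The chain rule together with this $\boldsymbol{W}$-bound and the boundedness of $\beta',\pi',\beta_\Gamma',\pi_\Gamma'$ on the relevant range then shows that $\mu_s-\beta(u_s)-\pi(u_s)\in H^1(\Omega)$ and $\mu_s-\beta_\Gamma(u_{\Gamma s})-\pi_\Gamma(u_{\Gamma s})\in H^1(\Gamma)$ with uniform norms, and the $H^3$-version of the estimate of Lemma \ref{esLepH2} upgrades the regularity to $\|u_s\|_{H^3(\Omega)}+\|u_{\Gamma s}\|_{H^3(\Gamma)}\le C_a$, so that $\mathcal{S}_a$ is bounded in $H^3(\Omega)\times H^3(\Gamma)$. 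The main obstacle is stage (ii): one has to run the $L^p\!\to\!L^\infty$ iteration while monitoring the $p$-dependence of all constants so that the $p$-th roots converge, and at the same time absorb the normal derivative $\partial_{\boldsymbol{\nu}}u_s$ arising from the bulk integration by parts through the boundary equation in \eqref{GMSsta1}, which couples the bulk and boundary $L^p$-norms precisely via assumption (A2).
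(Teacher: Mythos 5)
Your proposal is correct, and stages (i) and (iii) coincide with the paper's own argument: the same test function $\boldsymbol{u}_s-a\boldsymbol{1}$ in \eqref{staweak1} combined with \eqref{gms}--\eqref{gmsa} and \eqref{staweak2} gives \eqref{bdmus}, and the same elliptic bootstrap via Lemma \ref{esLepH2} gives the $H^3$ bound. Where you genuinely diverge is stage (ii). The paper does not run a Moser iteration at all: it exploits the fact that $\mu_s$ is a \emph{constant} bounded by $M_a$ together with the blow-up of $\beta,\beta_\Gamma$ at $\pm1$ to choose $\delta_a$ so that $\beta(r)+\pi(r)-M_a\ge 1$ and $\beta_\Gamma(r)+\pi_\Gamma(r)-M_a\ge 1$ on $[1-\delta_a,1]$ (with the symmetric condition near $-1$), and then tests \eqref{staweak1} with the Stampacchia truncations $[\boldsymbol{u}_s-(1-\delta_a)\boldsymbol{1}]^+$ and $[\boldsymbol{u}_s+(1-\delta_a)\boldsymbol{1}]^-$; the resulting inequality forces both truncations to vanish a.e., which is \eqref{sepsta1}--\eqref{sepsta2} in one stroke. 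Your $L^p\!\to\!L^\infty$ iteration on $\beta(u_s)$, $\beta(u_{\Gamma s})$ is instead the stationary analogue of the argument the paper deploys for the \emph{instantaneous} separation in Part (2) of the proof of Theorem \ref{septhm}; it is valid here in both $n=2$ and $n=3$ because $\widetilde{\mu}_s$ and $\widetilde{\mu}_{\Gamma s}$ are trivially in $L^\infty$ (a bounded constant minus a bounded perturbation), so the Trudinger--Moser machinery is not needed, and the $p$-dependence of the constants can be tracked exactly as in that proof so that the $p$-th roots stay bounded. The trade-off: the paper's truncation argument is shorter and entirely elementary, while your iteration is heavier bookkeeping but more robust -- it would survive if $\mu_s$ were merely a bounded function rather than a constant. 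Both routes produce constants depending only on $a$ and the fixed data, hence uniform over $\mathcal{S}_a$, as required.
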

\begin{proof}
The proof follows from the idea in \cite[Lemma 6.1]{GMS09}. 
Since $\boldsymbol{u}_s=(u_s,u_{\Gamma s})\in \mathcal{S}_{a} $, 
then by (A1) we have $\|u_s\|_{L^\infty(\Omega)}\leq 1$, 
$\|u_{\Gamma s}\|_{L^\infty(\Gamma)}\leq 1$. 
Taking $\boldsymbol{z}=\boldsymbol{u}_s-{a}\boldsymbol{1}$ in \eqref{staweak1}, we have
\begin{align}
	& \int_\Omega  \bigl| \nabla  (u_s-{a})  \bigr|^2 dx
	+\int_\Omega \beta(u_s)(u_s-{ a}) dx
	+\int_\Gamma  \bigl| \nabla _\Gamma  (u_{\Gamma s}-{a}) {\bigr|}^2 d \Gamma \nonumber\\
	&\qquad 
	+ \int_\Gamma \beta_\Gamma (u_{\Gamma s})(u_{\Gamma s}-{a}) d\Gamma\nonumber\\
	&\quad = -\int_\Omega \pi(u_s)(u_s-{a}) dx
	-\int_\Gamma \pi_\Gamma(u_{\Gamma s})(u_{\Gamma s}-{a}) d\Gamma
	\nonumber\\
	&\quad \leq  C,\nonumber
\end{align}
where $C$ 
may depend on $|\Omega|$, $|\Gamma|$, $L$ and ${ a}$. 
From the above estimate and \eqref{gms}, \eqref{gmsa}, \eqref{staweak2}, 
we can easily conclude \eqref{bdmus}. 
We note that $M_{ a}$ is independent of $\boldsymbol{u}_s$. Using (A1) again, 
there exists $\delta_{ a}\in (0,1)$ such that 
\begin{alignat*}{6}
	&\beta(r)+\pi(r)-M_{ a} \geq 1,&  \quad 
	& \beta_\Gamma(r)+\pi_\Gamma(r)-M_{ a} \geq 1,& \quad & 
	\text{for all } r\in[1-\delta_{ a},1],\\
	&\beta(r)+\pi(r)+M_{ a}  \leq -1, & \quad 
	& \beta_\Gamma(r)+\pi_\Gamma(r)+M_{ a} \leq -1,& \quad 
	& \text{for all } r\in[-1,-1+\delta_{ a}],
\end{alignat*}
$\delta_{a}$ is also independent of $\boldsymbol{u}_s$.
Taking the test function in \eqref{staweak1} as 
$\boldsymbol{z}= [\boldsymbol{u}_s-(1-\delta_{a})\boldsymbol{1} ]^+$, 
$\boldsymbol{z}= [\boldsymbol{u}_s+ (1-\delta_{ a})\boldsymbol{1} ]^-$, 
respectively,  where 
$[\boldsymbol{u}]^+:=([u]^+,[u_\Gamma]^+)
:=(\max\{0,u\}, \max\{0,u_\Gamma \})$, 
$ [\boldsymbol{u}]^-:=([u]^-,[u_\Gamma]^-)
:=(-\min\{0,u\}, -\min\{0,u_\Gamma\})$ for $\boldsymbol{u}:=(u,u_\Gamma)$.
 Then, we infer that 
\begin{align*}
	&  \int_\Omega 
	\bigl[ u_s-(1-\delta_a) \bigr]^+  dx
	+\int_\Gamma
	\bigl [ u_{\Gamma s}-(1-\delta_a) \bigr ] ^+
	d \Gamma  \\
	& {} \quad  + \int_\Omega  \bigl | 
	\nabla   \bigl[ u_s-(1-\delta_{ a})  \bigr]^+  \bigr |^2 dx
	+\int_\Gamma \bigl | \nabla _\Gamma  
	 \bigl [ u_{\Gamma s}-(1-\delta_{ a}) \bigr ] ^+
	 \bigr | ^2 d \Gamma \leq 0,\\
	&  \int_\Omega 
	\bigl[ u_s+(1-\delta_a) \bigr]^- dx
	+\int_\Gamma
	\bigl [ u_{\Gamma s}+(1-\delta_a) \bigr ] ^-
	d \Gamma  \\
	& {} \quad  +
	\int_\Omega   \bigl | 
	\nabla   \bigl [ u_s+ (1-\delta_{ a}) \bigr]^-  \bigr | ^2 dx
	+\int_\Gamma  \bigl | \nabla _\Gamma 
	 \bigl [ u_{\Gamma s}+ (1-\delta_{ a}) \bigr]^- \bigr |^2 d \Gamma \leq 0,
\end{align*}
which leads to \eqref{sepsta1}--\eqref{sepsta2}. 
Finally, the separation property and assumptions (A1)--(A2) enable us to apply the elliptic regularity theory (see Lemma \ref{esLepH2}) to conclude that $\boldsymbol{u}_s\in H^3(\Omega)\times H^3(\Gamma)$. 
\end{proof}

Returning to the evolution problem \eqref{GMS}, for any initial datum $\boldsymbol{u}_0 $ satisfying (A4), we define the $\omega $-limit set $\omega (\boldsymbol{u}_0)$
as follows:
\begin{equation*}
	\omega (\boldsymbol{u}_0):=
	\left\{
	\boldsymbol{u}_\infty
	\in [H^{2r}(\Omega)\times H^{2r}(\Gamma)] \cap \Phi_{m_0}:
	\begin{array}{l}
	\mbox{there exists } \{t_n \}_{n \in \mathbb{N}}
	\ \mbox{with } t_n \nearrow \infty \\
	\mbox{such that }
	\boldsymbol{u}(t_n) \to \boldsymbol{u}_\infty \\ {\rm in~}H^{2r}(\Omega)\times H^{2r}(\Gamma) 
	\mbox{ as } n \to \infty
	\end{array}
	\right\},
\end{equation*}
for $r\in[ 1/2,1)$. 

Then we show the relationship between $\omega (\boldsymbol{u}_0)$ and the set of steady states $\mathcal{S}_{m_0}$.

\begin{lemma}\label{ome}
Under the assumptions {\rm (A1)--(A4)},
$\omega (\boldsymbol{u}_0)$ is a non-empty, connected and compact set in 
$H^{2r}(\Omega)\times H^{2r}(\Gamma)$ for $r\in[ 1/2,1)$. Moreover, 
\begin{align}
\omega (\boldsymbol{u}_0)\subset \mathcal{S}_{m_0}\label{omeS}
\end{align}
such that every element
$\boldsymbol{u}_\infty:=(u_\infty ,u_{\Gamma ,\infty }) \in \omega (\boldsymbol{u}_0)$ is a strong solution to the elliptic boundary value problem
\eqref{GMSsta} with the constant $\mu_\infty$ determined by \eqref{staweak2}. 
\end{lemma}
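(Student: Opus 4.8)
The plan is to run the standard argument for gradient-like semiflows, using the uniform higher-order estimates of Section~3, the mass conservation~\eqref{masscov}, the energy identity of Lemma~\ref{energyeq}, and the continuous dependence estimate~\eqref{unique1}. First I would observe that, for every $\eta>0$, \eqref{est32} and~\eqref{est34} give $\boldsymbol{u}\in W^{1,\infty}(\eta,\infty;\mathcal{V}_{\sigma,0}^*)\cap L^\infty(\eta,\infty;\boldsymbol{W})$; interpolating between $\boldsymbol{W}$ and $\mathcal{V}_{\sigma,0}^*$ yields uniform (H\"older) continuity of $t\mapsto\boldsymbol{u}(t)$ from $[\eta,\infty)$ into $H^{2r}(\Omega)\times H^{2r}(\Gamma)$ for $r\in[1/2,1)$, and since $\boldsymbol{W}\hookrightarrow\hookrightarrow H^{2r}(\Omega)\times H^{2r}(\Gamma)$ for $r<1$, the orbit $\{\boldsymbol{u}(t):t\ge\eta\}$ is relatively compact there. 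The topological properties then follow in the usual way: $\omega(\boldsymbol{u}_0)=\bigcap_{s\ge\eta}\overline{\{\boldsymbol{u}(t):t\ge s\}}$ is a nested intersection of non-empty compact sets, each connected (being the closure of the continuous image of the connected interval $[s,\infty)$), hence non-empty, compact and connected in $H^{2r}(\Omega)\times H^{2r}(\Gamma)$.

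Next comes the identification $\omega(\boldsymbol{u}_0)\subset\mathcal{S}_{m_0}$. By the energy equality~\eqref{einequal} (equivalently~\eqref{equality}, $\eta>0$ arbitrary) the map $t\mapsto E(\boldsymbol{u}(t))$ is non-increasing, and it is bounded below by~\eqref{c1c2}; hence $E(\boldsymbol{u}(t))\searrow E_\infty$ for some finite $E_\infty$, with $\int_0^\infty\|\boldsymbol{u}'(s)\|_{\mathcal{V}_{\sigma,0}^*}^2\,ds<\infty$. Fix $\boldsymbol{u}_\infty\in\omega(\boldsymbol{u}_0)$ and $t_n\nearrow\infty$ with $\boldsymbol{u}(t_n)\to\boldsymbol{u}_\infty$ in $H^{2r}(\Omega)\times H^{2r}(\Gamma)$. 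Passing to the limit in $m(\boldsymbol{u}(t_n))=m_0$ and using that $|u_\infty|,|u_{\Gamma,\infty}|\le 1$ a.e.\ together with $\widehat{\beta},\widehat{\beta}_\Gamma\in C^0([-1,1])$, one gets $\boldsymbol{u}_\infty\in\Phi_{m_0}$, so $S(t)\boldsymbol{u}_\infty$ is well defined by Proposition~\ref{semigroup}. Consider the time shifts $\boldsymbol{u}^n(t):=\boldsymbol{u}(t+t_n)=S(t)\boldsymbol{u}(t_n)$ on $[0,T]$: by~\eqref{unique1} they converge to $S(\cdot)\boldsymbol{u}_\infty$ in $C([0,T];\mathcal{V}_{\sigma,0}^*)$, and since they are equicontinuous into $H^{2r}(\Omega)\times H^{2r}(\Gamma)$ with values in a fixed compact set, Arzel\`a--Ascoli upgrades this to $\boldsymbol{u}^n\to S(\cdot)\boldsymbol{u}_\infty$ in $C([0,T];H^{2r}(\Omega)\times H^{2r}(\Gamma))$. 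As all these functions take values in $[-1,1]$, the functional $E$ is \emph{continuous} along this convergence (the Dirichlet terms because $H^{2r}\hookrightarrow H^1$, and the $\widehat{\beta},\widehat{\beta}_\Gamma,\widehat{\pi},\widehat{\pi}_\Gamma$ terms by dominated convergence), so $E(S(t)\boldsymbol{u}_\infty)=\lim_n E(\boldsymbol{u}(t+t_n))=E_\infty$ for every $t\ge 0$. Feeding $E(S(\cdot)\boldsymbol{u}_\infty)\equiv E_\infty$ into~\eqref{einequal} for the trajectory issuing from $\boldsymbol{u}_\infty$ gives $\int_0^t\|(S(\cdot)\boldsymbol{u}_\infty)'(s)\|_{\mathcal{V}_{\sigma,0}^*}^2\,ds=0$ for all $t$, whence $S(t)\boldsymbol{u}_\infty\equiv\boldsymbol{u}_\infty$, i.e.\ $\boldsymbol{u}_\infty$ is a stationary solution. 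For such a solution the associated chemical potential $\boldsymbol{\mu}_\infty$ satisfies $a_\sigma(\boldsymbol{\mu}_\infty,\boldsymbol{z})=0$ for all $\boldsymbol{z}\in\mathcal{V}_\sigma$, so the generalized Poincar\'e inequality forces $\boldsymbol{\mu}_\infty$ to be a constant $\mu_\infty$; then~\eqref{p4}--\eqref{p5} reduce to~\eqref{GMSsta}, and integrating the bulk equation over $\Omega$ and the boundary equation over $\Gamma$ and adding yields the formula~\eqref{staweak2} for $\mu_\infty$, while the elliptic regularity of Lemma~\ref{solsta} (via Lemma~\ref{esLepH2}) gives $\boldsymbol{u}_\infty\in H^3(\Omega)\times H^3(\Gamma)$. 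This proves~\eqref{omeS}.

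The step I expect to be the main obstacle is the identification of the limits of the shifted trajectories and the energy bookkeeping it requires: the continuous dependence estimate~\eqref{unique1} only controls the $\mathcal{V}_{\sigma,0}^*$-norm, whereas compactness of $\omega(\boldsymbol{u}_0)$ lives in the strong topology of $H^{2r}(\Omega)\times H^{2r}(\Gamma)$, so one must combine~\eqref{unique1} with the uniform bound~\eqref{est34} (through Arzel\`a--Ascoli, or an Aubin--Lions--Simon type argument) to upgrade the convergence, and then verify that the energy $E$ — in general only lower semicontinuous — is actually continuous along these sequences, which hinges crucially on the a priori confinement $|u|,|u_\Gamma|\le1$ and on $\widehat{\beta},\widehat{\beta}_\Gamma$ being continuous up to the endpoints $\pm 1$.
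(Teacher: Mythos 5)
Your proposal is correct and follows essentially the same route as the paper: the paper establishes relative compactness of the orbit in $H^{2r}(\Omega)\times H^{2r}(\Gamma)$ from \eqref{est34}, notes that $E$ is a strict Lyapunov function via \eqref{einequal}, and then invokes the classical LaSalle-type invariance result (citing Henry) together with Lemma \ref{solsta}, which is precisely the argument you have written out in detail. Your careful treatment of the continuity of $E$ along the shifted trajectories (using $|u|,|u_\Gamma|\le 1$ and $\widehat{\beta},\widehat{\beta}_\Gamma\in C^0([-1,1])$) and the upgrade of \eqref{unique1} to $H^{2r}$-convergence via compactness is exactly what is hidden in the paper's citation, and it is sound.
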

\begin{proof}
From the estimate \eqref{est34}, 
we see that the orbit $\{\boldsymbol{u}(t)\}_{t\geq \eta}$ is
 relatively compact in $H^{2r}(\Omega)\times H^{2r}(\Gamma)$ for 
 any $r\in[ 1/2,1)$. On the other hand, the free energy 
 $E(\boldsymbol{u})$ defined by \eqref{ene} serves as a strict {L}yapunov function for 
 the semigroup $S(t)$ (see \eqref{einequal}). 
 Therefore, the conclusion of the present lemma follows 
 from the well-known results in the dynamical system (see e.g., \cite[Theorem 4.3.3]{henry}) and Lemma \ref{solsta}. We also refer to \cite[Theorem 3.15]{GMS11} for an alternative proof with minor modifications due to assumptions on $\beta$, $\beta_\Gamma$.  
\end{proof}
\smallskip 

Lemmas \ref{solsta}  and \ref{ome} yield the propterty of uniform separation from pure states $\pm 1$ for any element of the $\omega $-limit set $\omega (\boldsymbol{u}_0)$ (see \eqref{sepsta1}, \eqref{sepsta2} and \eqref{omeS}). This essential fact enables us to prove the eventual separation property for global weak solutions to problem \eqref{GMS}.
\medskip 

\noindent 
\textbf{Proof of Theorem \ref{septhm}. Part (1)}. It follows from the definition of $\omega (\boldsymbol{u}_0)$ that 
\begin{equation*}
	\lim_{t\to \infty} \mathrm{dist}  \bigl( 
	S(t) \boldsymbol{u}_0, \omega(\boldsymbol{u}_0) 
	 \bigr) 
	=0\quad \text{in } H^{2r}(\Omega)\times H^{2r}(\Gamma).
\end{equation*}
By the Sobolev embedding theorem, we see that 
$H^{2r}(\Omega) \times H^{2r}(\Gamma) 
\hookrightarrow C(\overline{\Omega})\times C(\Gamma)$ 
when $r\in ( n/4,1)$ ($n=2,3$). Then thanks to Lemmas \ref{solsta}  and \ref{ome}, we can conclude \eqref{esepe} with the choice 
$$\delta_1=\frac{1}{2}\delta_{m_0},$$ 
where the constant $\delta_{m_0}$ is determined as in Lemma \ref{solsta}. \hfill $\Box$

\subsection{Instantaneous separation from pure states in two dimensional case}
The improved instantaneous separation property can be achieved by some further higher-order estimates for global weak solutions that only depend on an upper bound for the initial energy $E(\boldsymbol{u}_0)$ and on the average of the total mass $m_0$. In this case, the spatial dimension ($n=2$) and the appearance of the surface diffusion ($\sigma>0$) turn out to be crucial due to the Trudinger--Moser inequality (see Lemma \ref{TM}) and the available regularity on $\boldsymbol{\mu}$ (see \eqref{est33}).

\begin{lemma}\label{bedep}
Let $n=2$, $\sigma>0$. 
For any $1\le p<\infty $ and $\eta>0$, there exists a positive constant $C(p,\eta)$ such that
\begin{equation}
	 \bigl \| \beta' (u)  \bigr \|_{L^p(t,t+1;L^p(\Omega ))}
	+  \bigl \| \beta'(u_\Gamma )  \bigr \|_{L^p(t,t+1;L^p(\Gamma))} 
	\le C(p,\eta),\quad \text{for all }t \ge \eta. 
	\label{esbedep}
\end{equation}
\end{lemma}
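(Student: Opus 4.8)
The plan is to deduce the bound on $\beta'(u)$ and $\beta'(u_\Gamma)$ from the \emph{exponential integrability} of $\beta(u)$ and $\beta(u_\Gamma)$, via assumption \eqref{A5}. Since \eqref{A5} is a pointwise inequality on $(-1,1)$, it applies equally with argument $u(t,x)$, $x\in\Omega$, or with the boundary trace $u_\Gamma(t,x)$, $x\in\Gamma$, giving $|\beta'(u)|^p\le e^{pc_0}\,e^{pc_0|\beta(u)|}$ and $|\beta'(u_\Gamma)|^p\le e^{pc_0}\,e^{pc_0|\beta(u_\Gamma)|}$. Hence \eqref{esbedep} will follow once we establish that for every $\lambda>0$ there is a constant $C(\lambda,\eta)$ with
\[
\sup_{t\ge\eta}\left(\int_\Omega e^{\lambda|\beta(u(t))|}\,dx+\int_\Gamma e^{\lambda|\beta(u_\Gamma(t))|}\,d\Gamma\right)\le C(\lambda,\eta),
\]
because then $\int_t^{t+1}\!\int_\Omega|\beta'(u(s))|^p\,dx\,ds\le e^{pc_0}C(pc_0,\eta)$ and likewise on $\Gamma$, which is \eqref{esbedep}. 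Note that \eqref{A5} is used only for $\beta'$, consistently with the fact that the statement involves $\beta'(u_\Gamma)$ (the bulk nonlinearity evaluated on the trace) rather than $\beta_\Gamma'(u_\Gamma)$.

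The key quantitative step is to upgrade \eqref{est341} of Lemma \ref{reguH2} into the growth bound
\[
\bigl\|\beta(u(t))\bigr\|_{L^p(\Omega)}+\bigl\|\beta(u_\Gamma(t))\bigr\|_{L^p(\Gamma)}\le C\sqrt p,\qquad\text{for all }p\ge2,\ t\ge\eta,
\]
with $C$ depending on $\eta$, $E(\boldsymbol{u}_0)$ and $m_0$ but \emph{not} on $p$. For this I would re-run the test-function computation behind \eqref{II}: multiplying \eqref{p4a} by $|\beta(u_k)|^{p-2}\beta(u_k)$ and integrating, the second-order term and the contribution coming from $\partial_{\boldsymbol{\nu}}u$ through \eqref{p5a} carry the favourable monotone sign exactly as in the proof of Lemma \ref{reguH2} (cf.\ also \eqref{esabeta}), so that $\|\beta(u_k)\|_{L^p(\Omega)}^p+\|\beta(u_{\Gamma,k})\|_{L^p(\Gamma)}^p\le C\,\|\widetilde\mu\|_{L^p(\Omega)}^p+C\,\|\widetilde\mu_\Gamma\|_{L^p(\Gamma)}^p+C$. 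Now the dimension $n=2$ enters: the two-dimensional Sobolev inequality gives $\|\widetilde\mu\|_{L^p(\Omega)}\le C\sqrt p\,\|\widetilde\mu\|_{H^1(\Omega)}$, while on the one-dimensional manifold $\Gamma$ one has $\|\widetilde\mu_\Gamma\|_{L^p(\Gamma)}\le C\,\|\widetilde\mu_\Gamma\|_{V_\Gamma}$ with $C$ independent of $p$. Combining this with $\widetilde\mu=\mu-\pi(u)\in L^\infty(\eta,\infty;V)$ and $\widetilde\mu_\Gamma=\mu_\Gamma-\pi_\Gamma(u_\Gamma)\in L^\infty(\eta,\infty;V_\Gamma)$ — here $\sigma>0$ is used, see Lemma \ref{muV} and Lemma \ref{reguH2} — and then letting $k\to\infty$ through the Fatou lemma yields the claimed $\sqrt p$ growth.

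Granted that bound, the exponential integrability follows by expanding the exponential into its Taylor series: for any $\lambda>0$,
\[
\int_\Omega e^{\lambda|\beta(u(t))|}\,dx=\sum_{k\ge0}\frac{\lambda^k}{k!}\int_\Omega|\beta(u(t))|^k\,dx\le|\Omega|+\sum_{k\ge1}\frac{\lambda^k\,(C\sqrt k)^k}{k!}\le|\Omega|+\sum_{k\ge1}(\lambda Ce)^k\,k^{-k/2}<\infty,
\]
where we used $k!\ge(k/e)^k$, and the last series converges since $k^{-k/2}$ decays faster than any geometric rate; the same computation applies on $\Gamma$. (Alternatively, once the $\sqrt p$ growth is available one could invoke the Trudinger--Moser inequality of Lemma \ref{TM}, but the series argument is self-contained.) Inserting these bounds into the pointwise estimates from \eqref{A5} and integrating over $(t,t+1)$ gives \eqref{esbedep}. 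I expect the main obstacle to be precisely the sharp $p$-dependence in the previous step: the exponential series converges for every $\lambda$ only because $\|\beta(u(t))\|_{L^p}$ (and $\|\beta(u_\Gamma(t))\|_{L^p}$) grows no faster than $\sqrt p$ — equivalently because we are in the Trudinger--Moser regime available solely when $n=2$, and, for the boundary term, because $\sigma>0$ provides $\widetilde\mu_\Gamma\in L^\infty(\eta,\infty;V_\Gamma)\hookrightarrow L^\infty(\Gamma)$ on the one-dimensional $\Gamma$; the argument has no counterpart for $n=3$ or $\sigma=0$.
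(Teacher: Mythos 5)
Your proposal is correct, but it reaches the crucial exponential integrability of $\beta(u)$ and $\beta(u_\Gamma)$ by a genuinely different mechanism than the paper. The paper tests \eqref{p4a} and \eqref{p5a} directly with the weighted functions $\beta(u_k)e^{K|\beta(u_k)|}$ and $\beta(u_{\Gamma,k})e^{K|\beta(u_{\Gamma,k})|}$, absorbs the right-hand side with the generalized Young inequality of Lemma~\ref{young}, and then invokes the Trudinger--Moser inequality (Lemma~\ref{TM}) for $e^{N|\widetilde\mu|}$ in the bulk and the embedding $V_\Gamma\hookrightarrow C(\Gamma)$ on the boundary; the bound on $\int e^{K|\beta(u_k)|}$ is then extracted via $e^{Kr}\le e^K+r^2e^{Kr}$ and the limit $k\to\infty$ is taken at the very end through a.e.\ convergence and the Lions lemma. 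You instead keep the polynomial test functions $|\beta(u_k)|^{p-2}\beta(u_k)$ of Lemma~\ref{reguH2}, extract the quantitative growth $\|\beta(u(t))\|_{L^p}+\|\beta(u_\Gamma(t))\|_{L^p}\le C\sqrt p$ from the $O(\sqrt p)$ constant in the two-dimensional embedding $V\hookrightarrow L^p(\Omega)$ (and the $p$-independent constant for $V_\Gamma\hookrightarrow L^\infty(\Gamma)$, where $\sigma>0$ is used), and then sum the Taylor series of the exponential. The two routes are equivalent in substance --- your $\sqrt p$-growth statement is exactly the dual formulation of Trudinger--Moser --- but yours trades the paper's somewhat delicate weighted test function (whose gradient must be checked to lie in $H$) for careful bookkeeping of the $p$-dependence of constants: the Young/H\"older manipulations in the proof of Lemma~\ref{reguH2} produce factors like $\tfrac1p\bigl(\tfrac{2(p-1)}{p}\bigr)^{p-1}$ and $(2c_2)^p|\Gamma|$, which are at most exponential in $p$ and therefore wash out to bounded constants after taking $p$-th roots, so the $\sqrt p$ growth does survive; you correctly flag this as the main obstacle and it resolves as you expect. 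One cosmetic remark: your argument actually yields the stronger pointwise-in-time bound $\sup_{t\ge\eta}\int_\Omega e^{\lambda|\beta(u(t))|}\,dx<\infty$, whereas the paper only records the time-integrated version, which is all that \eqref{esbedep} requires.
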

\begin{proof}
The proof relies on the idea of \cite[Lemma~7.1]{MZ04} for the {C}ahn--{H}illiard 
equation with homogeneous {N}eumann boundary conditions (see, also \cite[Lemma~5.1]{GGM17}). Nevertheless, necessary modifications have to be made in order to handle the current complicated boundary condition. 

For any $k\in \mathbb{N}\setminus \{1\}$ and $K>0$, let $(u_k, u_{\Gamma,k})$ be defined as \eqref{uk}. Because $u_k$ belongs to the bounded interval $[-1+1/k, 1-1/k]$, then we see from (A1) and the assumption \eqref{A5} that 
\begin{align*}
	\nabla \bigl( \beta (u_k) e^{K|\beta (u_k)|}\bigr)
	= \beta '(u_k)  \Bigl( 1+ K \bigl| \beta (u_k)\bigr| \Bigr) 
	e^{K|\beta (u_k)|}\nabla u_k \in C\bigl([\eta,\infty );H \bigr),
\end{align*}
and $\bigl(\beta (u_k) e^{K|\beta (u_k)|}\bigr)_{|_\Gamma}=\beta \bigl( u_{\Gamma, k}(s) \bigr)e^{K|\beta (u_{\Gamma, k}(s))|}$.
Therefore, testing the equation \eqref{p4a} by $\beta (u_k) e^{K|\beta (u_k)|}$, we get
\begin{align}
	&\int_{\Omega }^{} \bigl( \nabla u(s) \cdot \nabla u_{k}(s) \bigr)
	\beta ' \bigl(u_k(s) \bigr) 
	 \Bigl( 1+ K \bigl| \beta \bigl(u_k(s) \bigr)\bigr| \Bigr) 
	e^{K|\beta(u_k(s))|}dx
	\nonumber \\
	&\qquad 
	- \int_{\Gamma }^{} \partial _{\boldsymbol{\nu }} u(s)
	\beta \bigl( u_{\Gamma, k}(s) \bigr)e^{K|\beta (u_{\Gamma, k}(s))|}d\Gamma
	+ \int_{\Omega }^{}
	\beta \bigl( u(s) \bigr) \beta \bigl( u_k(s) \bigr) e^{ K|\beta(u_k(s))|}dx
	\nonumber \\
	&\quad 
	= \int_{\Omega }^{} \widetilde{\mu }(s)\beta \bigl( u_k(s) \bigr) e^{K|\beta(u_k(s))|}dx,\quad \text{for a.a.\ }s\ge \eta. 
	\label{comp1}
\end{align}
Next, we note that 
\begin{align*}
	\nabla_\Gamma  \bigl( \beta (u_{\Gamma, k}) e^{K|\beta (u_{\Gamma, k})|}\bigr)
	& = \beta '(u_{\Gamma, k})  \Bigl(  1+ K \bigl| \beta (u_{\Gamma, k})\bigr| \Bigr) 
	e^{K|\beta (u_{\Gamma, k})|}\nabla_\Gamma  u_{\Gamma, k}
	\\
	& \in C\bigl([\eta,\infty );H_\Gamma \bigr),
\end{align*}
Then, testing \eqref{p5a}
by
$\beta (u_{\Gamma, k}) e^{K|\beta (u_{\Gamma, k})|}$, we get
\begin{align}
	&\int_{\Gamma }^{} \partial _{\boldsymbol{\nu }} u(s)
	\beta \bigl( u_{\Gamma, k}(s)\bigr)e^{K|\beta (u_{\Gamma, k}(s) )|}d\Gamma
	\nonumber \\
	&\quad  +
	\int_{\Gamma }^{} \bigl( \nabla_\Gamma u_\Gamma (s) \cdot \nabla_\Gamma  u_{\Gamma, k}(s) \bigr)
	\beta ' \bigl(u_{\Gamma, k}(s) \bigr)  \Bigl( 1+ K \bigl| \beta \bigl(u_{\Gamma,k}(s) \bigr)\bigr| \Bigr)
	e^{K|\beta(u_{\Gamma, k}(s))|}d\Gamma
	\nonumber \\
	&\quad 
	+ \int_{\Gamma }^{}
	\beta_\Gamma \bigl( u_\Gamma (s) \bigr) \beta \bigl( u_{\Gamma,k }(s) \bigr)
	e^{K|\beta(u_{\Gamma, k}(s))|}d\Gamma\nonumber\\
	&\ \, = \int_{\Gamma}^{} \widetilde{\mu }_\Gamma (s)\beta \bigl( u_{\Gamma, k}(s) \bigr)
	e^{K|\beta(u_{\Gamma, k}(s))|}d\Gamma,\quad \text{for a.a.\ }s \ge \eta.
	\label{comp2}
\end{align}
From (A1), we see that the first term on the left hand side of \eqref{comp1} and the second term on the left
hand side of  \eqref{comp2} are nonnegative. Then adding \eqref{comp1} and \eqref{comp2} together, we infer from \eqref{esabeta}, \eqref{beb1} and \eqref{beb2} that 
\begin{align}
	&\int_{\Omega }^{}
	\beta \bigl( u_k(s) \bigr)^2 e^{K|\beta(u_k(s))|}dx
	+ \frac{1}{c_1}
	\int_{\Gamma }^{}
	\beta \bigl( u_{\Gamma,k }(s) \bigr)^2
	e^{K|\beta(u_{\Gamma, k}(s))|}d\Gamma
	\nonumber \\
	&\quad \le
	 \int_{\Omega }^{}
	 \bigl| \widetilde{\mu }(s) \bigr|  \bigl| \beta \bigl( u_k(s) \bigr)  \bigr| e^{K|\beta(u_k(s))|}dx
	+ \int_{\Gamma}^{} \left(\bigl| \widetilde{\mu }_\Gamma (s) \bigr|+\frac{c_2}{c_1}\right) 
	 \bigl| \beta \bigl( u_{\Gamma, k}(s) \bigr)  \bigr| 
	e^{K|\beta(u_{\Gamma, k}(s))|} d\Gamma\nonumber\\
	&\quad { {}=: {} } J_1+J_2, \quad \text{for a.a.\ }s \ge \eta.
	\label{JJ}
\end{align}
Applying the generalized {Y}oung inequality
given by Lemma~\ref{young}, there exist positive constants $N$ and $M_5$ that may depend on $K$, $c_1$ but is independent of $k$ such that
\begin{equation*}
	\bigl| \widetilde{\mu }(s) \bigr| \beta \bigl( u_k(s) \bigr)
	e^{K|\beta(u_k(s))|}
	\le
	e^{N| \tilde{\mu }(s)|}+\frac{1}{2}\beta \bigl( u_k(s) \bigr)^2
	e^{K|\beta(u_k(s))|}+M_5,
\end{equation*}
and 
\begin{align}
	&\left(\bigl| \widetilde{\mu }_\Gamma (s) \bigr|+\frac{c_2}{c_1}\right) 
	 \bigl| \beta \bigl( u_{\Gamma, k}(s) \bigr)  \bigr|
	e^{K|\beta(u_{\Gamma, k}(s))|}\nonumber\\
	&\quad \leq e^{N \left(| \widetilde{\mu }_\Gamma (s) |+\frac{c_2}{c_1}\right)}+ \frac1{2c_1} \beta \bigl( u_{\Gamma,k }(s) \bigr)^2
	e^{K|\beta(u_{\Gamma, k}(s))|}+M_5,
	\label{esY2}
\end{align}
for a.a.\ $s \ge \eta$. 
Then we can control $J_1$ as follows (see \cite[Lemma~7.1]{MZ04}): in view of the estimate \eqref{est33}, we can employ the {T}rudinger--{M}oser type inequality given by Lemma~\ref{TM} to conclude that there exists a positive constant
$\widetilde{C}_{\rm TM}$ depending on $C_{\rm TM}$ and $N$ such that
\begin{equation*}
	\int_{\Omega }^{}
	e^{N|\tilde{\mu} (s)|} dx
	\le \widetilde{C}_{\rm TM}
	e^{\widetilde{C}_{\rm TM} \|\tilde{\mu }(s)\|_{V}^2}, \quad 
	\text{ for a.a.\ }s \ge \eta.
\end{equation*}
Hence, 
\begin{align}
J_1\leq \widetilde{C}_{\rm TM}
	e^{\widetilde{C}_{\rm TM} \|\tilde{\mu }(s)\|_{V}^2}
	+ \frac12 \int_\Omega\beta \bigl( u_k(s) \bigr)^2
	e^{K|\beta(u_k(s))|} dx+ M_5|\Omega|.
	\label{J1}
\end{align}
Next, we deduce from the embedding 
$V_\Gamma\hookrightarrow C(\Gamma)$ that 
\begin{align*}
\int_\Gamma e^{N\left(| \widetilde{\mu }_\Gamma (s) |+\frac{c_2}{c_1}\right)} d\Gamma 
\le |\Gamma|e^{N\left(\| \widetilde{\mu }_\Gamma (s) \|_{V_\Gamma}+\frac{c_2}{c_1}\right)}, \quad 
	\text{ for a.a.\ }s \ge \eta,
\end{align*}
which together with \eqref{esY2} implies 
\begin{align}
J_2&\leq |\Gamma|e^{N\left(\| \widetilde{\mu }_\Gamma (s) \|_{V_\Gamma}+\frac{c_2}{c_1}\right)}+ \frac{1}{2c_1}
	\int_{\Gamma }^{}
	\beta \bigl( u_{\Gamma,k }(s) \bigr)^2
	e^{K|\beta(u_{\Gamma, k}(s))|}d\Gamma
	+M_5|\Gamma|. 
	\label{J2}
\end{align}
Combining \eqref{JJ}, \eqref{J1} and \eqref{J2}, we arrive at
\begin{align}
&\int_{\Omega }^{}
	\beta \bigl( u_k(s) \bigr)^2 e^{K|\beta(u_k(s))|}dx
	+ \frac{1}{c_1}
	\int_{\Gamma }^{}
	\beta \bigl( u_{\Gamma,k }(s) \bigr)^2
	e^{K|\beta(u_{\Gamma, k}(s))|}d\Gamma
	\nonumber \\
	&\quad \le 2\widetilde{C}_{\rm TM}
	e^{\widetilde{C}_{\rm TM} \|\tilde{\mu }(s)\|_{V}^2}
	+
	2|\Gamma|e^{N\left(\| \widetilde{\mu }_\Gamma (s) \|_{V_\Gamma}
	+\frac{c_2}{c_1}\right)}+ 2M_5  \bigl( |\Omega|+|\Gamma|  \bigr),
	\label{JJJ}
\end{align}
for a.a. $s \ge \eta$.
 
For any fixed $p \ge 1$, we take $K:=pc_0$, where the constant $c_0$ is given in \eqref{A5}. Then we deduce from the assumptions \eqref{A5}, $\sigma>0$ and estimates \eqref{est33}, \eqref{JJJ} that 
\begin{align}
	& \int_{t}^{t+1} \! \! \int_{\Omega }^{}
	\bigl| \beta' \bigl( u_k(s) \bigr) \bigr|^p dx
	ds
	+ \frac{1}{c_1}
	\int_{t}^{t+1} \! \! \int_{\Gamma }^{}
	\bigl| \beta' \bigl( u_{\Gamma,k }(s) \bigr) \bigr|^p d\Gamma
	ds
	\nonumber \\
	& \quad \le
	\int_{t}^{t+1} \! \! \int_{\Omega }^{}
	\bigl( e^{c_0 |\beta ( u_k(s) )|+c_0} \bigr)^p dx
	ds 
	+ \frac{1}{c_1}
	\int_{t}^{t+1} \! \! \int_{\Gamma }^{}
	\bigl( e^{c_0 |\beta ( u_{\Gamma, k}(s) )|+c_0} \bigr)^p d\Gamma
	ds
	\nonumber \\
	& \quad = C \int_{t}^{t+1} \left(
	\int_{\Omega }^{}
	e^{K |\beta ( u_k(s) )|}dx
	+ \frac{1}{c_1}
	\int_{\Gamma }^{}
	e^{K |\beta ( u_{\Gamma, k}(s) )|} d\Gamma \right) ds
	\nonumber \\
	& \quad \le C \int_{t}^{t+1}
	\left(
	e^K |\Omega |
	+ \int_{\Omega }^{} \beta  \bigl( u_k(s)  \bigr)^2
	e^{K |\beta ( u_k(s) )|} dx
	+
	\frac{e^K}{c_1} |\Gamma |
	\right.
	\nonumber \\
	&
	\left. \qquad {}
	+ \frac{1}{c_1}
	\int_{\Gamma }^{} \beta  \bigl( u_{\Gamma, k}(s)  \bigr)^2
	e^{K |\beta ( u_{\Gamma, k}(s) )|} d\Gamma
	\right) ds
	\nonumber \\
	& \quad \le
	2C\widetilde{C}_{\rm TM}
	e^{\widetilde{C}_{\rm TM} M_3^2}
	+
	2C|\Gamma|e^{N\left(M_3+\frac{c_2}{c_1}\right)} 
	+ 2CM_5  \bigl( |\Omega|+|\Gamma|  \bigr) \nonumber\\
	&\qquad +	
	Ce^{K} \bigl(|\Omega |
	+ c_1^{-1}|\Gamma | \bigr), \quad \text{for all }t \ge \eta,
	\label{lions}
\end{align}
where we have used the fact that, if
$r<1$ then $e^{Kr}\le e^K$, and if $r \ge 1$ then
$e^{Kr} \le r^2 e^{Kr}$, and the positive constant $C$ is independent of $k$.
 
Since we already know that
$|u|<1$ a.e.\ in $Q$
and $|u_\Gamma | <1$ a.e.\ on $\Sigma$, then 
$u_k \to u$ a.e.\ in $Q$ and $u_{\Gamma, k} \to u_\Gamma $
a.e.\ on $\Sigma$ as $k \to \infty $, which imply
\begin{align*}
&	\beta '(u_k) \to \beta '(u)
	\qquad {\rm a.e.\ in~} Q, \\
&	\beta '(u_{\Gamma, k}) \to \beta '(u_\Gamma )
	\quad {\rm a.e.\ in~} \Sigma.
\end{align*}
On the other hand, by virtue of the Lions lemma \cite[Lemme~1.3, Chapitre~1]{Lio68} and the uniform estimate \eqref{lions}, we see that 
\begin{align*}
&	\beta '(u_k) \to \beta '(u)
	\qquad {\rm weakly~in~} L^p  \bigl ( t,t+1;L^p(\Omega ) \bigr), \\
&	\beta '(u_{\Gamma, k}) \to \beta '(u_\Gamma )
	\quad {\rm weakly~in~} L^p  \bigl( t,t+1;L^p(\Gamma ) \bigr),
\end{align*}
as $k \to \infty $, for all $t \ge \eta$. Finally, taking $\liminf_{k \to \infty }$ in \eqref{lions} we easily arrive at the conclusion \eqref{esbedep}.
\end{proof}

Lemma \ref{bedep} enables us to obtain some improved estimates on the time derivative of weak solution.

\begin{lemma}\label{impesut}
Let $n=2$, $\sigma>0$. 
 For any $\eta>0$, there exists a positive constant $M_6$ such that
\begin{align}
	&\|\boldsymbol{u}'\|_{L^\infty (2\eta,\infty ;\boldsymbol{H})}\le M_6,\label{est36a}\\
	&
	\int_{t}^{t+1}  \bigl \| \boldsymbol{u}'(s) \bigr\|_{\boldsymbol{W}}^2 ds\le M_6,
	\quad \text{for all } t \ge 2\eta. 
	\label{est36b}
\end{align}
\end{lemma}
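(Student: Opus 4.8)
The plan is to differentiate the equation in time and to close a uniform Gronwall argument for $\|\boldsymbol{u}'\|_{\boldsymbol{H}}^2$, the genuinely new input being the $L^p$-bound of Lemma~\ref{bedep}; the hypotheses $n=2$ and $\sigma>0$ enter precisely through the embeddings $H^1(\Omega)\hookrightarrow L^q(\Omega)$ for all $q<\infty$ and through $\boldsymbol{\mu}\in L^\infty(\eta,\infty;\mathcal{V}_\sigma)$ from Lemma~\ref{muV}. By Remark~\ref{regrem} we may work, for $t\ge\eta$, with the strong equations \eqref{aein}--\eqref{aeon}, \eqref{p4}--\eqref{p5} and with $\boldsymbol{\mu}\in L^2(\eta,T;\mathcal{W}_\sigma)$. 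Taking the difference quotient $\partial_t^h$ in time, testing the time-differentiated mass equation against $\partial_t^h\boldsymbol{u}$, integrating by parts using the time-differentiated identities $-\Delta\partial_t^h u=\partial_t^h\mu-\partial_t^h\beta(u)-\partial_t^h\pi(u)$ and its boundary counterpart, and letting $h\to0$ (exactly as in the approximation/limit scheme of the proof of Lemma~\ref{energyeq}), one is led to an identity
\[
\frac12\frac{d}{dt}\|\boldsymbol{u}'(t)\|_{\boldsymbol{H}}^2+\|\boldsymbol{\mu}'(t)\|_{\boldsymbol{H}}^2=\bigl(\boldsymbol{\beta}'(\boldsymbol{u})\boldsymbol{u}'+\boldsymbol{\pi}'(\boldsymbol{u})\boldsymbol{u}',\,\boldsymbol{\mu}'\bigr)_{\boldsymbol{H}}-(1-\sigma)\!\int_\Gamma\mu_\Gamma'\,\Delta_\Gamma u_\Gamma'\,d\Gamma,\qquad\text{a.a.\ }t\ge\eta,
\]
where $\boldsymbol{u}'=\partial_t\boldsymbol{u}$, $\boldsymbol{\mu}'=\partial_t\boldsymbol{\mu}$, and $\boldsymbol{\beta}'(\boldsymbol{u})\boldsymbol{u}':=(\beta'(u)u',\beta_\Gamma'(u_\Gamma)u_\Gamma')$.

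The crucial term is $(\boldsymbol{\beta}'(\boldsymbol{u})\boldsymbol{u}',\boldsymbol{\mu}')_{\boldsymbol{H}}$. For its bulk part I would use the two-dimensional Ladyzhenskaya inequality $\|u'\|_{L^4(\Omega)}^2\le C\|u'\|_{L^2(\Omega)}\|u'\|_{H^1(\Omega)}$ to get, for every $\varepsilon_0>0$,
\[
\Bigl|\int_\Omega\beta'(u)u'\mu'\,dx\Bigr|\le\|\beta'(u)\|_{L^4(\Omega)}\|u'\|_{L^4(\Omega)}\|\mu'\|_{L^2(\Omega)}\le\tfrac18\|\mu'\|_{L^2(\Omega)}^2+\varepsilon_0\|u'\|_{H^1(\Omega)}^2+C_{\varepsilon_0}\|\beta'(u)\|_{L^4(\Omega)}^4\|u'\|_{L^2(\Omega)}^2,
\]
together with the analogous estimate on $\Gamma$ (using the trace of $u'$ and the one-dimensional Sobolev embeddings on $\Gamma$); the $\boldsymbol{\pi}$-part is handled trivially with $|\pi'|,|\pi_\Gamma'|\le L$, and the last term in the identity is $\le\tfrac18\|\mu_\Gamma'\|_{L^2(\Gamma)}^2+(1-\sigma)^2\|\Delta_\Gamma u_\Gamma'\|_{L^2(\Gamma)}^2$. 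All the remaining $\boldsymbol{W}$-order quantities ($\|u'\|_{H^1(\Omega)}^2$, $\|u_\Gamma'\|_{H^1(\Gamma)}^2$, $\|\Delta_\Gamma u_\Gamma'\|_{L^2(\Gamma)}^2$) are absorbed via elliptic regularity (Lemma~\ref{esLepH2}) applied to $-\Delta u'=\mu'-\beta'(u)u'-\pi'(u)u'$ with its dynamic boundary counterpart, i.e.\ $\|\boldsymbol{u}'\|_{\boldsymbol{W}}\le C(\|\boldsymbol{\mu}'\|_{\boldsymbol{H}}+\|\boldsymbol{\beta}'(\boldsymbol{u})\boldsymbol{u}'\|_{\boldsymbol{H}}+\|\boldsymbol{u}'\|_{\boldsymbol{H}})$, whereupon the nonlinear piece $\|\boldsymbol{\beta}'(\boldsymbol{u})\boldsymbol{u}'\|_{\boldsymbol{H}}$ reappearing there is split once more by Hölder's inequality and the $2$D embeddings. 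Choosing $\varepsilon_0$ (and then the constants in the elliptic bound) small enough so that everything of order $\|\boldsymbol{\mu}'\|_{\boldsymbol{H}}^2$ and $\|\boldsymbol{u}'\|_{\boldsymbol{W}}^2$ is reabsorbed, one obtains
\[
\frac{d}{dt}\|\boldsymbol{u}'(t)\|_{\boldsymbol{H}}^2+c\,\|\boldsymbol{\mu}'(t)\|_{\boldsymbol{H}}^2\le g(t)\bigl(1+\|\boldsymbol{u}'(t)\|_{\boldsymbol{H}}^2\bigr),\qquad\text{a.a.\ }t\ge\eta,
\]
with $c>0$ and, by Lemma~\ref{bedep} (with $p=4$), Lemma~\ref{muV} and Lemma~\ref{reguH2}, $\int_t^{t+1}g(s)\,ds\le C$ for every $t\ge\eta$, with $C$ depending only on $\eta$, $E(\boldsymbol{u}_0)$, $m_0$, $\Omega$, $\Gamma$ (through $M_1,\dots,M_4$ and $C(4,\eta)$).

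Granting this differential inequality, estimate \eqref{est36a} follows from the uniform Gronwall lemma (Lemma~A.2, applied with $t_0=\eta$ and $r=\eta$, which explains the threshold $2\eta$) since $\int_t^{t+1}\|\boldsymbol{u}'(s)\|_{\boldsymbol{H}}^2\,ds\le\int_t^{t+1}\|\boldsymbol{u}'(s)\|_{\boldsymbol{V}}^2\,ds\le M_2$ by \eqref{est32}. Then \eqref{est36b} follows by integrating the differential inequality over $[t,t+1]$ for $t\ge2\eta$ (which, after using \eqref{est36a}, gives $\int_t^{t+1}\|\boldsymbol{\mu}'(s)\|_{\boldsymbol{H}}^2\,ds\le C$) and invoking the elliptic estimate $\|\boldsymbol{u}'\|_{\boldsymbol{W}}\le C(\|\boldsymbol{\mu}'\|_{\boldsymbol{H}}+\|\boldsymbol{\beta}'(\boldsymbol{u})\boldsymbol{u}'\|_{\boldsymbol{H}}+\|\boldsymbol{u}'\|_{\boldsymbol{H}})$ once more, with $\int_t^{t+1}\|\boldsymbol{\beta}'(\boldsymbol{u})\boldsymbol{u}'\|_{\boldsymbol{H}}^2\,ds$ controlled via the Ladyzhenskaya inequality, \eqref{est36a} and Lemma~\ref{bedep}.

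The main obstacle is exactly the term $(\boldsymbol{\beta}'(\boldsymbol{u})\boldsymbol{u}',\boldsymbol{\mu}')_{\boldsymbol{H}}$: at this stage $\beta'(u)$ is only known to be bounded in $L^p(t,t+1;L^p(\Omega))$ uniformly in $t$ (Lemma~\ref{bedep}), not uniformly in time, so the coefficient it generates in front of $\|\boldsymbol{u}'\|_{\boldsymbol{H}}^2$ is merely integrable over unit time intervals and a naive Gronwall argument is unavailable; the way out is to trade, via the $2$D Ladyzhenskaya interpolation and elliptic regularity, one power of the $H^1$-norm of $\boldsymbol{u}'$ (which gets reabsorbed) against a power of $\|\beta'(u)\|_{L^4}$ controlled by Lemma~\ref{bedep}, and then to invoke the uniform Gronwall lemma. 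The ancillary (and more routine) technical points are the careful ordering of the small parameters in the reabsorption and, when $\sigma\neq1$, the control of the extra boundary term $(1-\sigma)\int_\Gamma\mu_\Gamma'\Delta_\Gamma u_\Gamma'\,d\Gamma$ through the $\|\boldsymbol{\mu}'\|_{\boldsymbol{H}}^2$-dissipation combined with the $\boldsymbol{W}$-elliptic estimate.
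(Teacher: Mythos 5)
Your overall strategy is the paper's: take difference quotients in time, derive a differential inequality for $\|\boldsymbol{u}'\|_{\boldsymbol{H}}^2$ with a $\boldsymbol{W}$-order dissipation, control the critical term coming from $\beta'(u)u'$ by trading a two-dimensional interpolation inequality against Lemma~\ref{bedep} with $p=4$, close with the uniform Gronwall lemma (whence the threshold $2\eta$), and recover \eqref{est36b} by integrating the inequality and invoking elliptic regularity. The only substantive stylistic difference is in the splitting of the nonlinear term: the paper keeps $\|\Delta\partial_t^h u\|_H^2+\sigma\|\partial_t^h\mu_\Gamma\|_{H_\Gamma}^2$ as dissipation and bounds $\|\partial_t^h\beta(u)\|_H$ by $\bigl(\|\beta'(u(s+h))\|_H+\|\beta'(u(s))\|_H\bigr)\|\partial_t^h u\|_{L^\infty(\Omega)}$, using the convexity of $\beta'$ from (A1) together with the 2D Agmon inequality and the identity $\|\boldsymbol{z}\|_{\boldsymbol{W}}\le C\|\partial\varphi(\boldsymbol{z})\|_{\boldsymbol{H}_0}$ on $\boldsymbol{W}\cap\boldsymbol{V}_0$, whereas you keep $\|\mu'\|_H^2+\sigma\|\mu_\Gamma'\|_{H_\Gamma}^2$ and use the $L^4$--$L^4$--$L^2$ H\"older split with Ladyzhenskaya. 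Both routes consume the same input ($\beta'(u)\in L^4(t,t+1;L^4)$ uniformly in $t$) and both close; note that if you work with difference quotients rather than formally with $\partial_t\beta(u)=\beta'(u)u'$, you still need the convexity of $\beta'$ to replace $\int_0^1\beta'(\tau u(s+h)+(1-\tau)u(s))\,d\tau$ by the convex combination of $\beta'(u(s+h))$ and $\beta'(u(s))$, exactly as the paper does.

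There is, however, one concrete slip that would make your absorption step fail as written. When you integrate $a_\sigma(\boldsymbol{\mu}',\boldsymbol{u}')$ by parts and substitute the time-differentiated constitutive relations, the leftover boundary term is $-(1-\sigma)\int_\Gamma\mu_\Gamma'\,\partial_{\boldsymbol{\nu}}u'\,d\Gamma$ (and the dissipation carries the weight $\sigma$ on $\|\mu_\Gamma'\|_{H_\Gamma}^2$), not $-(1-\sigma)\int_\Gamma\mu_\Gamma'\,\Delta_\Gamma u_\Gamma'\,d\Gamma$. This matters: your proposed bound $\frac18\|\mu_\Gamma'\|_{H_\Gamma}^2+(1-\sigma)^2\|\Delta_\Gamma u_\Gamma'\|_{H_\Gamma}^2$ places the fixed, non-small constant $(1-\sigma)^2$ in front of a full $\boldsymbol{W}$-order quantity, which cannot be reabsorbed through the elliptic estimate by the available dissipation unless $|1-\sigma|$ happens to be small. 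With the correct term the difficulty disappears, because $\partial_{\boldsymbol{\nu}}u'$ is sub-critical: by the trace theorem and Lemma~\ref{inter}, $\|\partial_{\boldsymbol{\nu}}u'\|_{H_\Gamma}^2\le\zeta\|u'\|_{H^2(\Omega)}^2+C_\zeta\|u'\|_H^2$ with $\zeta>0$ arbitrarily small, which is precisely how the paper absorbs it in \eqref{4}. Two further minor points: the place where $\sigma>0$ is indispensable is Lemma~\ref{bedep} itself (which needs $\widetilde{\mu}_\Gamma\in V_\Gamma$) and the boundary dissipation $\sigma\|\mu_\Gamma'\|_{H_\Gamma}^2$, rather than Lemma~\ref{muV}, which holds for $\sigma\ge0$; and your chain for \eqref{est36b} is fine, since the elliptic bound of Lemma~\ref{esLepH2} together with \eqref{est36a}, \eqref{est32} and Lemma~\ref{bedep} controls $\int_t^{t+1}\|\boldsymbol{\beta}'(\boldsymbol{u})\boldsymbol{u}'\|_{\boldsymbol{H}}^2\,ds$ as you claim.
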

\begin{proof}
Taking the difference of \eqref{aein} at $t=s$ and $t=s+h$, we have
\begin{equation}
	\partial _t \bigl( \partial _t^h u(s) \bigr)-\Delta \partial _t^h \mu (s)=0
	\quad {\rm a.e.\ in~} \Omega
	\label{diaein}
\end{equation}
for a.a.\ $s \ge \eta$. 
Analogously, we obtain from \eqref{aeon} that
\begin{equation}
	\partial _t \bigl( \partial _t^h u_\Gamma (s) \bigr)
	+\partial _{\boldsymbol{\nu }} \partial _t^h \mu (s)
	-\sigma\Delta_\Gamma  \partial _t^h \mu _\Gamma (s)=0
	\quad {\rm a.e.\ on~} \Gamma
	\label{diaeon}
\end{equation}
for a.a.\ $s \ge \eta$.
Multiplying  \eqref{diaein} by $\partial _t^h u(s)$, integrating over $\Omega$,  we get
\begin{align}
	& \frac{1}{2} \frac{d}{ds}  \bigl \| \partial _t^h u(s)  \bigr \|_H^2
	- \int_{\Omega }^{}
	 \partial _t^h \mu (s) \Delta \partial _t^h u(s) dx
	- \int_{\Gamma }^{} \partial _{\boldsymbol{\nu }}
	\partial _t^h \mu (s)
	\partial _t^h u_\Gamma (s) d\Gamma \nonumber\\
	&\quad +\int_\Gamma \partial_t^h \mu_\Gamma(s)  \partial _{\boldsymbol{\nu }}\partial _t^h u (s) d\Gamma =0.
	\label{ad1}
\end{align}
Next, multiplying  \eqref{diaeon} by $\partial _t^h u_\Gamma (s)$, integrating over $\Omega$,  we get
\begin{align}
	\frac{1}{2} \frac{d}{ds}  \bigl \| \partial _t^h u_\Gamma (s)  \bigr \|_{H_\Gamma }^2
	+ \int_{\Gamma }^{} \partial _{\boldsymbol{\nu }}
	\partial _t^h \mu (s)
	\partial _t^h u_\Gamma (s) d\Gamma
	+ \sigma \int_{\Gamma }^{}
	\nabla_\Gamma  \partial _t^h \mu_\Gamma  (s)
	\cdot \nabla_\Gamma  \partial _t^h u_\Gamma (s) d\Gamma
	=0.
	\label{ad2}
\end{align}
On the other hand, taking differences of equations \eqref{p4} and \eqref{p5} at $t=s$ and $t=s+h$, respectively,
we have
\begin{align}
	&\partial _t^h \mu (s)= - \Delta \partial _t^h u(s)
	+\partial _t^h \beta \bigl( u(s)\bigr)+ \partial _t^h \pi \bigl( u(s)\bigr)
	\quad {\rm a.e.\ in~} \Omega,
	\label{di1}\\
	&\partial _t^h \mu _{\Gamma }(s)
	= \partial _{\boldsymbol{\nu }} \partial _t^h u(s)
	- \Delta _\Gamma \partial _t^h u_\Gamma (s)
	+ \partial _t^h \beta_\Gamma \bigl( u_\Gamma (s)\bigr) +\partial_t^h \pi_\Gamma \bigl( u_\Gamma (s)\bigr)
	\quad {\rm a.e.\ on~} \Gamma,
	\label{di2}
\end{align}
for a.a.\ $s \ge \eta$. 
Multiplying  \eqref{di2} by  $\sigma\partial _t^h \mu_\Gamma  (s)$, integrating over $\Gamma$, we get 
\begin{align*}
	\sigma  \bigl \| \partial _t^h \mu _\Gamma (s) \bigr \|_{H_\Gamma }^2  
	&= \sigma \int_{\Gamma}^{}
	\partial _{\boldsymbol{\nu }} \partial _t^h u(s) \partial _t^h \mu _\Gamma (s) d\Gamma
	+ \sigma \int_{\Gamma }^{}
	 \nabla_\Gamma  \partial _t^h u_\Gamma (s) \cdot \nabla_\Gamma  \partial _t^h \mu_\Gamma  (s)
	 d\Gamma
	\nonumber \\
	&\quad 
	+ \sigma \int_{\Gamma }^{}\partial _t^h \bigl( \beta_\Gamma \bigl( u_\Gamma (s)\bigr) + \pi_\Gamma \bigl( u_\Gamma (s)\bigr)\bigr)	\partial _t^h \mu_\Gamma  (s) d\Gamma,
\end{align*}
for a.a.\ $s \ge \eta$.
Adding this with \eqref{ad1} and \eqref{ad2}, using \eqref{di1} and Young's inequality, we obtain that 
\begin{align}
	& \frac{1}{2} \frac{d}{ds}  \Bigr( \bigl \| \partial _t^h u(s)  \bigr \|_H^2
	+   \bigl \| \partial _t^h u_\Gamma (s)  \bigr \|_{H_\Gamma }^2  \Bigr )
 	+  \bigl \| \partial _t^h \Delta u (s)  \bigr \|_H^2
	+  \sigma  \bigl \| \partial _t^h \mu _\Gamma (s)  \bigr \|_{H_\Gamma }^2
	\nonumber \\
	& \quad = (\sigma-1) \int_{\Gamma}^{}
	\partial _{\boldsymbol{\nu }} \partial _t^h u(s) \partial _t^h \mu _\Gamma (s) d\Gamma + \int_\Omega \Delta \partial _t^h u(s)
	\left(\partial _t^h \beta \bigl( u(s)\bigr)+ \partial _t^h \pi \bigl( u(s)\bigr)\right) dx 
	\nonumber\\
	& \qquad {} + \sigma \int_{\Gamma }^{}\partial _t^h \bigl( \beta_\Gamma \bigl( u_\Gamma (s)\bigr) + \pi_\Gamma \bigl( u_\Gamma (s)\bigr)\bigr)	\partial _t^h \mu_\Gamma  (s) d\Gamma\nonumber\\
	& \quad  \leq 	\frac{1}{2}  \bigl \| \partial _t^h \Delta u (s)  \bigr \|_H^2
	+  \frac{\sigma}{2}  \bigl \| \partial _t^h \mu _\Gamma (s)  \bigr \|_{H_\Gamma }^2 
	+ \frac{3(\sigma-1)^2}{2\sigma} \bigl \| 
	\partial _{\boldsymbol{\nu }} \partial _t^h u(s)  \bigr \|_{H_\Gamma}^2
	+   \bigl \| \partial _t^h \beta \bigl( u(s)\bigr)  \bigr \|_H^2
	\nonumber\\
	& \qquad {}+  \bigl\| \partial _t^h \pi \bigl( u(s)\bigr)  \bigr \|_H^2
	+ \frac{3\sigma}{2}  \bigl\|  \partial _t^h \beta_\Gamma \bigl( u_\Gamma (s)\bigr)  \bigr \|_{H_\Gamma }^2
	+ \frac{3\sigma}{2}  \bigl\|  \partial _t^h \pi_{ \Gamma} \bigl( u_\Gamma (s)\bigr)  \bigr \|_{H_\Gamma }^2,
	\label{0}
\end{align}
for a.a.\ $s \ge \eta$. Besides, we see from \eqref{di2} that
\begin{align}
	  \bigl\|  \partial _t^h \mu _\Gamma (s)  \bigr \|_{H_\Gamma }^2
	&\geq \frac{1}{3}   \bigl\| \partial _{\boldsymbol{\nu }} \partial _t^h u(s)
	- \Delta _\Gamma \partial _t^h u_\Gamma (s)  \bigr \|_{H_\Gamma }^2
	-  \bigl \| \partial _t^h \beta_\Gamma \bigl( u_\Gamma (s)\bigr)  \bigr \|_{H_\Gamma }^2\nonumber\\
	& \quad 
	-  \bigl \| \partial_t^h \pi_\Gamma \bigl( u_\Gamma (s)\bigr)  \bigr \|_{H_\Gamma }^2.\label{0a}
\end{align}
Now we proceed to estimate the right hand side of \eqref{0}. First, from the {L}ipschitz continuity of $\pi $, we have
\begin{equation}
	 \bigl \| \partial _t^h \pi \bigl( u (s)\bigr)  \bigr \|_{H}^2
	\le L^2 \bigl \| \partial _t^h  u (s)  \bigr \| _{H}^2,
	\quad  \bigl \| \partial _t^h \pi_\Gamma \bigl( u_\Gamma  (s)\bigr) \bigr \|_{H_\Gamma }^2
	\le L^2  \bigl \| \partial _t^h  u_\Gamma  (s)  \bigr \|_{H_\Gamma }^2.
	\label{1}
\end{equation}
Next, from the convexity of $\beta '$  (see (A1)), we get
\begin{align}
	 \bigl \| \partial _t^h \beta \bigl( u(s)\bigr)  \bigr \|_H^2
	& = \int_{\Omega }^{} \left| \int_{0}^{1}
	\beta '\bigl( \tau u(s+h)+(1-\tau ) u(s) \bigr)\, \partial _{t}^h u(s) 
	d\tau \right|^2 dx
	\nonumber \\
	& \le \left\|
	\int_{0}^{1}
	\left(  \tau
	\beta '\bigl( u(s+h) \bigr) + (1-\tau )
	\beta ' \bigl( u(s) \bigr)
	\right)
	d\tau \right\|_{H}^2
	 \bigl \| \partial _{t}^h u(s)  \bigr \|_{L^\infty(\Omega )}^2
	\nonumber \\
	& \le \left[
	\int_{0}^{1}
	 \Bigl(  \tau  \bigl \|
	\beta '\bigl( u(s+h) \bigr)  \bigr \|_{H} + (1-\tau )
	 \bigl \| \beta ' \bigl( u(s) \bigr)  \bigr \|_{H}
	 \Bigr)
	d\tau \right] ^2
	 \bigl \| \partial _{t}^h u(s)  \bigr \|_{L^\infty(\Omega )}^2
	\nonumber \\
	& \le \left(  \frac{1}{2}
	 \bigl \| \beta '\bigl( u(s+h) \bigr) \bigr \|_{H}
	+ \frac{1}{2}
	\bigl \|\beta ' \bigl( u(s) \bigr)  \bigr \|_{H}  \right) ^2
	 \bigl \| \partial _{t}^h u(s)  \bigr \|_{L^\infty(\Omega )}^2
	\nonumber \\
	& \le \Bigl(
	 \bigl \| \beta '\bigl( u(s+h) \bigr) \bigr \|_{H}^2
	+
	 \bigl \|  \beta ' \bigl( u(s) \bigr)  \bigr \|_{H}^2 \Bigr)
	 \bigl \|  \partial _{t}^h u(s)  \bigr \|_{L^\infty(\Omega )}^2.
	\label{2}
\end{align}
Analogously,
\begin{equation}
	 \bigl \| \partial _t^h \beta_\Gamma \bigl( u_\Gamma (s)\bigr)  \bigr \|_{H_\Gamma }^2
	\le \Bigl(
	 \bigl \| \beta '_\Gamma\bigl( u_\Gamma (s+h) \bigr) \bigr \|_{H_\Gamma}^2
	+
	 \bigl \| \beta '_\Gamma \bigl( u_\Gamma (s) \bigr)  \bigr \|_{H_\Gamma}^2 \Bigr)
	 \bigl \| \partial _{t}^h u_\Gamma (s)  \bigr \|_{L^\infty(\Gamma )}^2.
	\label{3}
\end{equation}
Finally, by the trace theorem and Lemma \ref{inter}, we see that for some $r\in ( 3/2, 2)$, it holds
\begin{align}
	 \bigl \|\partial _{\boldsymbol{\nu }} \partial _t^h u(s)  \bigr \|_{H_\Gamma}^2
	&\leq C  \bigl \| \partial _t^h u(s)  \bigr \|_{H^r(\Omega)}^2\leq 
	\zeta  \bigl \| \partial _t^hu(s)  \bigr \|_{H^2(\Omega)}^2
	+ C_\zeta  \bigl \| \partial _t^hu(s)  \bigr \|_{H}^2,
	\label{4}
\end{align}
for any $\zeta>0$. 

For each
$\boldsymbol{z} \in \boldsymbol{W} \cap \boldsymbol{V}_0$,
it holds
\begin{equation*}
	\bigl( \partial \varphi (\boldsymbol{z}), \boldsymbol{z} \bigr)_{\boldsymbol{H}_0}
	= a_{{1}}(\boldsymbol{z},\boldsymbol{z})
	= \|\boldsymbol{z}\|_{\boldsymbol{V}_0}^2 {}:= \| \boldsymbol{z} \|_{{\mathcal V}_{1,0}}^2.
\end{equation*}
From the generalized {P}oincar\'e inequality given by Lemma~\ref{Poinc} and the elliptic regularity theory (e.g., Lemma \ref{esLepH2}), we have 
\begin{align}
	\|\boldsymbol{z}\|_{\boldsymbol{W}}\leq 
	C \bigl \| \partial \varphi (\boldsymbol{z}) \bigr \|_{\boldsymbol{H}_0}, 
	\quad \text{for all }\boldsymbol{z} \in \boldsymbol{W} \cap \boldsymbol{V}_0.
	\label{5}
\end{align}
Thus, by the two dimensional Agmon inequality (see \cite[Chapter II, (1.40)]{Tem97}) we infer that for any $\boldsymbol{z} \in \boldsymbol{W} \cap \boldsymbol{V}_0$,
\begin{align}
	&\|z\|_{L^\infty(\Omega )}^2 \le C\|z\|_{W}\|z\|_H
	\le  \bigl \| \partial \varphi (\boldsymbol{z}) \bigr \|_{\boldsymbol{H}_0}
	\| \boldsymbol{z} \|_{\boldsymbol{H}_0},
	\label{6}
\end{align}
and by the Sobolev embedding theorem 
\begin{align}
	&\|z_\Gamma \|_{L^\infty(\Gamma  )}^2 \le C\|z_\Gamma\|_{V_\Gamma}^2\le C\|\boldsymbol{z}\|_{\boldsymbol{V}}^2
	\le
	C \bigl \| \partial \varphi (\boldsymbol{z})  \bigr \|_{\boldsymbol{H}_0}
	\| \boldsymbol{z} \|_{\boldsymbol{H}_0}.
	\label{7}
\end{align}
Now combining \eqref{0}--\eqref{7} and taking the constant $\zeta>0$ to be sufficiently small, we obtain that 
\begin{align}
	& \frac{d}{ds}  \bigl \| \partial _t^h \boldsymbol{u}(s)  \bigr \|_{\boldsymbol{H}_0}^2
	+ \min\left\{\frac{1}{2},\ \frac{\sigma}{6}\right\} \bigl \|
	\partial \varphi
	\bigl( \partial _t^h \boldsymbol{u}(s)
	\bigr)
	\bigr \|_{\boldsymbol{H}_0}^2
	\nonumber \\
	&\quad  \le C \mathcal{B}(s)
	 \bigl \| \partial \varphi \bigl(
	\partial _t^h \boldsymbol{u}(s) \bigr) \bigr \|_{\boldsymbol{H}_0}
	 \bigl \|\partial _t^h \boldsymbol{u}(s)  \bigr \|_{\boldsymbol{H}_0}
	+ C  \bigl \| \partial _t^h \boldsymbol{u}(s)  \bigr \|_{\boldsymbol{H}_0}^2
	\nonumber \\
	&\quad  \le
	\min\left\{\frac{1}{4},\ \frac{\sigma}{12}\right\}
	 \bigl \| \partial \varphi \bigl(
	\partial _t^h \boldsymbol{u}(s) \bigr)  \bigr \|_{\boldsymbol{H}_0}^2
	+ C  \bigl( \mathcal{B}(s)^2+1  \bigr)
	 \bigl \| \partial _t^h \boldsymbol{u}(s)  \bigr \|_{\boldsymbol{H}_0}^2,
	\label{8}
\end{align}
for a.a.\ $s \ge \eta$, where
\begin{align*}
	\mathcal{B}(s) 
	& :=	
	 \bigl \| \beta '\bigl( u(s+h) \bigr)\bigr \|_{H}^2
	+
	 \bigl \| \beta ' \bigl( u(s) \bigr)  \bigr \|_{H}^2
	+
	 \bigl \| \beta_\Gamma '\bigl( u_\Gamma (s+h) \bigr) \bigr \|_{H_\Gamma}^2\nonumber\\
	& \quad 
	+
	 \bigl \| \beta_\Gamma ' \bigl( u_\Gamma (s) \bigr)  \bigr \|_{H_\Gamma}^2.
\end{align*}
Thanks to \eqref{est32} and Lemma \ref{bedep}, 
 for all $\eta_1>0$ 
there exists a positive constant 
 $\widetilde{C}(4,\eta_1)$
depending on $M_2$ and  $C(4,\eta_1)$ such that
\begin{equation*}
	\int_{t}^{t+\eta_1}  \bigl \| \partial _t^{h}
	\boldsymbol{u}(s)  \bigr \|_{\boldsymbol{H}_0}^2 ds \le \widetilde{C}(4,\eta_1),
	\quad \int_{t}^{t+\eta_1}\mathcal{B}(s)^2 ds \le \widetilde{C}(4,\eta_1),
\end{equation*}
for all $t \ge \eta$, indeed, we can apply the same way of the proof of Lemma 3.3. Hence applying the uniform {G}ronwall inequality given by Lemma~A.2, we
deduce that
\begin{align}
	 \bigl \| \partial _t^h \boldsymbol{u}(t+\eta_1)  \bigr \|_{\boldsymbol{H}_0}^2
	&  
	 \le \left( \frac{1}{\eta_1} \int_t^{t+\eta_1} 
	 \bigl \| \partial_t ^h \boldsymbol{u}(s) \bigr \|_{\boldsymbol{H}_0}^2 ds \right)
        e^{\int_t^{t+\eta_1}C( {\mathcal B}(s)^2+1) ds  } 
	\nonumber \\
	& \le  \frac{1}{\eta_1} \widetilde{C}(4,\eta_1)
	e^{C(\widetilde{C}(4,\eta_1)+\eta_1)},
	\quad \text{for all }t \ge \eta.
	\label{9}
\end{align}
 Moreover, integrating \eqref{8} with respect to time and using \eqref{9}, we have 
\begin{align}
	&\min\left\{\frac{1}{4},\ \frac{\sigma}{12}\right\}\int_{t}^{t+1} \bigl \|
	\partial \varphi
	\bigl( \partial _t^h \boldsymbol{u}(s)
	\bigr)
	 \bigr \|_{\boldsymbol{H}_0}^2ds\nonumber\\
	&\quad  \le  \bigl \| \partial _t^h \boldsymbol{u}(t)  \bigr \|_{\boldsymbol{H}_0}^2 
	+ C\int_{t}^{t+1}  \bigl( \mathcal{B}(s)^2+1 \bigr)
	 \bigl \| \partial _t^h \boldsymbol{u}(s)  \bigr \|_{\boldsymbol{H}_0}^2ds
	\nonumber \\
	& \quad \le  \frac{1}{\eta_1} \widetilde{C}(4,\eta_1)
	e^{C(\widetilde{C}(4,\eta_1)+\eta_1)}
	\bigl[ 1+C \bigl( \widetilde{C}(4,1)+1 \bigr) \bigr].
	\label{10}
\end{align}
for all $t \ge \eta+\eta_1$.

For simplicity, we just take $\eta_1=\eta$. Letting $h \to 0$ in \eqref{9}, we obtain the estimate \eqref{est36a}.
Moreover, taking $h\to 0$ in \eqref{10}, we conclude from \eqref{5} the second estimate \eqref{est36b}. The proof is complete.
\end{proof}
\smallskip

We are now in a position to finish the proof of Theorem \ref{septhm}.

\smallskip
\textbf{Proof of Theorem \ref{septhm}. Part (2)}. 
Consider \eqref{aein}--\eqref{aeon} as an elliptic problem for $\boldsymbol{\mu}$. Recalling that now we assume $\sigma>0$, then by a similar reasoning for \eqref{5}, we infer from  \eqref{est36a} that 
\begin{align*}
	\bigl \| \boldsymbol{\mu }- m(\boldsymbol{\mu })  \bigr \|_{L^\infty (2\eta,\infty ;\boldsymbol{W})}\leq C\|\boldsymbol{u}'\|_{L^\infty (2\eta,\infty ;\boldsymbol{H}_0)}\leq C M_6.
\end{align*} 
This together with \eqref{esmum} yields 
\begin{align}
\|\boldsymbol{\mu }\|_{L^\infty (2\eta,\infty ;\boldsymbol{W})}\leq \widetilde{M}_7.
\label{muesH2b}
\end{align}
Put $\widetilde{\mu }:=\mu -\pi (u)$ and 
$\widetilde{\mu }_\Gamma :=\mu_\Gamma  -\pi_{\Gamma} (u_\Gamma )$. 
From \eqref{est0}, \eqref{muesH2b}, (A3) and the Sobolev embedding theorem, we see that there exists a positive constant $M_7$ such that
\begin{equation}
	\| \widetilde{\mu} \|_{L^\infty ((t,t+1) \times \Omega )}
	\le M_7,
	\quad \| \widetilde{\mu}_\Gamma \|_{L^\infty ((t,t+1) \times \Gamma  )}
	\le M_7,
	\label{muinfty}
\end{equation}
for all $t \ge 2\eta$.

Thanks to \eqref{muinfty}, we are able to obtain further estimates for
$\beta (u)$ and $\beta (u_\Gamma )$.
This is an essence of the proof for the separation property. 
To this end, for each $p\in[2,\infty) $, testing the equation 
\begin{equation*}
	-\Delta u + \beta (u) =\widetilde{\mu }\in L^\infty \bigl( (t,t+1) \times \Omega  \bigr)
\end{equation*}
by $|\beta (u)|^{p-2}\beta (u)\in L^1((t,t+1) \times \Omega )$
(recall \eqref{est341}), and testing the equation
\begin{equation*}
	\partial _{\boldsymbol{\nu }}u-\Delta_\Gamma  u_\Gamma  + \beta_\Gamma (u_\Gamma )
	=\mu_\Gamma  -\pi (u_\Gamma ) =:\widetilde{\mu }_\Gamma  \in L^\infty \bigl( (t,t+1) \times \Gamma \bigr)
\end{equation*}
by $|\beta (u_\Gamma )|^{p-2}\beta (u_\Gamma )$, adding the resultants together and by a similar argument like in Lemma \ref{reguH2}, we obtain
\begin{align*}
	&
	\int_{\Omega }^{}\bigl| \beta \bigl(u(s) \bigr) \bigr|^{p}dx
	+
	\int_{\Gamma  }^{} \bigl| \beta \bigl(u_\Gamma (s) \bigr) \bigr|^{p-2}\beta \bigl(u_\Gamma (s) \bigr)\beta_\Gamma \bigl(u_\Gamma (s) \bigr) d\Gamma
	\nonumber \\
	& \quad
	\le  \int_{\Omega }^{}\widetilde{\mu }(s)
	 \bigl| \beta \bigl(u(s) \bigr) \bigr|^{p-2} \beta \bigl(u(s) \bigr) dx
	 +\int_{\Omega }^{}\widetilde{\mu }_\Gamma (s)
	 \bigl| \beta \bigl(u_\Gamma (s) \bigr) \bigr|^{p-2} \beta \bigl(u_\Gamma (s) \bigr) d\Gamma\\
	  & \quad \le
	  \bigl \| \widetilde{\mu }(s)  \bigr \|_{L^p(\Omega )}
	 \left(
	 \int_{\Omega }^{}\bigl| \beta \bigl(u(s) \bigr) \bigr|^{p}dx
	\right)^{\frac{p-1}{p}}
	+  \bigl \| \tilde{\mu }_\Gamma(s) \bigr \|_{L^p(\Gamma )}
	 \left(
		\int_{\Gamma  }^{}\bigl| \beta \bigl(u_\Gamma (s) \bigr) \bigr|^{p}d\Gamma
	\right) ^{\frac{p-1}{p}}
\end{align*}
and
\begin{align*}
	\int_\Gamma  \bigl |\beta  \bigl( u_{\Gamma} (s) \bigr) \bigr |^p
	d\Gamma 
	& \leq 2 c_1 \int_\Gamma  \bigl | 
	\beta  \bigl( u_{\Gamma} (s) \bigr) \bigr |^{ p-2}
	\beta \bigr( u_{\Gamma}  (s) \bigr) \beta_\Gamma \bigr( 
	u_\Gamma  (s) \bigr) d\Gamma  +C, \quad \text{for a.a.\ }s \ge 2\eta,
\end{align*}
It follows from the Young inequality that 	 
\begin{align*}
	&\int_{\Omega }^{}\bigl| \beta \bigl(u(s) \bigr) \bigr|^{p}dx
	+ \frac{1}{2c_1}
	\int_{\Gamma  }^{}\bigl| \beta \bigl(u_\Gamma (s) \bigr) \bigr|^{p}d\Gamma\nonumber\\
	 &\quad  \le \frac12\int_{\Omega }^{}\bigl| \beta \bigl(u(s) \bigr) \bigr|^{p}dx
	+ \frac{1}{4c_1}
	\int_{\Gamma  }^{}\bigl| \beta \bigl(u_\Gamma (s) \bigr) \bigr|^{p}d\Gamma
	+ \frac{1}{p}\left(\frac{2(p-1)}{p }\right) ^{p-1}  
	 \bigl \| \widetilde{\mu }(s) \bigr \|_{L^p(\Omega )}^p\nonumber\\
	&\qquad +\frac{1}{2c_1 p}\left(\frac{2(p-1)}{p }\right) ^{p-1}
	  \bigl \| \widetilde{\mu }_\Gamma(s)  \bigr \|_{L^p(\Gamma )}^p +C,
	 \quad \text{for a.a.\ }s \ge 2\eta,
\end{align*}
where $C$ is independent of $p$.  We note that 
$$
\|\widetilde{\mu }\|_{L^p((t,t+1) \times \Omega )}
\to \|\widetilde{\mu }\|_{L^\infty ((t,t+1) \times \Omega )},\quad \|\widetilde{\mu }_\Gamma\|_{L^p((t,t+1) \times \Gamma )}
\to \|\widetilde{\mu }_\Gamma\|_{L^\infty ((t,t+1) \times \Gamma )}
$$
as $p \to \infty$ (see, e.g., \cite[Theorem~2.14]{AF03}). 
Therefore, for sufficiently large $p$, it holds
\begin{align*}
	&\int_{t}^{t+1}
	\! \!
	\int_{\Omega }^{}
	\bigl| \beta \bigl( u(s) \bigr)\bigr|^p dx ds 
	+ \frac{1}{2c_1} \int_{t}^{t+1}
	\! \!\int_{\Gamma  }^{}\bigl| \beta \bigl(u_\Gamma (s) \bigr) \bigr|^{p}d\Gamma ds\\
		& \quad \leq \left(1+\frac{1}{2c_1 p}\right)\left(\frac{2(p-1)}{p }\right) ^{p-1} (M_7)^{p} +C,
		\quad \text{for all }t \ge 2\eta.
\end{align*}
  Letting $p \to \infty $, we deduce that
\begin{equation*}
	 \bigl \| \beta (u)  \bigr \|_{L^\infty ((t,t+1)\times \Omega )}
	\le 2M_7+1, 
	\quad  \bigl \| \beta (u_\Gamma )  \bigr \|_{L^\infty  ((t,t+1)\times \Gamma )}
	\le 2M_7+1.
\end{equation*}
for all $t \ge 2\eta$.
From assumption (A1) on $\beta$, we see that there exists a constant $\delta_2 \in(0,1)$ such that
\begin{equation*}
	 \bigl \| u(t) \bigr \|_{L^\infty ((t,t+1)\times \Omega )} \le 1-\delta_2, \quad
	\bigl \| u_\Gamma (t) \bigr \|_{L^\infty ((t,t+1)\times \Omega )} \le 1-\delta_2,
\end{equation*}
for all $t \ge 2\eta$. Besides, it holds that 
$\boldsymbol{u}\in L^\infty(\eta,\infty; C(\overline{\Omega })\times C(\Gamma))$, due to the {S}obolev embedding theorem and \eqref{est34}. As a consequence, we can conclude the separation property \eqref{isepe} (replacing $2\eta$ by $\eta$ since $\eta>0$ is arbitrary).

The proof of Theorem \ref{septhm} is complete. \hfill $\Box$

\section{Long-time behavior}
\setcounter{equation}{0}

In this section we prove Theorem \ref{convthm} on the long time behavior of problem \eqref{GMS}.

\subsection{Compactness of the orbit}

The following lemma implies the compactness of the weak solution $\boldsymbol{u}$ in $\boldsymbol{W}$ for large time. 
\begin{lemma}\label{comp}
Under the assumptions of Proposition \ref{gloweak}, there exists a positive constant $M_8$ such that
\begin{equation}
	 \|u\|_{L^\infty(T_1,\infty; H^3(\Omega))}
	 +  \|u_\Gamma\|_{L^\infty(T_1,\infty; H^3(\Gamma))}\le M_8,\quad \text{for all } t \ge T_1,
	\label{est42}
\end{equation}
 where $T_1$ is the same as in Theorem \ref{septhm} (1). 
\end{lemma}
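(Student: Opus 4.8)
The plan is to run an elliptic bootstrap on the stationary equations \eqref{p4}--\eqref{p5} satisfied, at each fixed time $t$, by the pair $(u(t),u_\Gamma(t))$, starting from the $H^2\times H^2$-bound of Lemma~\ref{reguH2} and the separation property of Theorem~\ref{septhm}(1).

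First I would exploit that for $t\ge T_1$ both $u(t)$ and $u_\Gamma(t)$ take values in the compact interval $[-1+\delta_1,1-\delta_1]$, so that $\beta,\beta',\beta_\Gamma,\beta_\Gamma'$ evaluated along the solution are bounded by a constant depending only on $\delta_1$ (hence only on $m_0$). Combining this with \eqref{est34}, the identities $\nabla\beta(u)=\beta'(u)\nabla u$, $\nabla_\Gamma\beta_\Gamma(u_\Gamma)=\beta_\Gamma'(u_\Gamma)\nabla_\Gamma u_\Gamma$, and (A3), one obtains
\[
\beta(u),\ \pi(u)\in L^\infty(T_1,\infty;V),\qquad \beta_\Gamma(u_\Gamma),\ \pi_\Gamma(u_\Gamma)\in L^\infty(T_1,\infty;V_\Gamma),
\]
with norms controlled by $M_3$, $M_4$ and the separation constant.

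Next, for a.a.\ $t\ge T_1$ the pair $(u(t),u_\Gamma(t))$ solves, in the strong sense guaranteed by Remark~\ref{regrem}, the linear bulk--surface elliptic problem
\[
-\Delta u=f\ \ \text{in }\Omega,\qquad u_{|_\Gamma}=u_\Gamma,\qquad \partial_{\boldsymbol{\nu}}u-\Delta_\Gamma u_\Gamma=g\ \ \text{on }\Gamma,
\]
with $f:=\mu-\beta(u)-\pi(u)$ and $g:=\mu_\Gamma-\beta_\Gamma(u_\Gamma)-\pi_\Gamma(u_\Gamma)$. When $\sigma>0$, estimate \eqref{est33} gives $\mu\in L^\infty(T_1,\infty;V)$ and $\mu_\Gamma\in L^\infty(T_1,\infty;V_\Gamma)$, hence $f\in L^\infty(T_1,\infty;V)$ and $g\in L^\infty(T_1,\infty;V_\Gamma)$. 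I would then apply the elliptic regularity theory for this coupled system --- the tool behind Lemma~\ref{esLepH2}, applied one differentiability order higher, exactly as in the $H^3\times H^3$-bound for steady states in Lemma~\ref{solsta} --- which maps $H^1\times H^1$ data to $H^3\times H^3$ solutions and yields
\[
\|u(t)\|_{H^3(\Omega)}+\|u_\Gamma(t)\|_{H^3(\Gamma)}\le C\big(\|f(t)\|_V+\|g(t)\|_{V_\Gamma}+\|(u(t),u_\Gamma(t))\|_{\boldsymbol H}\big),
\]
uniformly in $t\ge T_1$; this is \eqref{est42} with a suitable $M_8$.

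The main obstacle is to secure this when $\sigma=0$: then $\mu_\Gamma=\mu_{|_\Gamma}$ and \eqref{est33} only provides $\mu_\Gamma\in H^{1/2}(\Gamma)$, which is enough for $u\in H^3(\Omega)$ and $u_\Gamma\in H^{5/2}(\Gamma)$ but not for $u_\Gamma\in H^3(\Gamma)$. To close this case one must first improve $\mu$ itself, treating $-\Delta\mu=-\partial_t u$ in $\Omega$ with Neumann datum $\partial_{\boldsymbol{\nu}}\mu=-\partial_t u_\Gamma$ on $\Gamma$ (the dynamic boundary condition \eqref{aeon} with $\sigma=0$) and using a time-uniform bound on $\partial_t\boldsymbol u$ to get $\mu\in L^\infty(T_1,\infty;H^2(\Omega))$, whence $\mu_\Gamma\in L^\infty(T_1,\infty;H^{3/2}(\Gamma))\subset L^\infty(T_1,\infty;V_\Gamma)$ and the previous step applies verbatim. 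More broadly, the delicate part throughout is ensuring that every estimate is uniform in $t$ rather than merely local in time; this rests on the $L^\infty$-in-time bounds of Lemmas~\ref{basicene}, \ref{muV}, \ref{reguH2}, and --- in the case $\sigma=0$ --- on a corresponding time-uniform control of the time derivative $\partial_t\boldsymbol u$.
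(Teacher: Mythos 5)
Your proof is correct and follows essentially the same route as the paper: the separation property \eqref{esepe} together with (A1) and \eqref{est34} gives $\beta(u)\in L^\infty(T_1,\infty;V)$ and $\beta_\Gamma(u_\Gamma)\in L^\infty(T_1,\infty;V_\Gamma)$, estimate \eqref{est33} controls $\boldsymbol{\mu}$, and the elliptic regularity of Lemma \ref{esLepH2} applied one order higher (with $s=1$) yields the $H^3\times H^3$ bound. Your remark on the case $\sigma=0$ identifies a genuine subtlety that the paper's own proof passes over silently: it asserts $\|\mu_\Gamma(t)\|_{V_\Gamma}\le C$ directly from \eqref{est33}, whereas for $\sigma=0$ that estimate only yields $\mu_\Gamma=\mu_{|_\Gamma}\in H^{1/2}(\Gamma)$. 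The repair you sketch, however, requires a time-uniform bound of $\partial_t\boldsymbol{u}$ in $\boldsymbol{H}$, which the paper establishes only for $n=2$ and $\sigma>0$ (Lemma \ref{impesut}); so that branch of your argument remains incomplete, though no more so than the published proof itself.
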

\begin{proof}
Consider the equation for $\boldsymbol{u}=(u,u_\Gamma )$ 
\begin{align*}
	-\Delta u(t) &= \mu(t) - \beta \bigl( u(t) \bigr) - \pi \bigl( u(t) \bigr) =:f(t)
	\quad {\rm a.e.\ in~} \Omega ,
	\\
	u_{|_\Gamma }(t)&=u_\Gamma(t)
	\quad {\rm a.e.\ on~} \Gamma,
	\\
	\partial _{\boldsymbol{\nu }} u(t) & - \Delta_\Gamma  u_\Gamma (t) +u_\Gamma(t)=\mu _\Gamma(t)
	- \beta_{\Gamma} \bigl(u_\Gamma(t) \bigr) + \pi_{\Gamma} \bigl( u_\Gamma(t) \bigr) +u_\Gamma(t)
	=: f_\Gamma (t)
	\quad {\rm a.e.\ on~} \Gamma,
\end{align*}
for all $t \ge T_1$. From the separation property \eqref{esepe}, (A1) and \eqref{est34}, we can obtain
\begin{align*}
	 \bigl \|\beta  \bigl( u(t)  \bigr)  \bigr \|_{V}
	+  \bigl \|\beta_\Gamma  \bigl( u_\Gamma(t)  \bigr) \bigr \|_{V_\Gamma}\leq C, \quad \text{for a.a.\ }t \ge T_1.
\end{align*}
Next, it follows from \eqref{est33}  that
\begin{equation*}
	 \bigl \| \mu (t) \bigr \|_{V} +  \bigl \| \mu _\Gamma (t)  \bigr \|_{V_\Gamma }
	\le C, \quad \text{for a.a.\ }t \ge T_1.
\end{equation*}
As a consequence, we have 
\begin{equation*}
	 \bigl \| f(t) \bigr \|_{V} + \bigl \| f_\Gamma (t)  \bigr \|_{V_\Gamma }
	\le C, \quad \text{for a.a.\ }t \ge T_1,
\end{equation*}
which together with the elliptic regularity theory (see Lemma \ref{esLepH2}) 
yields the uniform estimate \eqref{est42}. 
\end{proof}

\begin{remark}
By \eqref{est32}, \eqref{est42} and interpolation, we easily see that $\boldsymbol{u}\in C([t,t+1]; \boldsymbol{W})$ for all $t\geq T_1$, hence, 
\begin{equation*}
	\boldsymbol{u}\in C \bigl( [T_1,\infty); \boldsymbol{W} \bigr).
\end{equation*} 
The uniform estimate \eqref{est42} and the compact embedding $H^3(\Omega)\times H^3(\Gamma)\hookrightarrow\hookrightarrow W\times W_\Gamma$ also imply that the $\omega$-limit set $\omega (\boldsymbol{u}_0)$ is compact in
$\boldsymbol{W}$ (an alternative proof for this fact is due to \eqref{omeS} and Lemma \ref{solsta}). 
\end{remark}

\subsection{Convergence to equilibrium}

Since $\omega (\boldsymbol{u}_0)$ is nonempty and compact in $\boldsymbol{W}$, we immediately have the sequent  convergence 
\begin{equation*}
	\lim_{t\to \infty} \mathrm{dist}  \bigl ( 
	S(t) \boldsymbol{u}_0, \omega(\boldsymbol{u}_0) 
	 \bigr)=0\quad \text{in } \boldsymbol{W}.
\end{equation*}
Our aim is to prove that for any initial datum$\boldsymbol{u}_0$ satisfying (A4), the corresponding $\omega $-limit set consists only one point, namely, there exists $\boldsymbol{u}_\infty\in \mathcal{S}_{m_0}$ such that
\begin{equation*}
	\boldsymbol{u}(t) \to \boldsymbol{u}_\infty, \quad {\rm in}~\boldsymbol{W} \quad  \text{as }t \to \infty.
\end{equation*}
This can be achieved by using the well-known {\L}ojasiewicz--Simon approach, see for instance, \cite{HJ99, Jen98}, and further applications \cite{ASS18,AW07,CFP06,GW08,GMS09,GMS11,GGM17,PW06,RH99,SW10,Wu07,Wu07b,WZ04}. 

The main tool is following extended {\L}ojasiewicz--{S}imon inequality.
Let $\boldsymbol{\psi }=(\psi ,\psi _\Gamma )\in \mathcal{S}_{a}$, $ a \in (-1,1)$. 
It is straightforward to verify that $\boldsymbol{\psi }$ is a critical point of the free energy $E$ (see \eqref{ene}). Moreover, we obtain the following lemma:
\begin{lemma}\label{LSi}
Suppose that (A1)--(A3) are satisfied. In addition, we assume that
 $\beta$, $\beta_\Gamma$ are real analytic on $(-1,1)$ and 
 $\pi $, $\pi_\Gamma$ are real analytic on 
 $\mathbb{R}$. 
 Let $\boldsymbol{\psi }=(\psi ,\psi _\Gamma )\in \mathcal{S}_{a}$, $ a\in (-1,1)$. There
exist constants $\theta ^* \in (0, 1/2)$ and $b^*>0$ such that
\begin{equation}
	\left\|
	\boldsymbol{P}
	\left(
	\begin{array}{c}
	 -\Delta w+\beta (w) + \pi (w) \\
	 \partial _{\boldsymbol{\nu }} w
	 -\Delta _\Gamma w_\Gamma +\beta_\Gamma (w_\Gamma )+\pi_\Gamma (w_\Gamma )
	\end{array}
	\right)
	\right\| _{\boldsymbol{H}_0}
	\ge
	\bigl| E(\boldsymbol{w})-E(\boldsymbol{\psi })\bigr|^{1-\theta ^*}
	\label{ls}
\end{equation}
for all
$\boldsymbol{w} \in \boldsymbol{W}$ satisfying
$\|\boldsymbol{w}-\boldsymbol{\psi }\|_{\boldsymbol{W}}<b^*$ and $m(\boldsymbol{w})= a$.
\end{lemma}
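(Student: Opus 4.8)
The plan is to prove the extended Łojasiewicz--Simon inequality \eqref{ls} by viewing $E$ as a functional on the affine subspace $\{\boldsymbol{w}\in\boldsymbol{V}:m(\boldsymbol{w})=a\}$ and applying the abstract Łojasiewicz--Simon theorem for analytic functionals on Banach spaces (in the spirit of \cite{Jen98,HJ99}; see also the arguments in \cite{GMS09,GMS11,Wu07}). The crucial point here, compared with the case of regular potentials, is that the singular nonlinearities $\beta,\beta_\Gamma$ are only defined and analytic on $(-1,1)$, so the functional $E$ is \emph{not} analytic on a full neighborhood in $\boldsymbol{W}$. This is resolved exactly by the separation property: since $\boldsymbol{\psi}\in\mathcal{S}_a$, Lemma \ref{solsta} gives a uniform $\delta_a\in(0,1)$ with $\|\psi\|_{L^\infty(\Omega)},\|\psi_\Gamma\|_{L^\infty(\Gamma)}\le 1-\delta_a$, and $\boldsymbol{\psi}\in H^3(\Omega)\times H^3(\Gamma)$. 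Choosing $b^*$ small enough (using $\boldsymbol{W}\hookrightarrow C(\overline\Omega)\times C(\Gamma)$ in dimension $n\le 3$) we may ensure that every $\boldsymbol{w}$ with $\|\boldsymbol{w}-\boldsymbol{\psi}\|_{\boldsymbol{W}}<b^*$ satisfies $\|w\|_{L^\infty},\|w_\Gamma\|_{L^\infty}\le 1-\delta_a/2$. On this ball the composition operators $w\mapsto\beta(w)$, etc., are well-defined and analytic, so the obstruction disappears.

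**The main steps.** First I would introduce the linear constraint space $\boldsymbol{H}_{0}$ (mean-zero pairs) and work with $\widetilde E(\boldsymbol{v}):=E(\boldsymbol{\psi}+\boldsymbol{v})$ for $\boldsymbol{v}$ in a small ball of $\boldsymbol{W}\cap\{m(\cdot)=0\}$; note that $\boldsymbol{P}$ projects the Euler--Lagrange operator onto $\boldsymbol{H}_0$, which is precisely the gradient of $\widetilde E$ with respect to the $\boldsymbol{H}_0$-inner product, since the constant $\mu_s$ in \eqref{staweak2} is exactly the Lagrange multiplier killed by $\boldsymbol{P}$. Second, I would verify the hypotheses of the abstract theorem: (i) analyticity of $\widetilde E:\mathcal{U}\subset\boldsymbol{W}\cap\boldsymbol{H}_0\to\mathbb{R}$, which follows from analyticity of the quadratic gradient terms and of the Nemytskii operators built from the analytic functions $\widehat\beta,\widehat\beta_\Gamma$ (analytic on $(-1,1)$) and $\widehat\pi,\widehat\pi_\Gamma$ (analytic on $\mathbb{R}$), all composed with functions bounded away from $\pm1$; (ii) the first derivative $\mathcal{M}(\boldsymbol{v}):=\boldsymbol{P}\big({-}\Delta(\psi+v)+\beta(\psi+v)+\pi(\psi+v),\ \partial_{\boldsymbol\nu}(\psi+v)-\Delta_\Gamma(\psi_\Gamma+v_\Gamma)+\beta_\Gamma(\psi_\Gamma+v_\Gamma)+\pi_\Gamma(\psi_\Gamma+v_\Gamma)\big)$ maps $\boldsymbol{W}\cap\boldsymbol{H}_0\to\boldsymbol{H}_0$ with $\mathcal{M}(\boldsymbol{0})=\boldsymbol{0}$; (iii) the linearization $L:=\mathcal{M}'(\boldsymbol{0})$ is a Fredholm operator of index zero from $\boldsymbol{W}\cap\boldsymbol{H}_0$ to $\boldsymbol{H}_0$. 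For (iii), $L$ has the form $\boldsymbol{A}+(\text{bounded perturbation})$, where $\boldsymbol{A}$ is the (isomorphism onto a codimension-zero image, modulo constants) operator $\partial\varphi$ studied in the preliminaries; the zeroth-order multiplication operators $\beta'(\psi)\,\cdot$ and $\beta_\Gamma'(\psi_\Gamma)\,\cdot$ are bounded from $\boldsymbol{W}$ into $\boldsymbol{H}_0$ because $\beta'(\psi),\beta_\Gamma'(\psi_\Gamma)\in L^\infty$ (again by the separation), hence compact as perturbations of the isomorphism $\boldsymbol{A}+\mathrm{Id}$, giving the Fredholm property. Then the classical Łojasiewicz--Simon theorem (e.g.\ \cite[Corollary 3.11]{Jen98}) yields $\theta^*\in(0,1/2)$ and $b^*>0$ with $\|\mathcal{M}(\boldsymbol{v})\|_{\boldsymbol{H}_0}\ge|\widetilde E(\boldsymbol{v})-\widetilde E(\boldsymbol{0})|^{1-\theta^*}$, which, rewritten in terms of $\boldsymbol{w}=\boldsymbol{\psi}+\boldsymbol{v}$, is exactly \eqref{ls}.

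**The main obstacle.** The technically delicate step is (iii), establishing that the linearized operator $L$ is Fredholm of index zero between the right function spaces, while respecting the mean-zero constraint and the trace-coupling built into $\boldsymbol{V}$ and $\boldsymbol{W}$. One must check carefully that the bulk-boundary coupled elliptic operator $\partial\varphi$ plus the order-zero terms is, up to a compact perturbation, an isomorphism of $\boldsymbol{W}\cap\boldsymbol{H}_0$ onto $\boldsymbol{H}_0$; this uses the elliptic regularity result (Lemma \ref{esLepH2}) for the transmission problem and the fact that, after the $\boldsymbol{P}$-projection, the kernel and cokernel are finite-dimensional and equal-dimensional by self-adjointness of $L$ in $\boldsymbol{H}_0$. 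A secondary subtlety is keeping all smallness parameters ($b^*$ relative to $\delta_a$, and the neighborhood on which analyticity holds) uniform — but since $\delta_a$ depends only on $a$ (Lemma \ref{solsta}) and $\mathcal{S}_a$ is bounded in $H^3(\Omega)\times H^3(\Gamma)$, this poses no real difficulty. Once \eqref{ls} is in hand, the convergence in Theorem \ref{convthm} follows by the standard argument: combine \eqref{ls} with the energy equality \eqref{equality}, the dissipation estimate, and the compactness of the orbit in $\boldsymbol{W}$ (Lemma \ref{comp}) to show $\|\boldsymbol{u}'\|$ is integrable in time, with the stated algebraic decay rate obtained from the differential inequality for $E(\boldsymbol{u}(t))-E(\boldsymbol{u}_\infty)$ in the usual way.
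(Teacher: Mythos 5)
Your proposal is correct and follows essentially the same route as the paper: the paper's own proof of Lemma \ref{LSi} is precisely a sketch of this argument — use Lemma \ref{solsta} and the embedding $\boldsymbol{W}\hookrightarrow C(\overline\Omega)\times C(\Gamma)$ to choose $b^*$ so small that $\boldsymbol{w}$ stays uniformly separated from $\pm 1$ (so that $\beta$, $\beta_\Gamma$ act as analytic, nonsingular nonlinearities on the relevant ball), and then invoke the standard Łojasiewicz--Simon machinery of \cite{Jen98,RH99} adapted to the mass-constrained, bulk--surface setting as in \cite{GW08,Wu07,SW10,GMS11}. Your additional detail on the role of $\boldsymbol{P}$ as removing the Lagrange multiplier and on the Fredholm property of the linearization is exactly what those cited references carry out.
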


Lemma \ref{solsta} implies that all elements of $\mathcal{S}_{a}$ are uniformly separated from $\pm 1$. Then we can take $b^*>0$ sufficiently small such that any 
element $\boldsymbol{w} \in \boldsymbol{W}$ satisfying
$\|\boldsymbol{w}-\boldsymbol{\psi }\|_{\boldsymbol{W}}<b^*$ is uniformly separated from $\pm 1$. In particular, this choice prevents the possible singularity in the nonlinearities $\beta$, $\beta_\Gamma$. Keeping this fact in mind, we can follow the standard argument like in \cite{Jen98,RH99} to prove Lemma \ref{LSi}. More related to our problem \eqref{GMS}, we refer to \cite{GW08} for the case with mass conservation and a linear boundary condition, and to \cite{Wu07,SW10} for the case with nonlinear boundary condition but without mass conservation. When singular potential is considered, we refer to \cite{GMS11}.\medskip

\textbf{Proof of Theorem \ref{convthm}}. 
We now have all the necessary ingredients for the proof: 
\begin{itemize}
\item the characterization  of $\omega(\boldsymbol{u}_0)$;
\item the energy identity \eqref{equality};
\item the {\L}ojasiewicz--{S}imon type inequality \eqref{ls}.
\end{itemize}

The proof of  Theorem \ref{convthm} can be carried out in the same way as for instance, \cite[Section~2.4]{GW08}. 
We just would like to mention that in Lemma \ref{LSi}, if $\boldsymbol{w}$ is taken to be the weak solution $\boldsymbol{u}(t)$ of problem \eqref{GMS}  that can be shown falling into the small $\boldsymbol{W}$-neighborhood of a cluster point $\boldsymbol{u}_\infty\in \omega(\boldsymbol{u}_0)$ (which is indeed true for sufficiently large time), then by the generalized Poincar\'{e} inequality (Lemma  \ref{Poinc}), we have 
\begin{equation*}
	\|\boldsymbol{u}'\|_{\mathcal{V}_{\sigma,0}^*}=\|\boldsymbol{P} \boldsymbol{\mu}\| _{\mathcal{V}_{\sigma,0}}
	\ge
	C\bigl| E(\boldsymbol{u})-E(\boldsymbol{u}_\infty) \bigr|^{1-\theta ^*}.
\end{equation*}
This connects the energy dissipation in \eqref{equality} and the {\L}ojasiewicz--{S}imon inequality \eqref{ls} that leads to the proof. The rest of details are omitted. 
\hfill $\Box$

\appendix
\section{Auxiliary Lemmas}
\setcounter{equation}{0}

We report some lemmas that have been used in this paper.

\begin{lemma} \label{inter}\mbox{{\rm \cite[Lemma~5.1]{Lio68}}}
Let $B_0, B, B_1$ be three {B}anach spaces so that $B_0$ and $B_1$ are reflexive. Moreover,
$B_0
\mathop{\hookrightarrow} \mathop{\hookrightarrow}
B
\subset
B_1$. Then, for each $\delta >0$, there exists a positive constant
$C_\delta $ depends on $\delta $ such that
\begin{equation*}
	\|z\|_B \le \delta \|z\|_{B_0}+C_\delta \|z\|_{B_1}, \quad {\it for~all~}z \in B_0.
\end{equation*}
\end{lemma}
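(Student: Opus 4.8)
The plan is to argue by contradiction. Suppose the claimed inequality fails for some fixed $\delta_0>0$; then for every $n\in\mathbb{N}$ there exists $z_n\in B_0$ with
\[
	\|z_n\|_B > \delta_0\|z_n\|_{B_0} + n\|z_n\|_{B_1}.
\]
In particular $z_n\neq 0$, so after rescaling (replacing $z_n$ by $z_n/\|z_n\|_B$) we may assume $\|z_n\|_B=1$ for all $n$. The displayed inequality then forces $\|z_n\|_{B_0}<\delta_0^{-1}$ and $\|z_n\|_{B_1}<n^{-1}$; that is, $\{z_n\}$ is bounded in $B_0$ and converges to $0$ in $B_1$.

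Next I would invoke the compact embedding $B_0\hookrightarrow\hookrightarrow B$: since $\{z_n\}$ is bounded in $B_0$, it is precompact in $B$, so along a subsequence (not relabelled) $z_n\to z$ strongly in $B$ for some $z\in B$. Passing to the limit in $\|z_n\|_B=1$ gives $\|z\|_B=1$. If one prefers, the reflexivity of $B_0$ also yields a weakly convergent subsequence in $B_0$ whose limit must coincide with $z$, but this is not needed for the argument. Now the continuous inclusion $B\subset B_1$ transfers the strong convergence: $z_n\to z$ in $B_1$ as well. Comparing with $z_n\to 0$ in $B_1$ and using the injectivity of the embedding $B\hookrightarrow B_1$, we conclude $z=0$ in $B$, which contradicts $\|z\|_B=1$. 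Hence the inequality holds for every $\delta>0$ with some constant $C_\delta>0$.

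The only point requiring any care — and hence the ``main obstacle'', such as it is — is the bookkeeping of the three embeddings: one uses that $B\hookrightarrow B_1$ is continuous, so that strong convergence in $B$ passes to $B_1$, and that it is injective, so that a limit vanishing in $B_1$ also vanishes in $B$; both properties are implicit in the hypothesis $B\subset B_1$. Reflexivity of $B_0$ and $B_1$ plays no essential role in this particular statement and is retained only because the lemma is quoted verbatim from \cite{Lio68}, where it appears as part of a more general interpolation result. Since no quantitative dependence of $C_\delta$ on $\delta$ is asserted, no explicit estimate for the constant is needed, and the contradiction argument above suffices.
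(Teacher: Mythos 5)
Your proof is correct and is the standard compactness-by-contradiction argument for the Ehrling-type lemma; the paper gives no proof of its own but simply cites \cite[Lemma~5.1]{Lio68}, where essentially this same argument appears. Your side remarks --- that the normalization forces $\|z_n\|_{B_0}<\delta_0^{-1}$ and $\|z_n\|_{B_1}<n^{-1}$, and that the reflexivity hypotheses are inherited from the surrounding compactness theorem in Lions rather than being needed here --- are both accurate.
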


\begin{lemma} \label{gronwall} \mbox{{\rm\cite[Lemma~1.1]{Tem97} }}
Let $g,h,y$ be three positive locally integrable functions on $(t_0, \infty )$ such that $y'$ is locally integrable on $(t_0, \infty )$ and which satisfy
\begin{gather*}
	\frac{dy}{dt} \le gy +h,
	\\
	\int_{t}^{t+r} g(s)ds \le a_1, \quad
	\int_{t}^{t+r} h(s) ds \le a_2, \quad
	\int_{t}^{t+r} y(s) ds \le a_3 \quad {\it for~all~} t \ge t_0,
\end{gather*}
where $r, a_1, a_2, a_3$ are positive constants. Then
\begin{equation*}
	y(t+r) \le \left( \frac{a_3}{r} + a_2 \right) e^{a_1} \quad {\it for~all~} t \ge t_0.
\end{equation*}
\end{lemma}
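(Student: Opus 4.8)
The statement is the standard uniform Gronwall inequality (as cited from Temam), so the plan is simply to recall its short proof: apply the classical differential Gronwall inequality on the sliding window $[t,t+r]$, and then average the resulting pointwise bound over that window so as to bring in the integral control $\int_t^{t+r} y(s)\,ds \le a_3$.

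First I would fix an arbitrary $t\ge t_0$ and, for each $s\in[t,t+r]$, multiply the differential inequality $y'\le gy+h$ by the integrating factor $\exp\big(-\int_s^{\tau} g(\sigma)\,d\sigma\big)$ and integrate in $\tau$ from $s$ to $t+r$. Since $g,h,y$ are locally integrable and $y'$ is locally integrable, $y$ is locally absolutely continuous, so this computation is legitimate and gives
\[
	y(t+r)\le y(s)\exp\Big(\int_s^{t+r} g(\sigma)\,d\sigma\Big)
	+\int_s^{t+r} h(\tau)\exp\Big(\int_\tau^{t+r} g(\sigma)\,d\sigma\Big)\,d\tau .
\]
Using $\int_s^{t+r} g\le\int_t^{t+r} g\le a_1$, the analogous bound $\int_\tau^{t+r} g\le a_1$, and $\int_t^{t+r} h\le a_2$, this collapses to $y(t+r)\le e^{a_1}\big(y(s)+a_2\big)$ for every $s\in[t,t+r]$.

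Next I would integrate this inequality in $s$ over $[t,t+r]$. Because the left-hand side does not depend on $s$, we obtain $r\,y(t+r)\le e^{a_1}\big(\int_t^{t+r} y(s)\,ds+r a_2\big)\le e^{a_1}(a_3+r a_2)$, and dividing by $r$ yields $y(t+r)\le\big(a_3/r+a_2\big)e^{a_1}$. Since $t\ge t_0$ was arbitrary, the claim follows. There is no genuine obstacle here; the only points deserving a line of care are the rigorous justification of the integrating-factor step under the stated minimal regularity (possibly adjusting $y$ on a null set so that the pointwise bound holds for every, not merely a.e., $s$) and the elementary fact that the window $[t,t+r]$ remains inside the domain for all $t\ge t_0$.
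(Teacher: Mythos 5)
Your proof is correct and is precisely the standard argument from the cited reference \cite[Lemma~1.1, Chapter~III]{Tem97}: Gronwall with an integrating factor on $[s,t+r]$, followed by averaging in $s$ over the window to exploit the integral bound on $y$. The paper itself offers no proof (it only cites Temam), so there is nothing further to compare; your regularity remark about $y$ being locally absolutely continuous is the right point to flag.
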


\begin{lemma} \label{Poinc} \mbox{{\rm\cite[Section 2]{Gal08} and \cite[Lemma A]{CF15} }}
For every $\sigma\geq 0$, the following generalized Poincar\'{e} inequality holds 
\begin{equation}
	\| \boldsymbol{z} \|_{\boldsymbol{H}}^2
	\leq C_{\rm P} \|\boldsymbol{z}\|_{\mathcal{V}_{\sigma,0}}^2,\quad\text{for all } \boldsymbol{z}\in \mathcal{V}_{\sigma,0},\label{poin}
\end{equation}
for some constant $C_{\rm P}$ independent of $\boldsymbol{z}$.
\end{lemma}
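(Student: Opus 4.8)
The plan is to reduce this classical fact (it is recorded in \cite[Section~2]{Gal08} and \cite[Lemma~A]{CF15}) to a standard Poincar\'e--Wirtinger estimate in the bulk together with the trace theorem. First I would note that it suffices to prove \eqref{poin} in the case $\sigma=0$, with a constant $C_{\rm P}$ depending only on $\Omega$. Indeed, for $\sigma>0$ any $\boldsymbol{z}=(z,z_{|_\Gamma})\in\mathcal{V}_{\sigma,0}=\boldsymbol{V}_0$ satisfies $z\in V$ and $m(\boldsymbol{z})=0$, hence $\boldsymbol{z}$ also belongs to $\mathcal{V}_{0,0}=V\cap\boldsymbol{H}_0$, while $\|\boldsymbol{z}\|_{\mathcal{V}_{0,0}}^2=\int_\Omega|\nabla z|^2\,dx\le\int_\Omega|\nabla z|^2\,dx+\sigma\int_\Gamma|\nabla_\Gamma z_{|_\Gamma}|^2\,d\Gamma=\|\boldsymbol{z}\|_{\mathcal{V}_{\sigma,0}}^2$. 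Thus the inequality for $\sigma=0$ immediately yields it for every $\sigma>0$ with the same constant.

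For $\sigma=0$ I would argue directly. Given $\boldsymbol{z}=(z,z_{|_\Gamma})$ with $z\in V$ and $\int_\Omega z\,dx+\int_\Gamma z_{|_\Gamma}\,d\Gamma=0$, set $\bar z:=|\Omega|^{-1}\int_\Omega z\,dx$. Since $\Omega$ is a bounded connected domain with smooth boundary, the Poincar\'e--Wirtinger inequality gives $\|z-\bar z\|_H\le C\,\|\nabla z\|_H$, and the trace theorem applied to $z-\bar z$ gives $\|z_{|_\Gamma}-\bar z\|_{H_\Gamma}\le C\,\|z-\bar z\|_V\le C\,\|\nabla z\|_H$. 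Rewriting the zero-mean constraint as $-(|\Omega|+|\Gamma|)\bar z=\int_\Gamma(z_{|_\Gamma}-\bar z)\,d\Gamma$ yields $|\bar z|\le C\,\|z_{|_\Gamma}-\bar z\|_{H_\Gamma}\le C\,\|\nabla z\|_H$. Combining these estimates, $\|z\|_H\le\|z-\bar z\|_H+|\Omega|^{1/2}|\bar z|\le C\|\nabla z\|_H$ and $\|z_{|_\Gamma}\|_{H_\Gamma}\le\|z_{|_\Gamma}-\bar z\|_{H_\Gamma}+|\Gamma|^{1/2}|\bar z|\le C\|\nabla z\|_H$, so that $\|\boldsymbol{z}\|_{\boldsymbol{H}}^2=\|z\|_H^2+\|z_{|_\Gamma}\|_{H_\Gamma}^2\le C_{\rm P}\,\|\boldsymbol{z}\|_{\mathcal{V}_{0,0}}^2$, which is \eqref{poin} for $\sigma=0$.

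Alternatively, one may argue by compactness and contradiction: if \eqref{poin} with $\sigma=0$ failed, there would be $\boldsymbol{z}_n\in\mathcal{V}_{0,0}$ with $\|\boldsymbol{z}_n\|_{\boldsymbol{H}}=1$ and $\|\nabla z_n\|_H\to0$; then $\{z_n\}$ is bounded in $V$, so along a subsequence $z_n\to z$ strongly in $H$ and $z_{n|_\Gamma}\to z_{|_\Gamma}$ in $H_\Gamma$, using the compactness of the embeddings $V\hookrightarrow\hookrightarrow H$ and $H^{1/2}(\Gamma)\hookrightarrow\hookrightarrow H_\Gamma$; passing to the limit, $\nabla z=0$ forces $z$ to be a constant, and $m(\boldsymbol{z})=0$ then forces that constant to vanish, contradicting $\|\boldsymbol{z}\|_{\boldsymbol{H}}=1$.

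There is no serious obstacle in either route; the only two points deserving care are the use of the connectedness of $\Omega$ to pass from $\nabla z=0$ to $z$ being constant, and the bookkeeping of the mixed bulk--boundary mean value when estimating $|\bar z|$ in terms of $\|\nabla z\|_H$. Both are entirely routine.
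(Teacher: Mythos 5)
Your proof is correct. The paper does not actually prove this lemma; it is quoted from \cite[Section 2]{Gal08} and \cite[Lemma A]{CF15}, and your argument --- reducing to $\sigma=0$ by dropping the nonnegative surface term, then combining the Poincar\'e--Wirtinger inequality, the trace theorem, and the mixed bulk--boundary mean-zero constraint to control $|\bar z|$ --- is precisely the standard proof given in those references, with the compactness-and-contradiction variant as an equally valid alternative. The only point worth flagging is the one you already flag: connectedness of $\Omega$ (implicit in the word ``domain'') is needed both for Poincar\'e--Wirtinger and, in the contradiction argument, to conclude that $\nabla z=0$ forces $z$ to be constant.
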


\begin{lemma} \label{young} \mbox{{\rm\cite[Section~8.3]{AF03} }}
Let $f(t):=e^t-t-1$, $\widetilde{f}(s):=(1+s)\ln (1+s)-s$, then
\begin{equation*}
	st \le f(t)+\widetilde{f}(s)
	\quad {\it for~all~} s,t \ge 0.
\end{equation*}
\end{lemma}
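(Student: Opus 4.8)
The plan is to observe that $f$ and $\widetilde f$ form a pair of complementary Young functions and to reduce the two-variable inequality to an elementary one-variable minimization. Fix $s\ge 0$ and set $g_s(t):=f(t)+\widetilde f(s)-st$ for $t\ge 0$; the assertion is equivalent to showing $g_s(t)\ge 0$ on $[0,\infty)$ for every such $s$.

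First I would compute $g_s'(t)=e^t-1-s$ and $g_s''(t)=e^t>0$, so that $g_s$ is strictly convex on $[0,\infty)$. Because $s\ge 0$, the unique critical point $t_0:=\ln(1+s)$ belongs to $[0,\infty)$ and hence furnishes the global minimum of $g_s$ over $[0,\infty)$. Next I would evaluate $g_s$ at $t_0$, using $e^{t_0}=1+s$ and $st_0=s\ln(1+s)$:
\begin{align*}
g_s(t_0)&=e^{t_0}-t_0-1+(1+s)\ln(1+s)-s-st_0\\
&=\bigl[(1+s)-1-s\bigr]+\bigl[-\ln(1+s)+(1+s)\ln(1+s)-s\ln(1+s)\bigr]\\
&=0.
\end{align*}
Therefore $g_s(t)\ge g_s(t_0)=0$ for all $t\ge 0$, and since $s\ge 0$ was arbitrary this is exactly $st\le f(t)+\widetilde f(s)$ for all $s,t\ge 0$.

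An equivalent and perhaps more transparent route is to note that $f'(t)=e^t-1$ is a continuous, strictly increasing bijection of $[0,\infty)$ onto itself with inverse $\sigma\mapsto\ln(1+\sigma)$, that $f(t)=\int_0^t(e^\tau-1)\,d\tau$, and that $\int_0^s\ln(1+\sigma)\,d\sigma=(1+s)\ln(1+s)-s=\widetilde f(s)$; the claimed bound is then the classical Young inequality for a pair of functions inverse to one another, obtained by comparing the area $st$ of the rectangle $[0,t]\times[0,s]$ with the areas below the graphs of $f'$ and of its inverse. There is essentially no obstacle in this proof; the only point requiring minimal care is that the stationary point $t_0=\ln(1+s)$ lies in the admissible range $[0,\infty)$, which is guaranteed precisely by the hypothesis $s\ge 0$.
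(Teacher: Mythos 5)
Your proof is correct. The paper itself offers no argument for this lemma --- it is simply quoted from Adams--Fournier, where it appears as an instance of the general Young inequality for a complementary pair of $N$-functions (your second, ``area under the graph of $f'=e^t-1$ and its inverse $\sigma\mapsto\ln(1+\sigma)$'' route is essentially the textbook proof being cited). Your primary argument --- fixing $s\ge 0$, minimizing the strictly convex function $g_s(t)=f(t)+\widetilde f(s)-st$ over $t\ge 0$, locating the critical point $t_0=\ln(1+s)\in[0,\infty)$, and verifying $g_s(t_0)=0$ --- is a complete, self-contained alternative; the evaluation at $t_0$ checks out, since the terms regroup as $\bigl[(1+s)-1-s\bigr]+\ln(1+s)\bigl[(1+s)-1-s\bigr]=0$. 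The only thing the citation-based route buys is generality (it works for any increasing bijection of $[0,\infty)$ onto itself and its inverse), whereas your calculus argument is arguably more transparent for this specific pair and correctly isolates the one point needing care, namely that $t_0\ge 0$ because $s\ge 0$.
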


\begin{lemma} \label{TM} \mbox{{\rm \cite[Theorem~2.2]{NSY97}}}
Let $\Omega \subset \mathbb{R}^2$ be a bounded domain with smooth boundary.
Then, there exists a positive constant $C_{\rm TM}$ such that
\begin{equation*}
	\int_{\Omega }^{} e^{|z|} dx \le C_{\rm TM}
	e^{C_{\rm TM} \|z\|_{V}^2}
	\quad {\it for~all~} z \in V.
\end{equation*}
\end{lemma}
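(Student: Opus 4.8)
The plan is to deduce Lemma~\ref{TM} from the classical Moser--Trudinger inequality by a simple scaling argument. The only substantial input I would use is the following: since $\Omega\subset\mathbb{R}^2$ is bounded with smooth boundary, there are constants $\alpha>0$ and $K>0$, depending only on $\Omega$, such that
\begin{equation*}
	\int_{\Omega} e^{\alpha w^{2}}\,dx\le K\qquad\text{for every }w\in V\text{ with }\|w\|_{V}\le 1 .
\end{equation*}
Granting this, the lemma follows quickly. The case $z=0$ is trivial, so let $z\in V$, $z\neq 0$, and put $w:=z/\|z\|_{V}$, so that $\|w\|_{V}=1$. By Young's inequality, $|z|=\|z\|_{V}\,|w|\le\tfrac{\alpha}{2}w^{2}+\tfrac{1}{2\alpha}\|z\|_{V}^{2}$, hence pointwise $e^{|z|}\le e^{\frac{\alpha}{2}w^{2}}e^{\frac{1}{2\alpha}\|z\|_{V}^{2}}\le e^{\alpha w^{2}}e^{\frac{1}{2\alpha}\|z\|_{V}^{2}}$. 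Integrating over $\Omega$ and using the displayed bound gives $\int_{\Omega}e^{|z|}\,dx\le K\,e^{\frac{1}{2\alpha}\|z\|_{V}^{2}}$, and the claimed inequality holds with $C_{\rm TM}:=\max\{K,\tfrac{1}{2\alpha}\}$, since $\|z\|_{V}^{2}\ge 0$.

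It remains to justify the displayed Moser--Trudinger bound for $V=H^{1}(\Omega)$ (as opposed to $H^{1}_{0}$). I would reduce to the zero-trace case: fix a ball $B\supset\overline{\Omega}$ and a bounded extension operator $\mathcal{E}\colon H^{1}(\Omega)\to H^{1}_{0}(B)$ with $\|\mathcal{E}w\|_{H^{1}_{0}(B)}\le C_{\mathcal{E}}\|w\|_{H^{1}(\Omega)}$, which exists because $\Gamma$ is smooth. The classical inequality of Moser and Trudinger on $H^{1}_{0}(B)$ asserts $\int_{B}e^{\alpha_{0}v^{2}}\,dx\le K_{0}$ for all $v\in H^{1}_{0}(B)$ with $\|\nabla v\|_{L^{2}(B)}\le 1$, for a suitable absolute constant $\alpha_{0}>0$ (sharpness, i.e.\ $\alpha_{0}=4\pi$, is immaterial here). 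Applying this to $v:=\mathcal{E}w/C_{\mathcal{E}}$ when $\|w\|_{V}\le 1$, and restricting the integral from $B$ back to $\Omega$ (where $\mathcal{E}w=w$), produces the displayed bound with $\alpha:=\alpha_{0}C_{\mathcal{E}}^{-2}$ and $K:=K_{0}$. Alternatively one may simply invoke \cite{NSY97} for the $H^{1}$ version directly, as the paper does.

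The main obstacle, modest as it is, lies entirely in this last step: obtaining a Moser--Trudinger-type estimate on $V=H^{1}(\Omega)$ rather than $H^{1}_{0}(\Omega)$, and arranging that the constants $\alpha,K$ depend only on $\Omega$. The scaling argument of the first paragraph is purely algebraic, and since Lemma~\ref{TM} is used only in the qualitative form $\int_{\Omega}e^{|z|}\,dx\le C\,e^{C\|z\|_{V}^{2}}$ — both occurrences of the constant absorbed into a single $C_{\rm TM}$ — no optimization of the exponent is needed.
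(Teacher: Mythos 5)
Your argument is correct. The paper itself offers no proof of this lemma: it is quoted verbatim from \cite[Theorem~2.2]{NSY97}, so there is nothing in the text to compare against except the citation. What you supply is a self-contained and standard derivation: (i) reduce the $H^1(\Omega)$ case to the classical Moser--Trudinger inequality on $H^1_0(B)$ by means of a bounded extension operator followed by restriction back to $\Omega$ (valid since $\Gamma$ is smooth and the cut-off/extension construction lands in $H^1_0(B)$ for a ball $B\supset\overline\Omega$), and (ii) pass from the quadratic-exponent bound $\int_\Omega e^{\alpha w^2}\,dx\le K$ on the unit ball of $V$ to the stated linear-exponent form via the pointwise Young inequality $\|z\|_V|w|\le \tfrac{\alpha}{2}w^2+\tfrac{1}{2\alpha}\|z\|_V^2$ applied to $w=z/\|z\|_V$. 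Both steps are sound, the normalization $\|\nabla v\|_{L^2(B)}\le 1$ needed for the classical inequality is correctly arranged, and the final absorption of $K$ and $\tfrac{1}{2\alpha}$ into a single constant $C_{\rm TM}=\max\{K,\tfrac1{2\alpha}\}$ is legitimate because $\|z\|_V^2\ge0$. The only thing your route buys beyond the paper's citation is independence from \cite{NSY97}; conversely, the citation spares the reader the extension-operator bookkeeping. Either is acceptable here, since the lemma is used only qualitatively.
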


\begin{lemma}\label{esLepH2} \mbox{{\rm\cite[Corollary A.1]{MZ05} }}
Let $\kappa>0$, $\Omega\subset \mathbb{R}^n$ ($n=2,3$) be a bounded domain with smooth boundary $\Gamma$. Consider the following linear elliptic problem
\begin{equation}
	\left\{
	\begin{aligned}
	&-\Delta \phi=h_1,& \text{ a.e.\ in}\ \Omega,\\
	&\phi|_\Gamma=\psi,& \text{ a.e.\ on}\ \Gamma,\\
	&-\kappa\Delta_\Gamma \psi+\psi+\partial_{\boldsymbol{\nu}}\phi=h_2, & \text{ a.e.\ on}\ \Gamma,\\
	\end{aligned}
	\right.
\label{Lep}
\end{equation}
where $(h_1, h_2)\in H^s(\Omega)\times H^s(\Gamma)$ for any $s\geq 0$ and $s+ 1/2\notin \mathbb{N}$.
Then every solution $(\phi, \psi)$ to problem \eqref{Lep} satisfies the following estimate
\begin{align}
\|\phi\|_{H^{s+2}(\Omega)}+\|\psi\|_{H^{s+2}(\Gamma)}\leq C 
 \bigl( 
\|h_1\|_{H^s(\Omega)}+\|h_2\|_{H^s(\Gamma)}
 \bigr),
\label{esLeph2}
\end{align}
for some constant $C>0$ that may depend on $\kappa$, $s$, $\Omega$ and $\Gamma$, but is independent of  $(\phi, \psi)$.
\end{lemma}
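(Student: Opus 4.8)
The plan is to establish a weak solution to \eqref{Lep} by the Lax--Milgram theorem and then bootstrap its regularity on the Sobolev scale. Introduce the space $\mathbb{H}^1:=\{(\phi,\psi)\in H^1(\Omega)\times H^1(\Gamma):\ \phi|_\Gamma=\psi\}$ together with the bilinear form $\mathfrak{a}\big((\phi,\psi),(v,w)\big):=\int_\Omega\nabla\phi\cdot\nabla v\,dx+\kappa\int_\Gamma\nabla_\Gamma\psi\cdot\nabla_\Gamma w\,d\Gamma+\int_\Gamma\psi w\,d\Gamma$. Continuity is obvious, and coercivity on $\mathbb{H}^1$ follows from the zeroth-order boundary term $\int_\Gamma\psi^2$ and a trace-Poincar\'e inequality $\|\phi\|_{L^2(\Omega)}\le C\big(\|\nabla\phi\|_{L^2(\Omega)}+\|\phi|_\Gamma\|_{L^2(\Gamma)}\big)$, where $\kappa>0$ is precisely what upgrades the control to the full $H^1(\Gamma)$-norm of $\psi$. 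Hence for $(h_1,h_2)\in H\times H_\Gamma$ there is a unique weak solution $(\phi,\psi)\in\mathbb{H}^1$ with $\|\phi\|_{H^1(\Omega)}+\|\psi\|_{H^1(\Gamma)}\le C\big(\|h_1\|_{H}+\|h_2\|_{H_\Gamma}\big)$, which is the base case of \eqref{esLeph2} one order below the target.

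To upgrade the regularity I would reduce the coupled bulk--surface problem to a single elliptic equation on the closed manifold $\Gamma$. Split $\phi=\phi_0+\mathcal{E}\psi$, where $\phi_0$ solves the homogeneous Dirichlet problem $-\Delta\phi_0=h_1$ in $\Omega$, $\phi_0|_\Gamma=0$, and $\mathcal{E}\psi$ is the harmonic extension of $\psi$. Standard elliptic regularity for the Dirichlet Laplacian gives $\phi_0\in H^{s+2}(\Omega)$ with the corresponding estimate, hence $\partial_{\boldsymbol{\nu}}\phi_0\in H^{s+1/2}(\Gamma)$; and $\partial_{\boldsymbol{\nu}}(\mathcal{E}\psi)=\mathcal{N}\psi$, where $\mathcal{N}$ is the Dirichlet-to-Neumann operator of $\Omega$, a classical first-order, self-adjoint, elliptic pseudodifferential operator on $\Gamma$ mapping $H^{r}(\Gamma)\to H^{r-1}(\Gamma)$ continuously for every $r$. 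Inserting this into the third line of \eqref{Lep} turns the boundary condition into the elliptic equation $(-\kappa\Delta_\Gamma+\mathcal{N}+\mathrm{Id})\psi=h_2-\partial_{\boldsymbol{\nu}}\phi_0=:g$ on $\Gamma$. Since $\kappa>0$, the operator $-\kappa\Delta_\Gamma+\mathcal{N}+\mathrm{Id}$ is elliptic of order two on the compact boundaryless manifold $\Gamma$, and it is invertible because its quadratic form is positive (the same coercivity as above). Elliptic regularity on $\Gamma$ then yields $\|\psi\|_{H^{s+2}(\Gamma)}\le C\|g\|_{H^s(\Gamma)}\le C\big(\|h_1\|_{H^s(\Omega)}+\|h_2\|_{H^s(\Gamma)}\big)$, using $g\in H^s(\Gamma)$ since $h_2\in H^s(\Gamma)$ and $\partial_{\boldsymbol{\nu}}\phi_0\in H^{s+1/2}(\Gamma)$. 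Finally, with $\psi\in H^{s+2}(\Gamma)$ the harmonic extension satisfies $\mathcal{E}\psi\in H^{s+5/2}(\Omega)\subset H^{s+2}(\Omega)$, so $\phi=\phi_0+\mathcal{E}\psi\in H^{s+2}(\Omega)$; collecting the inequalities gives \eqref{esLeph2}. The hypothesis $s+\tfrac12\notin\mathbb{N}$ is used only to avoid the exceptional Sobolev exponents in the trace and extension theorems invoked above.

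The main obstacle, and the point requiring care, is the genuine coupling between the bulk and surface equations: at the level of the weak solution the two unknowns are intertwined through $\phi|_\Gamma=\psi$, so one cannot bootstrap $\phi$ and $\psi$ separately, and the Dirichlet-to-Neumann reduction is exactly the device that decouples them. That reduction relies on the standard but nontrivial facts that $\mathcal{N}$ is a bona fide elliptic operator of order one with the advertised mapping properties on the full Sobolev scale. A more self-contained alternative, avoiding pseudodifferential calculus, is to flatten $\Gamma$ in boundary charts and run a tangential difference-quotient argument directly on the coupled system, exactly in the spirit of the proof of Lemma~\ref{impesut}, then recover the normal derivatives from $-\Delta\phi=h_1$ and $-\kappa\Delta_\Gamma\psi+\psi=h_2-\partial_{\boldsymbol{\nu}}\phi$; either route suffices, and no idea beyond classical elliptic theory is needed, which is why the statement is simply quoted here from \cite{MZ05}.
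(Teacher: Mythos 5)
Your proposal is correct, but there is nothing in the paper to compare it against: the authors state this lemma as a quoted result from \cite[Corollary A.1]{MZ05} and give no proof at all. Your argument is a legitimate self-contained derivation. The two-step structure — Lax--Milgram on the coupled space $\{(\phi,\psi):\phi|_\Gamma=\psi\}$ with coercivity supplied by the zeroth-order boundary term, a trace-Poincar\'e inequality, and $\kappa>0$; then the splitting $\phi=\phi_0+\mathcal{E}\psi$ and the Dirichlet-to-Neumann reduction $(-\kappa\Delta_\Gamma+\mathcal{N}+\mathrm{Id})\psi=h_2-\partial_{\boldsymbol{\nu}}\phi_0$ — is sound, and the key points all check out: the principal symbol of the reduced operator is $\kappa|\xi|^2$, so it is elliptic of order two on the closed manifold $\Gamma$ regardless of the first-order perturbation $\mathcal{N}$; its invertibility follows from $\langle\mathcal{N}\psi,\psi\rangle=\int_\Omega|\nabla\mathcal{E}\psi|^2\,dx\ge 0$ together with the $+\mathrm{Id}$ and $-\kappa\Delta_\Gamma$ terms; and the estimate closes because $\partial_{\boldsymbol{\nu}}\phi_0\in H^{s+1/2}(\Gamma)\hookrightarrow H^s(\Gamma)$ is controlled by $\|h_1\|_{H^s(\Omega)}$ alone. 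Two small points deserve one line each if you write this up: first, before invoking elliptic regularity you should verify that the reduced equation on $\Gamma$ actually holds in $H^{-1}(\Gamma)$ for the weak solution, which follows by integrating $-\Delta\phi=h_1$ against test pairs $(v,w)$ with $v|_\Gamma=w$ and reading off the distributional normal derivative; second, since the lemma asserts the bound for \emph{every} solution, you should note that coercivity gives uniqueness, so any solution coincides with the Lax--Milgram one and inherits the estimate. Your uniform tangential difference-quotient alternative is also viable and is closer in spirit to what \cite{MZ05} actually do.
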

\medskip

\section*{Acknowledgement}
T. Fukao acknowledges
the support from the JSPS KAKENHI Grant-in-Aid for Scientific Research(C), Japan
Grant Number 17K05321. H. Wu was partially supported by NNSFC Grant No. 11631011 and the Shanghai Center for Mathematical Sciences at Fudan University.
\smallskip


\begin{thebibliography}{99}

\bibitem{AW07}
	H.\ {A}bels and M.\ {W}ilke,
	\newblock Convergence to equilibrium for the {C}ahn--{H}illiard equation with a logarithmic free energy,
	\newblock Nonlinear Anal., {\bf 67} (2007), 3176--3193.
	
\bibitem{AF03}
	A.\ {A}dams and J.\ F.\ {F}ournier,
	\newblock {\it {S}obolev Spaces, second edition},
	\newblock Academic Press, New York, 2003.
	
\bibitem{ASS18}
	G.\ {A}kagi, A.\ {S}egatti, and G.\ {S}chimperna,
	\newblock Convergence of solutions for the fractional {C}ahn--{H}illiard system,
	\newblock J.\ Funct.\ Anal., \textbf{276} (2019), 2663--2715.

\bibitem{CH58}
	J.\ W.\ {C}ahn and J.\ E.\ {H}illiard,
	\newblock Free energy of a nonuniform system I. Interfacial free energy,
	\newblock J.\ Chem.\ Phys., {\bf 2} (1958), 258--267.

\bibitem{CC13}
	L.\ {C}alatroni and P.\ {C}olli,
	\newblock Global solution to the {A}llen--{C}ahn equation with singular potentials and dynamic boundary conditions, 
	\newblock Nonlinear Anal., {\bf 79} (2013), 12--27.

\bibitem{CGM13}
	L.\ {C}herfils, S.\ {G}atti, and A.\ {M}iranville,
	\newblock A variational approach to a {C}ahn--{H}illiard model in a domain
	with nonpermeable walls,
	\newblock J.\ Math.\ Sci.\ (N.Y.), {\bf 189} (2013), 604--636.

\bibitem{CP14}
	L.\ {C}herfils and M.\ {P}etcu,
	\newblock A numerical analysis of the {C}ahn--{H}illiard equation with non-permeable walls,
	\newblock Numer.\ Math., {\bf 128} (2014), 518--549.

\bibitem{CFP06}
	R.\ {C}hill, E.\ {F}a\v sangov\'a, and J.\ {P}r\"uss,
	\newblock Convergence to steady states of solutions of the {C}ahn--{H}illiard and {C}aginalp equations with dynamic boundary conditions,
	\newblock Math.\ Nachr., {\bf 279} (2006), 1448--1462.

\bibitem{CF15}
	P.\ {C}olli and T.\ {F}ukao,
	\newblock Equation and dynamic boundary condition of {C}ahn--{H}illiard type with singular potentials,
	\newblock Nonlinear Anal., {\bf 127} (2015), 413--433.
	
\bibitem{CGS14}
	P.\ {C}olli, G.\ {G}ilardi, and J.\ {S}prekels, 
	\newblock On the {C}ahn--{H}illiard equation with dynamic boundary conditions and a dominating boundary potential, 
	\newblock J.\ Math.\ Anal.\ Appl., \textbf{419} (2014), 972--994.
	
\bibitem{CGSa18}
	P.\ {C}olli, G.\ {G}ilardi, and J.\ {S}prekels,
	\newblock On a {C}ahn--{H}illiard system with convection and dynamic boundary conditions,
	\newblock Ann.\ Mat.\ Pura Appl., \textbf{197}(4) (2018), 1445--1475.
	
\bibitem{CGS18}
	P.\ {C}olli, G.\ {G}ilardi, and J.\ {S}prekels,
	On the longtime behavior of a viscous {C}ahn--{H}illiard system with convection and dynamic boundary conditions,
	\newblock J.\ Elliptic Parabol.\ Equ., {\bf 4} (2018), 327--347.

\bibitem{CV90}
	P.\ {C}olli and A.\ {V}isintin,
	\newblock On a class of doubly nonlinear evolution equations,
	\newblock Comm.\ Partial Differential Equations, {\bf 15} (1990), 737--756.

\bibitem{EZ86}
	C.\ M.\ {E}lliott and S.\ {Z}heng,
	\newblock On the {C}ahn--{H}illiard equation,
	\newblock Arch.\ Rational Mech.\ Anal., {\bf 96} (1986), 339--357.
	
\bibitem{Fis97}
	H.\ P.\ {F}ischer, P.\ {M}aass, and W.\ {D}ieterich, 
	\newblock Novel surface modes in spinodal decomposition,
	\newblock Phys.\ Rev.\ Lett., \textbf{79} (1997), 893--896.

\bibitem{FY17}
	T.\ {F}ukao and N.\ {Y}amazaki,
	\newblock A boundary control problem for the equation and dynamic boundary condition of {C}ahn--{H}illiard type,
	\newblock pp.255--280 in:{\it ``Solvability, Regularity, Optimal Control of Boundary Value Problems for PDEs''},
	Springer INdAM Series, Vol.22, Springer, Cham, 2017.

\bibitem{FYW17}
	T.\ {F}ukao, S.\ {Y}oshikawa, and S.\ {W}ada,
	\newblock Structure-preserving finite difference schemes for the {C}ahn--{H}illiard equation with dynamic boundary conditions
	in the one-dimensional case,
	\newblock Commun.\ Pure Appl.\ Anal., {\bf 16} (2017), 1--20.

\bibitem{Gal06}
	C.\ {G}al,
	\newblock A {C}ahn--{H}illiard model in bounded domains with permeable walls,
	\newblock Math.\ Methods Appl.\ Sci., {\bf 29} (2006), 2009--2036.
	
\bibitem{Gal06b}
	C.\ {G}al, 
	\newblock Exponential attractors for a {C}ahn--{H}illiard model in bounded domains with permeable walls, 
	\newblock Electronic J.\ Differential Equations, {\bf 143} (2006), 1--23.
	
\bibitem{Gal08}
	C.\ {G}al, 
	\newblock Robust family of exponential attractors for a conserved {C}ahn--{H}illiard model with singularly perturbed boundary conditions, 
	\newblock Commun.\ Pure Appl.\ Anal., {\bf 7} (2008), 819--836.

\bibitem{GW08}
	C.\ {G}al and H.\ {W}u,
	\newblock Asymptotic behavior of a {C}ahn--{H}illiard  equation with {W}entzell boundary conditions and mass conservation,
	\newblock Discrete Contin.\ Dyn.\ Syst., {\bf 22} (2008), 1041--1063.

\bibitem{GMS09}
	G.\ {G}ilardi, A.\ {M}iranville, and G.\ {S}chimperna,
	\newblock On the {C}ahn--{H}illiard equation with irregular potentials and dynamic boundary conditions,
	\newblock Commun.\ Pure.\ Appl.\ Anal., {\bf 8} (2009), 881--912.

\bibitem{GS19}
	G.\ {G}ilardi and J.\ {S}prekels,
	Asymptotic limits and optimal control for the {C}ahn--{H}illiard system with convection and dynamic boundary conditions,
	\newblock Nonlinear Anal., {\bf 178} (2019), 1--31.

\bibitem{GGM17}
	A.\ {G}iorgini, M.\ {G}rasselli, and A.\ {M}iranville,
	\newblock The {C}ahn--{H}illiard--{O}ono equation with singular potential,
	\newblock Math.\ Models Methods Appl.\ Sci., {\bf 27} (2017), 2485--2510.

\bibitem{GMS11}
	G.\ R.\ {G}oldstein, A.\ {M}iranville, and G.\ {S}chimperna,
	\newblock A {C}ahn--{H}illiard model in a domain with non-permeable walls,
	\newblock Phys.\ D, {\bf 240} (2011), 754--766.

\bibitem{HJ99}
	A.\ {H}araux and M.\ A.\ {J}endoubi,
	\newblock Convergence of bounded weak solutions of the wave equation with dissipation and analytic nonlinearity, 
	\newblock Calc.\ Var., \textbf{9} (1999), 95--124.

\bibitem{henry}
	D.\ {H}enry, 
	\newblock {\it Geometric Theory of Semilinear Parabolic Equations}, 
	\newblock Lectures Notes in Mathematics, Vol.\ 840, Springer, Berlin, 1981.

\bibitem{Jen98}
	M.\ A.\ {J}endoubi,
	\newblock A simple unified approach to some convergence theorems of L.\ {S}imon,
	\newblock J.\ Funct.\ Anal., {\bf 153} (1998), 187--202.

\bibitem{Kaj18}
	N.\ {K}ajiwara,
	\newblock Global well-posedness for a {C}ahn--{H}illiard equation on bounded domains with permeable and non-permeable walls in maximal regularity spaces,
	\newblock Adv.\ Math.\ Sci.\ Appl., {\bf 27} (2018), 277--298.

\bibitem{KNP95}
	N.\ {K}enmochi, M.\ {N}iezg\'odka, and I.\ {P}aw\l ow,
	\newblock Subdifferential operator approach to the {C}ahn--{H}illiard equation with constraint,
	\newblock J.\ Differential Equations, {\bf 117} (1995), 320--354.
	
\bibitem{Kenz01}
	R.\ {K}enzler, F.\ {E}urich, P.\ {M}aass, B.\ {R}inn, J.\ {S}chropp, E.\ {B}ohl, and W.\ {D}ieterich, 
	\newblock Phase separation in confined geometries: solving the {C}ahn--{H}illiard equation with generic boundary conditions, 
	\newblock Comput.\ Phys.\ Commun., \textbf{133} (2001), 139--157.

\bibitem{KS00}
	D.\ {K}inderlehrer and G.\ {S}tampacchia,
	\newblock {\it An Introduction to Variational Inequalities and their Applications},
	\newblock Society for Industrial and Applied Mathematics, Philadelphia, 2000.

\bibitem{Kub12}
	M.\ {K}ubo,
	\newblock The {C}ahn--{H}illiard equation with time-dependent constraint,
	\newblock Nonlinear Anal., {\bf 75} (2012), 5672--5685.
	
\bibitem{LamWu}
   K.-F.\ {L}am and H.\ {W}u, 
   \newblock Convergence to equilibrium for a bulk-surface {A}llen--{C}ahn system coupled through a nonlinear Robin boundary condition, 
   \newblock Discrete Contin. Dyn. Syst., to appear, 2019.

\bibitem{Lio68}
	J.\ L.\ {L}ions,
	\newblock {\it Quelques M\'ethodes de R\'esolution des Probl\`emes aux Limites Non Lin\'eaires},
	\newblock \'Etudes Math\'ematiques, Dunod Gauthier-Villas, Paris, 1968.
	
\bibitem{LW19}
	C.\ {L}iu and H.\ {W}u, 
	\newblock An energetic variational approach for the Cahn--Hilliard equation with dynamic boundary condition: model derivation and mathematical analysis, 
	\newblock Arch. Rational Mech. Anal., {\bf 233}(1) (2019), 167--247.

\bibitem{Mil17}
	A.\ {M}iranville,
	\newblock The {C}ahn--{H}illiard equation and some of its variants,
	\newblock AIMS Mathematics, {\bf 2} (2017), 479--544.

\bibitem{MZ04}
	A.\ {M}iranville and S.\ {Z}elik,
	\newblock Robust exponential attractors for {C}ahn--{H}illiard type equations with singular potentials,
	\newblock Math.\ Methods Appl.\ Sci., {\bf 27} (2004), 545--582.
	
\bibitem{MZ05}	
	A.\ {M}iranville and S.\ {Z}elik,
	\newblock Exponential attractors for the Cahn--Hilliard equation with dynamical boundary conditions, 
	\newblock Math. Methods Appl. Sci., {\bf 28} (2005), 709--735.
   
\bibitem{MZ10}
	A.\ {M}iranville and S.\ {Z}elik, 
	\newblock The Cahn--Hilliard equation with singular potentials and
dynamic boundary conditions, 
   \newblock Discrete Contin. Dyn. Syst., \textbf{28} (2010), 275--310. 
   
\bibitem{Mot18}
	T.\ {M}otoda,
	\newblock Time periodic solutions of {C}ahn--{H}illiard systems with dynamic boundary conditions,
	\newblock AIMS Mathematics., {\bf 3} (2018), 263--287.

\bibitem{NSY97}
	T.\ {N}agai, T.\ {S}enba, and K.\ {Y}oshida,
	\newblock Application of the {T}rudinger--{M}oser inequality to a parabolic system of chemotaxis,
	\newblock Funkcial.\ Ekvac., {\bf 40} (1997), 411--433.
	
\bibitem{Nov}
	A.\ {N}ovick-Cohen, 
	\newblock The {C}ahn--{H}illiard equation, 
	\newblock pp.201--228 in:{\it  ``Handbook of Differential Equations: Evolutionary Equation''}, 
	Vol.4, Elsevier, Amsterdam, 2008.

\bibitem{PW06}
	J.\ {P}r\"uss and M.\ Wilke,
	\newblock Maximal $L_p$-regularity and long-time behaviour of the non-isothermal {C}ahn--{H}illiard equation with dynamic boundary conditions,
	\newblock pp.209--236 in:{\it  ``Partial Differential Equations and Functional Analysis''},
	Oper.\ Theory Adv.\ Appl., Vol.168, Birkh\"auser, Basel, 2006.

\bibitem{RH99}
	P.\ {R}ybka and K. H.\ {H}offmann
	\newblock Convergence of solutions to {C}ahn--{H}illiard equation,
	\newblock Comm.\ Partial Differential Equations, {\bf 24} (1999), 1055--1077.
	
\bibitem{Sho97}
	R.\ E.\ {S}howalter,
	\newblock {\it Monotone Operators in {B}anach Space and Nonlinear Partial Differential Equations},
	\newblock American Mathematical Society, Providence, 1997.

\bibitem{SW10}
	J.\ {S}prekels and H.\ Wu,
	\newblock A note on parabolic equation with nonlinear dynamical boundary condition,
	\newblock Nonlinear Anal., {\bf 72} (2010), 3028--3048.

\bibitem{Tem97}
	R.\ {T}emam,
	\newblock {\it Infinite-dimensional Dynamical Systems in Mechanics and Physics, second edition},
	\newblock Springer-Verlag, New York, 1997.

\bibitem{Wu07}
	H.\ {W}u,
	\newblock Convergence to equilibrium for a {C}ahn--{H}illiard model with {W}entzell boundary condition,
	\newblock Asymptot.\ Anal., {\bf 54} (2007), 71--92.

\bibitem{Wu07b}
	H.\ {W}u,
	\newblock Convergence to equilibrium for the semilinear parabolic equation with boundary condition,
	\newblock Adv.\ Math.\ Sci.\ Appl., {\bf 17} (2007), 67--88.

\bibitem{WZ04}
	H.\ {W}u and S.\ {Z}heng,
	\newblock Convergence to equilibrium for the {C}ahn--{H}illiard equation with dynamic boundary conditions,
	\newblock J.\ Differential Equations, {\bf 204} (2004) 511--531.


\end{thebibliography}
\end{document}